\documentclass[11pt,letterpaper]{amsart}

\usepackage{amssymb}
\usepackage{amsmath}
\usepackage{amsthm}
\usepackage{enumerate}
\usepackage{tikz}
\usepackage{wrapfig, framed, caption}
\usepackage{float}
\usetikzlibrary{arrows}
\usepackage[font=small,labelfont=bf]{caption}
\usepackage{xcolor}
\usepackage{mathtools}
\usepackage[colorlinks=true, linkcolor=blue, citecolor=blue,
pagebackref=true]{hyperref}
\usepackage{fouriernc}
\usepackage{ulem}
\usepackage{setspace}

\newtheorem{theorem}{Theorem}[section]
\newtheorem*{theorem*}{Theorem}
\newtheorem{corollary}{Corollary}[section]
\newtheorem*{corollary*}{Corollary}

\newtheorem{lemma}{Lemma}[section]
\newtheorem*{lemma*}{Lemma}

\newtheorem*{claim*}{Claim}

\theoremstyle{definition}
\newtheorem{definition}{Definition}
\theoremstyle{remark}
\newtheorem{remark}{Remark}
\newtheorem*{remark*}{Remark}

\newtheorem{ques}{Question}

\renewcommand{\Bbb}[1]{\mathbb{#1}}
\newcommand{\N}{{\Bbb N}}         % natural numbers

\newcommand{\Q}{{\Bbb Q}}         % rational numbers
\newcommand{\R}{{\Bbb R}}        % real numbers
\newcommand{\Z}{{\Bbb Z}}         % integer numbers

\newcommand{\cH}{{\mathcal H}}
\newcommand{\cI}{{\mathcal I}}

\newcommand{\cK}{{\mathcal K}}
\newcommand{\cL}{{\mathcal L}}

\newcommand{\cP}{{\mathcal P}}
\newcommand{\cQ}{{\mathcal Q}}

\newcommand{\bp}{\mathbf{p}}

\newcommand{\br}{\mathbf{r}}
\newcommand{\bv}{\mathbf{v}}

\newcommand{\bs}{\mathbf{s}}
\newcommand{\bn}{\mathbf{n}}

\newcommand{\Bad}{\mathbf{Bad}}

\DeclareMathOperator{\dimh}{\dim_H}
\DeclareMathOperator{\dimb}{\dim_B}
\DeclareMathOperator{\dimub}{\overline{\dim}_B}

\DeclareMathOperator{\dimlb}{\underline{\dim}_{B}}

\DeclareMathOperator{\dimp}{\dim_P}

\newcommand{\bx}{\mathbf{x}}
\newcommand{\by}{\mathbf{y}}
\newcommand{\bz}{\mathbf{z}}

\newcommand{\bu}{\mathbf{u}}

\newcommand{\bX}{\mathbf{X}}
\newcommand{\bq}{\mathbf{q}}

\title[The $d$-dimensional Lagrange spectrum]{Distribution of points near the origin in the $d$-dimensional Lagrange spectrum}

\author[Benjamin Ward]{ Benjamin Ward}
\address{Benjamin Ward,  {Department of Mathematics, University of York, Deramore Lane, York, YO10 5GH, UK.} }
\email{benjamin.ward@york.ac.uk, ward.ben1994@gmail.com }
\begin{document}

    \begin{abstract}
        We develop a new framework, inspired by Schmidt's games, to study the Lagrange spectrum for simultaneous Diophantine approximation in dimension $d\geq 2$. We show the Hausdorff dimension of the set of points in $\mathbb{R}^{d}$ whose best approximation constant lies in $[\varepsilon,\varepsilon(1+\delta\varepsilon^{d})]$, for some constant $\delta>0$, is positive, and for a slightly larger set approaches full dimension as $\varepsilon \to 0$. The proof combines a novel application of the Simplex lemma near rational points with the game-theoretic framework. We also use an elementary observation to show that the naturally defined Lagrange spectrum for systems of linear forms is uncountable in the case of square matrices.
    \end{abstract}
\maketitle
    {\small \textbf{Keywords:} Markoff-Lagrange spectrum, Simultaneous Diophantine approximation, Fractal Geometry, Schmidt's games.}\\
    {\small \textbf{MSC(2020)}: 11J06, 11J13, 11J83}
%%%%%%%%%%%%%%%%%%%%%%%%%%%%%%%%%
%
%
%   INTRO AND BACKGROUND
%
%
%%%%%%%%%%%%%%%%%%%%%%%%%%%%%%%%%
\section{Introduction}
Fix $d\in \N$ and let $\|\cdot\|$ be an arbitrary norm on $\R^{d}$. It is a well-known result in Diophantine approximation that there exists constant $D_{d,\|\cdot\|}>0$ depending on $d$ and $\|\cdot\|$ such that for every $\bx\in\R^{d}$
\begin{equation*}
    \left\| \bx -\tfrac{\bp}{q}\right\|< D_{d,\|\cdot\|}q^{-1-\tfrac{1}{d}}
\end{equation*}
has infinitely many solutions $(\bp,q)\in \Z^{d}\times \N$. That is to say, for every $\bx$ there exists constant
\begin{equation} \label{eq: lagrange bounds}
    0 \leq \Theta_{d}(\bx, \|\cdot\|):=\liminf_{q\to\infty}\left(q^{\tfrac{1}{d}}\min_{\bp\in\Z^{d}}\|q\bx-\bp\|\right)\leq D_{d, \|\cdot\|}\, ,
\end{equation}
which is often called the \textit{best approximation constant of $\bx$}. A substantially more difficult problem is to determine the optimal constant $D_{opt}(d,\|\cdot\|)$ for which \eqref{eq: lagrange bounds} holds. In dimension $d=1$ one can take $D_{opt}(1)=\tfrac{1}{\sqrt{5}}$ as shown by Hurwitz. In higher dimensions, the optimal value is unknown except for the case $d=2$ and $\|\cdot\|_{2}$ the Euclidean norm, where it was proven that $D_{opt}(2,\|\cdot\|_{2})=\sqrt{\frac{2}{\sqrt{23}}}$ \cite{DavenportMahler46}. See \cite{Nowak16} for a recent survey and known bounds on $D_{opt}(d,\|\cdot\|)$. 
In the other direction, Khintchine's Theorem \cite{Khintchine1949} implies that for Lebesgue almost every $\bx\in \R^{d}$ we have $\Theta_{d}(\bx,\|\cdot\|)=0$. 

Studying the set of possible values of $\Theta_{d}(\bx,\|\cdot\|)$ taken over $\bx\in \R^{d}$ defines the \textit{$d$-dimensional Lagrange Spectrum with respect to $\|\cdot\|$},
\begin{equation*}
    \cL_{d}(\|\cdot\|):=\left\{\Theta_{d}(\bx,\|\cdot\|):  \bx \in \R^{d}\right\}\footnote{Note that in the one-dimensional case authors often define the Lagrange spectrum as $\left\{ \Theta_{1}(x)^{-1}: x\in [0,1) \right\}$. We follow the notation as presented in \cite{AkhundzhanovMoshchevitin2006,Akhunzhanov2013}.}
    \subseteq \left[0,D_{d,\|\cdot\|}\right]
\end{equation*}
as introduced in \cite{AkhundzhanovMoshchevitin2006}. For each $\varepsilon >0$ we consider the corresponding level set
\begin{equation*}
    L_{d}(\varepsilon,\|\cdot\|):=\left\{ \bx \in \R^{d}: \Theta_{d}(\bx, \|\cdot\|)=\varepsilon \right\}.
\end{equation*}
Clearly $L_{d}(\varepsilon,\|\cdot\|)\neq \emptyset$ if and only if $\varepsilon \in \cL_{d}(\|\cdot\|)$.

The one-dimensional Lagrange spectrum, $\cL_{1}$, is remarkably well understood. The following structural properties are known:
\begin{itemize}
    \item $\mathcal{L}_{1}\cap \left(\tfrac{1}{3},1\right]$ is a countable discrete collection of points accumulating at $\frac{1}{3}$, \cite{Markoff}. Namely,
    \begin{equation*}
        \left\{ \tfrac{1}{\sqrt{5}}>\tfrac{1}{\sqrt{8}}>\dots > \tfrac{1}{\sqrt{9-\tfrac{4}{m_{n}^{2}}}} >\dots \, \,   :\,\,   m_{n} \, \text{ the $n$th Markoff number }\right\}.
    \end{equation*}
    Moreover, for every $\varepsilon \in \mathcal{L}_{1}\cap \left(\tfrac{1}{3},1\right]$ the set $L_{1}(\varepsilon)$ is countable.
    \item $\mathcal{L}_{1}\cap [0, c_{F}]$ is an interval called Hall's ray \cite{Hall1947}. The optimality of the constant
    \begin{equation*}
        c_{F}=\tfrac{491993569}{2221564096+283798\sqrt{462}}\approx 0.220856\ldots
    \end{equation*}
    was calculated by Freimann in \cite{Freiman1975}.

    \item $\mathcal{L}_{1}\cap \left(\tfrac{1}{3}-\varepsilon,\tfrac{1}{3}\right)$ has positive Box and Hausdorff dimension for any $\varepsilon>0$, as shown by Moreira \cite{Moreira2018}, see also \cite{EraGutMorRom2024}. Moreover, it was shown that $\cL_{1}\cap \left( c_{F}, \tfrac{1}{\sqrt{12}}\right)$ has full Hausdorff dimension.
    \item Of particular relevance to the present paper, it was shown in \cite[Theorem 2]{Moreira2018} that the level sets tend to full dimension. That is,
    \begin{equation*}
        \lim_{\varepsilon\to 0} \dimh L_{1}(\varepsilon)=1.
    \end{equation*}
    \item For any $\varepsilon>0$ in the interior of $\cL_{1}$ the dimension of $L_{1}(\varepsilon)$ is strictly increasing \cite{MoreriaVillamil25}.
\end{itemize}
For many other results and properties on the one-dimensional Lagrange spectrum see the surveys \cite{CusickFlahive, Matheus2018}. In all of the one dimensional results mentioned above the theory of continued fractions is essential, see for example \cite{Khintchine1949} for the definition and range of properties they possess. In particular, Perron's formula (see \cite{Perron21} or \cite[Section 1.4]{Matheus2018}) gives an equivalent definition of the Lagrange spectrum in terms of the continued fraction expansion of a point. The formula motivates a dynamical perspective on the Lagrange spectrum which can be generalised to describe a Lagrange spectrum for Diophantine approximation by Fuchsian groups as initiated in \cite{Lehner1952}. See for example \cite{Patterson1976, MLChallray2020, Moreira2023}
%\cite{HaasSeries1986,saar2002,Kim2022}
for the Markoff-Lagrange spectrum within this framework.\par

\subsection{The $d$-dimensional Lagrange spectrum}
Very little is known about the higher dimensional Lagrange spectrum $\cL_{d}(\|\cdot\|)$. For instance, as mentioned earlier, in the majority of cases we do not know the rightmost endpoint of $\cL_{d}(\|\cdot\|)$. The following questions are challenging open problems:
\begin{ques}\footnote{This question should be attributed to Nikolay Moshchevitin. Following an `Online Seminar in Diophantine Approximation and Related Topics' in 2021 Nikolay introduced me to the papers \cite{AkhundzhanovMoshchevitin2006, Akhunzhanov2013} and asked the above question.}
Is the set $\cL_{d}(\|\cdot\|)$ uncountable? Moreover, does it contain an interval or have positive Hausdorff dimension?
\end{ques}

\begin{ques}
    Given $\varepsilon\in \cL_{d}(\|\cdot\|)$, what can be said about $\dimh L_{d}(\varepsilon, \|\cdot\|)$?
\end{ques}
In \cite{Akhunzhanov2013}, see also \cite{AkhundzhanovMoshchevitin2006}, it was proven that for constant $C>0$ sufficiently small and $\delta>0$, for every $C>\varepsilon>0$ the closed interval 
\begin{equation*}
    \cL_{d}(\|\cdot\|) \cap [\varepsilon,\varepsilon(1+\delta\varepsilon^{1+1/d})]    
\end{equation*}
is non-empty. It was also shown that
\begin{equation*}
    \left\{ \bx \in \R^{d}: \Theta_{d}(\bx, \|\cdot\|)\in [\varepsilon,\varepsilon(1+\delta\varepsilon^{1+1/d})] \right\}
\end{equation*}
is uncountable.

In a very recent breakthrough, Kleinbock \cite{Kleinbock} improved upon the former statement, showing that $$\cL_{d}(\|\cdot\|) \quad \text{is dense on the interval} \quad \left[0,\sup_{\bx \in \R^{d}}\Theta_{d}(\bx, \|\cdot\|)\right].$$ Moreover, it was shown that for any interval $[a,b]\subset[0,\sup_{\bx \in \R^{d}}\Theta_{d}(\bx, \|\cdot\|)]$, the set
\begin{equation*}
        \left\{\bx \in \R^{d}: \Theta_{d}(\bx, \|\cdot\|)\in [a,b]\right\}
\end{equation*}
is dense in $\R^{d}$. See \cite[Theorem 1.1]{Kleinbock}.

Our main theorem gives quantitative information of a different nature: it shows that arbitrarily thin windows near the origin are realised by sets of positive Hausdorff dimension.

\begin{theorem}\label{main}
    Take $C>0$ sufficiently small and $\|\cdot\|_{2}$ the Euclidean norm. There exists $\delta>0$ such that for every $C>\varepsilon>0$ the closed interval 
    \begin{equation*}
    [\varepsilon,\varepsilon(1+\delta\varepsilon^{d})]\cap \cL_{d}(\|\cdot\|_{2})    
    \end{equation*}
    is non-empty. 
    % In particular, for $d\geq 2$ we have
    % \begin{equation} \label{metric statement}
    %     \dimlb \cL_{d}(\|\cdot\|_{2}) \geq 1-\tfrac{1}{d+1}\, ,
    % \end{equation}
    % where $\dimlb$ denotes the lower box dimension. 
    Furthermore, 
    \begin{equation*}
        \dimh \left\{ \bx \in \R^{d}: \varepsilon \leq \Theta_{d}(\bx, \|\cdot\|_{2}) \leq \varepsilon(1+\delta \varepsilon^{d})\right\}>0,
    \end{equation*}
    and for any function $\Delta:\R_{+}\to\R_{+}$, with $\Delta(r)\to \infty$ as $r\to 0$ we have that
    \begin{equation*}
    \lim_{\varepsilon\to0}\, \dimh \left\{ \bx \in \R^{d}: \varepsilon \leq \Theta_{d}(\bx, \|\cdot\|_{2}) \leq \varepsilon(1+\Delta(\varepsilon) \varepsilon^{d})\right\}=d\, . 
\end{equation*}

\end{theorem}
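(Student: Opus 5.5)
The plan is to build a Cantor-type set $K_\varepsilon\subset\R^{d}$ by a game-like nested construction that enforces two competing requirements at every scale. The lower bound $\Theta_{d}(\bx)\geq\varepsilon$ asks that $\bx$ avoid the neighbourhoods $B(\bp/q,\varepsilon q^{-1-1/d})$ of all rationals with large $q$. The upper bound $\Theta_{d}(\bx)\leq\varepsilon(1+\delta\varepsilon^{d})$ asks that $\bx$ fall, infinitely often, inside the slightly larger neighbourhoods $B(\bp/q,\varepsilon(1+\delta\varepsilon^{d})q^{-1-1/d})$. The idea is to make $\bx$ come within distance between $\varepsilon q_k^{-1-1/d}$ and $\varepsilon(1+\delta\varepsilon^{d})q_k^{-1-1/d}$ of a sequence of rationals $\bp_k/q_k$ with $q_k\to\infty$, while avoiding the $\varepsilon$-neighbourhoods of every other rational.

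Concretely, I would work with balls of radii $R^{-n}$ for a large parameter $R$. At stage $n$, the rationals with $q^{1+1/d}\approx R^{n}$ are handled by the simplex lemma. Rationals with $q\leq Q$ lying in a ball of radius roughly $Q^{-1-1/d}$ must lie on a common affine hyperplane. So in each surviving ball the \emph{dangerous} regions $B(\bp/q,\varepsilon q^{-1-1/d})$ at that scale sit in a thin neighbourhood of one hyperplane. The neighbourhood has width $\asymp\varepsilon R^{-n}$, which is small relative to $R^{-n}$ when $\varepsilon$ is small.

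This lets us keep most subballs of radius $R^{-n-1}$ away from the dangers, as in the hyperplane-absolute game. The number of retained subballs is $\gtrsim R^{d}(1-c\varepsilon R)$, and the standard mass-distribution estimate then gives $\dimh K\geq d-O(\log(1-c\varepsilon R)^{-1}/\log R)$. Choosing $R\to\infty$ slowly with $\varepsilon\to0$, for example $R=\varepsilon^{-1/2}$, gives dimension tending to $d$. For fixed small $\varepsilon$ the same bound is positive.

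The second ingredient is, at a sparse set of stages $n_k$, to deliberately steer the ball toward a single rational $\bp_k/q_k$. We restrict to subballs inside the annulus $\varepsilon q_k^{-1-1/d}\le\|\bx-\bp_k/q_k\|\le\varepsilon(1+\delta\varepsilon^{d})q_k^{-1-1/d}$. This is the novel use of the simplex lemma near a rational point. We need a rational $\bp_k/q_k$ inside the current ball such that no other rational with comparable or smaller denominator has its $\varepsilon$-ball meeting a piece of this annulus of diameter $\gg\delta\varepsilon^{d+1}q_k^{-1-1/d}$.

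Near $\bp/q$, any other $\bp'/q'$ satisfies $\|\bp/q-\bp'/q'\|\geq 1/(qq')$. A simplex-lemma argument shows that the rationals with $q'\lesssim q\,\varepsilon^{-d}$ near $\bp/q$ lie on a hyperplane through $\bp/q$, or are sparse. Hence a cap of the annulus on the side away from that hyperplane is free, and we zoom into a ball of radius $\asymp\delta\varepsilon^{d}\cdot\varepsilon q_k^{-1-1/d}$ inside this cap. The factor $\varepsilon^{d}$ in the window is exactly the loss incurred here. These steering stages cost only a bounded factor in the count, so if they occur sparsely (with $n_k$ growing fast) they do not affect the Hausdorff dimension. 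For the $\Delta(\varepsilon)$ statement, the wider annulus allows a subball of radius $\Delta(\varepsilon)\varepsilon^{d+1}q^{-1-1/d}$, and the same sparsity argument applies.

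Non-emptiness of $[\varepsilon,\varepsilon(1+\delta\varepsilon^{d})]\cap\cL_{d}$ follows immediately, since $K_\varepsilon\neq\emptyset$. The liminf is at most $\varepsilon(1+\delta\varepsilon^{d})$ along the sequence $q_k$, and at least $\varepsilon$ by the avoidance steps, provided every rational with large $q$ is handled at its scale.

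I expect the main obstacle to be the steering step. It requires showing that near a suitably chosen rational $\bp_k/q_k$ in an arbitrary surviving ball, the rationals with denominators up to about $\varepsilon^{-d}q_k$ do not cover the whole thin annulus. It also requires checking that the subsequent avoidance stages are compatible with having just approached $\bp_k/q_k$, since $\bp_k/q_k$ itself must be kept outside its own $\varepsilon$-ball forever after.
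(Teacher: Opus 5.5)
\textbf{There is a genuine gap in the steering step.} Your architecture matches the paper's. Regular stages run the hyperplane-absolute strategy for $\Bad$ via the Simplex Lemma. At sparse exceptional stages the play is forced into a small ball inside an annulus around a rational, and the sparsity hides these stages from the mass distribution argument; this is exactly the $(N_i,R_i)$-rapid game together with Theorem~\ref{winning dimension}. The gap is in the steering step, which is where all the difficulty lies.

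You assert that a simplex-lemma argument puts the rationals with $q'\lesssim q\varepsilon^{-d}$ near $\bp/q$ on a hyperplane through $\bp/q$. The Simplex Lemma does not give this.
\begin{itemize}
\item A single application to a convex body containing the annulus (volume $\asymp\varepsilon^{d}q^{-(d+1)}$) only forces the rationals with $q'\lesssim\varepsilon^{-d/(d+1)}q$ onto a common hyperplane. This is the paper's $\widetilde{\cH}_i$.
\item The rationals with $\varepsilon^{-d/(d+1)}q\lesssim q'\lesssim(\delta\varepsilon^{d})^{-d/(d+1)}q$ are not controlled, and they are exactly the dangerous ones for your target ball. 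Their $\varepsilon$-balls have radii between $\delta\varepsilon^{d+1}q^{-(d+1)/d}$ and $\varepsilon^{2}q^{-(d+1)/d}$, i.e.\ up to $\varepsilon^{1-d}\delta^{-1}$ times the width of the annulus.
\item Their centres need not lie on $\widetilde{\cH}$. Nothing in your argument stops them from sitting on the sphere and covering the cap you want to zoom into.
\item Applying the Simplex Lemma to smaller pieces of the annulus gives different hyperplanes on different pieces. ``Or are sparse'' would then require controlling all these bands of denominators simultaneously, and you give no mechanism for that.
\end{itemize}
You also misplace the threat. Smaller denominators are already kept away by the regular stages, which avoid balls of radius $c_1\beta^{2}q^{-(d+1)/d}\gg\varepsilon q^{-(d+1)/d}$. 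Comparable ones on $\widetilde{\cH}_i$ are handled in the paper by targeting the outer envelope of all their $\varepsilon$-spheres rather than one sphere. So the rational finally approximated need not be the one you started from.

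\textbf{The missing idea is that the zoom cannot be done in one step.} The paper performs a multi-scale descent onto a target surface (Lemma~\ref{lem: spyglass lem}).
\begin{itemize}
\item The target $S_i$ is that envelope, taken on one side of $\widetilde{\cH}_i$ and away from it.
\item Alice effectively plays a hyperplane-absolute game on $S_i$. Balls centred on $S_i$ shrink by $\beta^{d/(d-1)}$ per step for $t_i=(\ell_i-4)(d-1)$ steps, and step $j$ removes the band $[\widetilde{Q}_{i,k_i+\ell_i+j-1},\widetilde{Q}_{i,k_i+\ell_i+j})$ of denominators.
\item If $S_i$ near the current centre is not trapped in a thin slab, this is the usual Simplex Lemma move.
\item If it is trapped in a slab around some $\cH^{*}$, the paper first shows $\cH^{*}$ is nearly tangent to the sphere (Lemma~\ref{lem: key geometric lem}). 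It then applies the Simplex Lemma to a thin cylinder joining the centre to the corresponding rational; the cylinder's volume $\asymp r^{d-1}\varepsilon q^{-(d+1)/d}$ is small enough precisely because of the rate $\beta^{d/(d-1)}$. Finally, the resulting hyperplane is transversal to $\cH^{*}$, so one can slide along $S_i$ away from it (Lemma~\ref{lem: colliding hyperplanes}).
\end{itemize}
The total shrinkage $\beta^{t_id/(d-1)}$, which is $\varepsilon^{d}$ up to powers of $\beta$, is the real source of the factor $\varepsilon^{d}$ in the window. Those powers of $\beta$ give the paper's window constant $\delta\asymp\beta^{-6d-2}$. Full dimension needs $\beta\to0$, which is why the last statement assumes $\Delta(\varepsilon)\to\infty$. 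Your sketch runs the regular game with $R=\varepsilon^{-1/2}$ at fixed $\delta$, but it never explains where the steering constants come from or how they depend on the scale ratio.
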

\begin{remark} \rm
    The recent preprint of Kleinbock \cite{Kleinbock} supersedes the non-empty statement of Theorem~\ref{main}, but not the Hausdorff dimension statements. 
    %In particular, neither of the statements of Corollary~\ref{cor: levelsets or uncountable} can be deduced from \cite{Kleinbock}. 
    In terms of \cite{Akhunzhanov2013}, their constants $C,\delta>0$ appear to be better than ours, but our windows are much thinner for small $\varepsilon>0$. For explicit value of $C, \delta$ see \eqref{eq: delta bound}, \eqref{eq: epsilon bound}. All three results use significantly different methods of proof.
\end{remark}
\begin{remark}
    Note the Hausdorff dimension of the set $$\left\{ \bx \in \R^{d}: \varepsilon \leq \Theta_{d}(\bx, \|\cdot\|_{2}) \leq \varepsilon(1+\Delta(\varepsilon) \varepsilon^{d})\right\}$$ is always strictly less than $d$, and so our asymptotic is optimal. This follows from considering the closely related set of \textit{$\varepsilon$-Badly approximable points}, $$\Bad_{d}(\varepsilon, \|\cdot\|):=\left\{ \bx\in [0,1]^{d}: \Theta_{d}(\bx, \|\cdot\|)\geq\varepsilon\right\}.$$ For bounds on the Hausdorff dimension of these sets see \cite{Jarnik1928, Kurzweil1951, Hensley1992, SWeil2015, BroderickKleinbock2015, Simmons2018, DasFishmanSimmonsUrbanski2023}. For $d\geq2$ the best known bound is due to Simmons \cite{Simmons2018}, who calculated the asymptotic
\begin{equation*}
    \dimh \Bad_{d}(\varepsilon)=d- V_{d,\|\cdot\|} \varepsilon^{d} + o(\varepsilon^{d})\, ,
\end{equation*}
where $V_{d,\|\cdot\|}$ is a constant that can be explicitly calculated.
\end{remark}
\begin{remark}
Throughout the proof we used the fact that our balls were constructed using the Euclidean norm, particularly in \S~\ref{sec: target surface} and the proof of Lemma~\ref{lem: spyglass lem}. It seems reasonable that Theorem~\ref{main} should hold for other norms (with modified parameter bounds). We should note that the results in \cite{Akhunzhanov2013, Kleinbock} hold for arbitrary norms.
\end{remark}

The proof of Theorem~\ref{main} has two main ingredients. Firstly, we give a game-theoretic framework adapted to sets of exact approximation order. Classical Schmidt and absolute winning games are suited for $\varepsilon$-badly approximable sets, but the sets in Theorem~\ref{main} require simultaneously imposing an upper bound on their best approximation constant. To accommodate these competing requirements, we introduce a rapid absolute winning game in which Alice follows a neighbourhood-avoidance strategy at most turns, but at a sequence of exceptional turns is allowed to force the play into a prescribed smaller ball. Broadly, the idea behind the game is motivated by proof techniques appearing in \cite{Bu03, BugMor2011, Moreira2018, KLWZ24, BandiDeSaxce, BakerWard} when studying sets of exact approximation order. In a recent preprint \cite{HusSim} Hussain and Simmons developed a generalisation of the \text{Rapid winning game} of \cite{HatefiSimmons2024}. In \cite[\S 6 (3)]{HusSim} they ask for a higher dimensional analogue of their game; the game we present here can be viewed as such analogue. We establish Hausdorff and packing-dimension estimates for winning sets, see Theorem~\ref{winning dimension}. We expect the framework given here to have applications beyond Theorem~\ref{main}. 

The second part is to show that the sets appearing in Theorem~\ref{main} are winning for our game. The strategy employed for standard turns in the game are essentially identical to that in \cite{BroderickKleinbock2015} for the hyperplane absolute winning game, see \cite{BroderickFishmanKleinbockReichWeiss2012}. For the exceptional turns we begin by finding a rational point $\frac{p}{q}$ that in some sense has a locally small denominator, this is obtained by Lemma~\ref{lem:eventually see a rational}. We then target a surface $S$, that can be thought of as $\partial B\left(\tfrac{p}{q}, \varepsilon q^{-1-1/d}\right)$, and descend to the surface via a novel application of the Simplex Lemma \cite[p.57]{Schmidtbook1980}, while simultaneously avoiding rational points. This is achieved in Lemma~\ref{lem: spyglass lem}.

We finish the paper by considering the Lagrange spectrum for matrices. We use an elementary observation to show that the set of best approximation constants for matrices $\bX \in \R^{n\times n}$ is uncountable and contains a Hall's Ray, answering the analogue of Question 1 in this setting. See Theorem~\ref{thm: matrices case}.

% The article is laid out as follows: We begin by describing a new version of Schmidt games that will be used to prove Theorem~\ref{main}. In particular we provide lower bounds on the Hausdorff and packing dimension of our winning sets, see Theorem~\ref{winning dimension}. This is proven in \S~\ref{winning dim proof}. We expect the framework to have applications beyond the remit of this paper. In \S~\ref{sec: proof of main} we go on to the main part of the paper which is to show that the sets given in Theorem~\ref{main} are winning within our framework. We finish with some further remarks on the Lagrange spectrum for matrices. In particular, we show that the set of best approximation constants for matrices $\bX \in \R^{n\times n}$ is uncountable and contains a Hall's Ray, thus answering the analogue of Question 1 in this setting. See Theorem~\ref{thm: matrices case} for details. 

\textbf{Acknowledgements:} The author would like to thank Nikolay Moshchevitin for introducing them to the problem studied in this paper. They would also like to thank the organisers of the Simons Semester conference on Continued Fractions, Fractals, Ergodic theory and Dynamics (Simons Foundation grant award no. SFI-MPS-T-Institutes-00010825) where this was presented, and Harold S. Erazo for pointing out \cite{MoreriaVillamil25}. The author is a Leverhulme Trust Early Career Research Fellow, ECF-2024-401.

%%%%%%%%%%%%%%%%%%%%%%%%%%%%%%%%%
%
%
%   SCHMIDT GAMES
%
%
%%%%%%%%%%%%%%%%%%%%%%%%%%%%%%%%%

\section{A general framework for the difference of two limsup sets} \label{section:Schmidt games}

The introduction of Schmidt's $(\alpha,\beta)$-games \cite{Schmidt1966} to metric number theory has resulted in the development of many groundbreaking results, see for example \cite{KleinbockWeiss2010,BroderickFishmanKleinbockReichWeiss2012,McMullen2010,BerNesYan2022}. The sets studied in this paper are not winning in the classic sense since they do not have full Hausdorff dimension. To that end we introduce a new game for which our sets are in some sense winning. For a survey of different games and the properties that winning sets (with respect to the game being played) possess see \cite{BadziahinHarrapNesharimSimmons2018}.\par 
\subsubsection{The playground}
Prior to introducing our game we provide a list of requirements needed. Note when it comes to the application of our game we will be exclusively thinking of the metric space $\R^{d}$ with norm $\|\cdot\|$ and equipped with the $d$-dimensional Lebesgue measure, but throughout this section we try to be as general as possible.\par 
Given a complete metric space $(X,d)$ equipped with a measure $\mu$ we ask for the following:
\begin{itemize}
    %\item[(COM)] $(X,d)$ is a \textit{compact} metric space.
    \item[(AHL)] $\mu$ is an \textit{Ahlfors $\delta$-regular} measure on $X$. That is, for $r_{0}>0$ sufficiently small there exist constants $0<a_{\mu}\leq1\leq b_{\mu}<\infty$ such that for all $0<r\leq r_{0}$ and $x\in X$
    \begin{equation*}
        a_{\mu}r^{\delta}\leq \mu(B(x,r))\leq b_{\mu}r^{\delta}\, .
    \end{equation*}
    \item[(DEC)] Let $\cP\subset \P(X)$ be a collection of subsets of $X$ and let $\eta, C_{dec},r_{0}>0$. A measure $\mu$ is \textit{absolutely $(\eta,\cP)$-decaying} if for every ball $B(x,r)$ with $x \in X$ and $r_{0}>r>0$, for every $P\in\cP$ and every $\varepsilon>0$ we have
\begin{equation*}
    \mu\left( B(x,r) \cap \Delta(P,\varepsilon r) \right) \leq C_{dec}\varepsilon^{\eta}\mu(B(x,r))\, ,
\end{equation*}
where for set $S\subset X$ and $r>0$ 
\begin{equation*}
    \Delta(S,r):=\left\{ x\in X: \inf_{s\in S} d(s,x) \leq r \right\}\, .
\end{equation*}
\end{itemize}
The notion of an absolutely $(\eta,\cP)$-decaying measure is well-studied, see for instance \cite{KLW04, KW05}. For example, the set of affine hyperplanes in $\R^{d}$, $\cH$, with $d$-dimensional Lebesgue measure $\lambda_{d}$ is absolutely $(1,\cH)$-decaying.\par
\subsubsection{The $(N_{i},R_{i})$-rapid $(\beta,\rho,\cP)$-absolute winning game} We introduce the following game:

\begin{definition}%$(N_{i},R_{i})$-rapid $(\beta,\rho,\cP)$-absolute winning game]
    Let $\beta,\rho>0$ and let $\cP$ be a collection of subsets of $X$. Fix a sequence of integer pairs $(N_{i},R_{i})\in \N^{2}$ with $N_{i}$ strictly increasing as $i\to\infty$. The game is as follows:
    \begin{itemize}
        \item Bob starts by playing any ball $B_{0}:=B(x_{0},\rho_{0})$ with $\rho_{0}\geq \rho$. Alice can respond by picking any $P_{0}\in \cP$ and removing the $\beta\rho_{0}$-neighbourhood. That is, Alice removes some $A_{0}=\Delta(P_{0},\beta\rho_{0})$.
        \item On Bobs second turn he can play any ball $B_{1}=B(x_{1},\rho_{1})$ such that
        \begin{equation*}
            B_{1}\subseteq B_{0}\setminus A_{0}\, , \quad \text{ and } \quad \rho_{1}= \beta \rho_{0}\, .
        \end{equation*}
        Alice can respond by picking a subset $P_{1}\in \cP$ and $A_{1}=\Delta(P_{1},\beta\rho_{1})$ which Bob cannot intersect on his next turn.
        \item The game continues in this way up to turn $N_{1}+1$. At this point Bob has just played ball $B_{N_{1}}=B(x_{N_{1}},\rho_{N_{1}})$ such that
        \begin{equation*}
            B_{N_{1}}\subset B_{N_{1}-1}\setminus A_{N_{1}-1}\, , \quad \text{ and } \quad \rho_{N_{1}}=\beta\rho_{N_{1}-1}\, .
        \end{equation*}
        On this turn Alice can respond by picking any ball $$A_{N_{1}}=B(x_{N_{1}+1},\beta^{R_{1}}\rho_{N_{1}}) \subset B_{N_{1}}\, .$$ This ball automatically becomes Bobs $(N_{1}+1)$th turn. That is, $B_{N_{1}+1}=A_{N_{1}}$. 
        \item The game then continues as usual. That is, suppose Bob has just played ball $B_{n}=B(x_{n},\rho_{n})$. If $n\not\in (N_{i})_{i\in \N}$ then Alice can respond by choosing some $P_{n}\in \cP$ and playing $A_{n}= \Delta(P_{n}, \beta\rho_{n})$ which Bob must avoid in his next turn. If $n\in (N_{i})_{i\in\N}$, say $n=N_{j}$, then Alice can respond by playing a ball $A_{n}=B(x_{n+1},\beta^{R_{j}}\rho_{n})$, which automatically becomes Bobs $(n+1)$th ball.
    \end{itemize}
    The game continues indefinitely. If Bob cannot play a legal ball we say that Alice wins by default. Otherwise, let $x_{\infty}:=\lim_{n\to\infty} B_{n}$ be the limit point of Bobs sequence of legal moves. Observe that Bobs sequence of turns are nested and the radii are decreasing, so the limit point $x_{\infty}$ exists. Call $x_{\infty}$ the outcome of the game. Let $S\subset X$. We say $S$ is a $(N_{i},R_{i})$-rapid $(\beta,\rho,\cP)$-absolute winning set if Alice has a strategy guaranteeing that $x_{\infty} \in S$.
\end{definition}
We prove the following bounds on the Packing and Hausdorff dimension of winning sets with respect to our framework.
\begin{theorem} \label{winning dimension}
Let $(X,d)$ be a complete metric space equipped with a $\delta$-Ahlfors regular measure $\mu$. Let $\cP$ be a collection of closed subsets of $X$ and suppose that $\mu$ is absolutely $(\eta,\cP)$-decaying. Suppose $S\subset X$ is $(N_{i},R_{i})$-rapid $(\beta,\rho,\cP)$-absolutely winning for some 
\begin{equation} \label{eq: geometric beta bound}
    0<\beta<\left(C_{dec}^{-1}\tfrac{a_{\mu}^{2}}{b_{\mu}^{2}}2^{-2\delta-\eta-2}\right)^{\tfrac{1}{\eta}} .
\end{equation}
and $r_{0}>\rho>0$, for $r_0>0$ as in (AHL) and (DEC). Then, for every ball $B\subset X$ with $r_{0}>r(B)\geq \rho$ we have that
\begin{align*}
    \dimh (S\cap B) \geq 
    \liminf_{n\to\infty} \left(1-\frac{\sum_{i=1}^{n}R_{i}}{N_{n}+1+\sum_{i=1}^{n}(R_{i}-1)}\right)
    \left(
    \delta-\frac{\log (c_{1}-c_{2}\beta^{\eta})}{\log \beta }
    \right)\, , \\
    \dimp (S\cap B) \geq \limsup_{n\to \infty} \left(1- \frac{\sum_{i=1}^{n-1}R_{i}}{N_{n}+\sum_{i=1}^{n-1}(R_{i}-1)} \right)\left(
    \delta-\frac{\log (c_{1}-c_{2}\beta^{\eta})}{\log \beta } \right)
\end{align*}
where $c_{1}=\tfrac{1}{2}\frac{a_{\mu}}{b_{\mu}}\left(\frac{1}{4}\right)^{\delta}$ and $c_{2}=C_{dec}\frac{b_{\mu}}{a_{\mu}}2^{\eta+1}$.

\end{theorem}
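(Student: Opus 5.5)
We construct a Cantor subset of $S\cap B$ by letting Bob play against Alice's winning strategy, and then bound its dimension with the mass distribution principle for Hausdorff dimension and the analogous density argument for packing dimension. Fix Alice's winning strategy and let Bob start with $B_0=B$ (shrunk if necessary so that $r(B_0)\ge\rho$). We build a tree of legal plays. At a standard turn $n\notin(N_i)$, Bob's current ball is $B_n=B(x_n,\rho_n)$ and Alice removes $A_n=\Delta(P_n,\beta\rho_n)$. We then pick many pairwise disjoint (indeed well-separated) balls of radius $\beta\rho_n$ inside $B_n\setminus A_n$; each one gives a different continuation of the game. At an exceptional turn $n=N_j$ there is no choice: the single child is Alice's ball of radius $\beta^{R_j}\rho_n$. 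Every infinite branch is a legal play, so its outcome lies in $S$. The limit set $K$ of this tree is therefore a subset of $S\cap B$.

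\textbf{Counting children.} This is the main local estimate. Consider the balls $B(y,\beta\rho_n)$ with centres $y$ forming a maximal $4\beta\rho_n$-separated subset of $B(x_n,\rho_n/2)$. By (AHL), there are at least about $\tfrac{a_\mu}{b_\mu}(4\beta)^{-\delta}$ of them, up to the factor $\tfrac12$ absorbed into $c_1$. A ball $B(y,\beta\rho_n)$ is illegal only if it meets $A_n$, which forces $y\in\Delta(P_n,2\beta\rho_n)$. The balls $B(y,\beta\rho_n)$ are disjoint, and each illegal one lies inside $B_n\cap\Delta(P_n,3\beta\rho_n)$. By (DEC), this region has measure at most $C_{dec}(3\beta)^{\eta}\mu(B_n)$. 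Comparing with $\mu(B(y,\beta\rho_n))\ge a_\mu(\beta\rho_n)^\delta$ bounds the number of illegal children by roughly $c_2\beta^{\eta-\delta}$, after adjusting constants to the $2^{\eta+1}$ form. So each standard node has at least
\[
M:=(c_1-c_2\beta^\eta)\beta^{-\delta}
\]
legal children. Condition \eqref{eq: geometric beta bound} is precisely what makes $c_1-c_2\beta^\eta>0$, and hence guarantees $M\ge 1$, with children separated by gaps comparable to their radius.

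\textbf{Mass distribution.} Define $\nu$ on $K$ by splitting mass equally among the children at each standard turn and passing it through unchanged at exceptional turns. After $k$ turns, with $n$ exceptional turns among them (so $N_n<k\le N_{n+1}$), the radius is
\[
\rho_k=\rho_0\,\beta^{\,k+\sum_{i\le n}(R_i-1)}.
\]
The number of standard turns is $k-n$. Hence each level-$k$ ball has $\nu$-mass at most $M^{-(k-n)}$. For an arbitrary small ball $B(x,r)$, take $k$ with $\rho_{k+1}\le r<\rho_k$. The separation of siblings means $B(x,r)$ meets only boundedly many level-$k$ balls. This gives
\[
\frac{\log\nu(B(x,r))}{\log r}\gtrsim \frac{(k-n)\log M}{-\bigl(k+\sum_{i\le n}(R_i-1)\bigr)\log\beta}.
\]
Note that $\log M/(-\log\beta)=\delta-\log(c_1-c_2\beta^\eta)/\log\beta$. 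The worst case inside the window $N_n<k\le N_{n+1}$ occurs right after the exceptional turn, at $k=N_n+1$. Its prefactor has the shape $1-\tfrac{\sum R_i}{N_n+1+\sum(R_i-1)}$; I expect the bookkeeping to recover exactly that form, up to harmless $O(n)$ corrections. Taking $\liminf$ and applying the mass distribution principle gives the Hausdorff bound. For packing dimension, we instead use the upper local dimension, i.e.\ the $\limsup$ over scales. The best scales are just before an exceptional turn, at $k=N_n$ with $n-1$ exceptional turns completed, which yields the stated $\limsup$ expression.

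\textbf{Main obstacle.} The delicate step is the bookkeeping of the scale-versus-generation ratio at exceptional turns. It must produce exactly the stated fractions, and the bounded-overlap argument has to remain valid across the abrupt radius drop $\beta^{R_j}$. There Alice's ball may sit anywhere inside $B_{N_j}$, so separation from other branches must be inherited from the parent level rather than from the current one.
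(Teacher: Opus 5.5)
Your proposal follows the paper's proof. The ingredients match: the Cantor set $\cK\subseteq S\cap B$ of legal plays, the equal-splitting measure, the worst Hausdorff scale just after an exceptional turn with overlap counted at the parent level $N_n$ (this is the paper's second case), and the packing bound taken at the scales $\rho_{N_n}$. The bookkeeping is exact, with no $O(n)$ corrections, because $\frac{k-n}{k+\sum_{i\le n}(R_i-1)}=1-\frac{\sum_{i\le n}R_i}{k+\sum_{i\le n}(R_i-1)}$. At $k=N_n+1$, and at $k=N_n$ with $n-1$ exceptional turns, this is literally the stated fraction.

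There is one slip in the count of children. Do not require $4\beta\rho_n$-separated siblings. By maximality the balls $B(y,4\beta\rho_n)$ cover $B(x_n,\rho_n/2)$, so such a set only guarantees about $\frac{a_\mu}{b_\mu}(8\beta)^{-\delta}$ centres, not $\frac{a_\mu}{b_\mu}(4\beta)^{-\delta}$. Since $\frac{a_\mu}{b_\mu}8^{-\delta}<c_1$ once $\delta>1$, this would not give the stated constant. Mere disjointness, i.e.\ a maximal $2\beta\rho_n$-separated set, gives $\frac{a_\mu}{b_\mu}(4\beta)^{-\delta}$, which is the paper's $M$. Nothing is lost by dropping the separation: the bounded-overlap estimate needs only disjointness of same-level balls plus (AHL), through the volume argument the paper uses.

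For packing dimension you argue through upper local dimensions of $\nu$. The paper instead bounds $\overline{\dim}_B(B\cap\cK)$ for every cylinder $B$ and applies the Bishop--Peres lemma. Both routes work here, because the good scales $\rho_{N_n}$ are the same for every point.
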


\begin{remark} \rm
    If we can choose the pair of sequences $(N_{i},R_{i})$ for every $\beta>0$ such that
    \begin{equation} \label{eq: limit to zer}
        \limsup_{n\to\infty} \frac{\sum_{i=1}^{n}R_{i}}{N_{n}+1+\sum_{i=1}^{n}(R_{i}-1)}=0 ,
    \end{equation}
    then Theorem~\ref{winning dimension} implies $\dimh (S\cap B) \geq \delta$.
\end{remark}

\begin{remark}
A few remarks on our game with respect to other previously known games are in order. As in Schmidt's original game, Bob's ordinary moves are required to have radius exactly $\beta$ times the previous radius, unlike McMullen's Strong winning game \cite{McMullen2010}. Without the sequence of integer pairs $(N_{i},R_{i})$ the game is similar to the absolute winning game, see for example \cite{McMullen2010, BFS2019}. The addition of the subsequence of turns where Alice has more control over the outcome of the game is similar to the Rapid winning and $\Psi$-Rapid winning game \cite{HatefiSimmons2024,HusSim}.
\end{remark}

% \begin{remark}
% The motivation behind the definition of this game is simple. In order to belong to the sets of interest in this paper one is required to belong to $\Bad_{d}(\varepsilon, \|\cdot\|)$ while simultaneously avoiding the set $\Bad_{d}(\varepsilon(1+\delta\varepsilon^{d}), \|\cdot\|)$. To do this the game above is designed so that one can in some sense take points belonging to $\Bad_{d}(\varepsilon, \|\cdot\|)$ and then along the sequence $(N_{i})_{i\in \N}$ inject conditions which ensure the point does not belong to $\Bad_{d}(\varepsilon(1+\delta\varepsilon^{d}), \|\cdot\|)$. This is similar to ideas that appear in \cite{Bu03, BugMor2011, Moreira2018, KLWZ24, BandiDeSaxce, BakerWard} when studying sets of exact approximation order.
% \end{remark}

\subsection{Proof of Theorem~\ref{main}}

It is clear that in order to prove Theorem~\ref{main}, the following statement is needed.

\begin{theorem} \label{thm:lagrange winning} Fix
\begin{equation} \label{eq: beta parameter}
    0<\beta<\min\left\{\tfrac{\lambda_{d}(B(0,1))}{\lambda_{d-1}(B(0,1))}2^{-5d-4},  \left(\frac{c}{D_{d,\|\cdot\|_{2}}}\right)^{d+1} ,400^{-2(d+1)}\right\},
\end{equation}
where $c=\left((d!)2^{d}\lambda_{d}(B(0,1))\right)^{-\frac{1}{d}}$, and suppose that
\begin{equation} \label{eq: beta epsilon parameter}
    0<\varepsilon< c \beta^{6}.
\end{equation}
Pick parameters $(N_{i},R_{i})$ such that $N_{0}=0$ and
\begin{align} \label{eq: NR parameter}
     N_i&>\max\left\{\frac{1}{|\log \beta|} \left(\log \rho-\log c -(d+1)\log(\rho \min\left\{ 1, \varepsilon c^{-1} \beta^{-N_{i-1}-1} \right\} )\right), \left\lceil\beta^{-N_{i-1}}\right\rceil \right\}\\
     R_{i}&=R=\left\lceil 4-d+(d+1)\frac{-\log \varepsilon+\log c}{|\log \beta|} \right\rceil, \label{eq: Ri parameter} %\\
    %&\lim_{n\to\infty} \frac{nR}{N_{n}+1+n(R-1)}=0, \label{eq: tend to zero assumption}
\end{align}
and let $H\subset \R^{d}$ denote the set of affine hyperplanes. Then, the set
\begin{equation*}
    \left\{ \bx \in \R^{d}: \varepsilon \leq \Theta_{d}(\bx) \leq  \varepsilon\left(1+\beta^{-6d-2}2^{d}d!\lambda_{d}(B(0,1)) \varepsilon^{d}\right) \right\}
\end{equation*}
is $(N_{i},R_{i})$-rapid $(\beta, \rho, H)$-absolute winning.

\end{theorem}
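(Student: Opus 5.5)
We describe a strategy for Alice that secures both inequalities at once. The lower bound $\Theta_d(\bx)\ge\varepsilon$ comes from the ordinary turns. The upper bound comes from the exceptional turns $N_i$.

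\textbf{Ordinary turns: the lower bound.} We follow the hyperplane-absolute-winning strategy of \cite{BroderickKleinbock2015} for $\Bad_d(\varepsilon)$. Suppose Bob has just played $B_n=B(\bx_n,\rho_n)$ with $n\notin(N_i)$. We group rationals $\bp/q$ by the scale at which they are dangerous, namely those with $\varepsilon q^{-1-1/d}$ comparable to $\beta\rho_n$, i.e. $q\approx(\varepsilon/(\beta\rho_n))^{d/(d+1)}$.
\begin{itemize}
\item The Simplex Lemma \cite[p.57]{Schmidtbook1980} says: if $d+1$ rationals with $q\le Q$ lie in a ball of radius $r$ and are affinely independent, then $r^d\gtrsim (d!)^{-1}Q^{-d-1}$. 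The constant $c=((d!)2^d\lambda_d(B(0,1)))^{-1/d}$ together with the conditions \eqref{eq: beta parameter} and \eqref{eq: beta epsilon parameter} are chosen exactly so that this applies.
\item Hence all rationals of that scale whose neighbourhoods $B(\bp/q,\varepsilon q^{-1-1/d})$ meet $B_n$ lie on a single affine hyperplane $P_n$.
\item Alice deletes $\Delta(P_n,\beta\rho_n)$. This removes every such neighbourhood, since the neighbourhood radius is at most about $\beta\rho_n$.
\end{itemize}
Each $q$ is handled at some ordinary turn. The only exceptions are rationals whose scale falls in the $R$ levels skipped at exceptional turns; those are treated by the exceptional construction below. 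In the limit this gives $\|q\bx-\bp\|\ge\varepsilon q^{-1/d}$ for all large $q$, so $\Theta_d(\bx)\ge\varepsilon$.

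\textbf{Exceptional turns: the upper bound.} At $n=N_j$ Alice must choose a ball of radius $\beta^R\rho_n$ inside $B_n$. She wants the outcome to satisfy
\[
\|\bx-\bp/q\|\le\varepsilon(1+\kappa\varepsilon^d)q^{-1-1/d}
\]
for some new rational $\bp/q$, where $\kappa=\beta^{-6d-2}2^dd!\lambda_d(B(0,1))$. This must hold for infinitely many $\bp/q$, and without ever putting $\bx$ strictly inside some $B(\bp'/q',\varepsilon q'^{-1-1/d})$.

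\begin{enumerate}
\item \emph{Find a rational.} Using Lemma~\ref{lem:eventually see a rational}, find inside $B_n$ a rational $\bp/q$ whose denominator is locally minimal. Its sphere $S=\partial B(\bp/q,\varepsilon q^{-1-1/d})$ has radius comparable to $\beta^{O(1)}\rho_n$. The lower bound on $N_i$ in \eqref{eq: NR parameter} guarantees that such a rational exists at the right scale, and that $N_i$ grows fast enough for the dimension estimate.
\item \emph{Place the ball.} Alice picks $A_n$ centred on a point of $S$ just outside $B(\bp/q,\varepsilon q^{-1-1/d})$, with radius $\beta^R\rho_n$. The choice of $R$ in \eqref{eq: Ri parameter} makes $\beta^R\rho_n\approx\beta^{4-d}\varepsilon^{d+1}q^{-1-1/d}\cdot(\ldots)$. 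Every point of $A_n$ is therefore within $\varepsilon\cdot\kappa\varepsilon^d q^{-1-1/d}$ of the sphere, which gives the upper bound for this $q$.
\item \emph{Avoid other rationals.} The chosen point on $S$ must avoid the $\varepsilon$-neighbourhoods of all other rationals whose denominators lie in the skipped range. This is the content of Lemma~\ref{lem: spyglass lem}. Near $\bp/q$, the Simplex Lemma forces the other rationals with comparable denominators onto a hyperplane through or near $\bp/q$. Their neighbourhoods are small, and the Euclidean geometry of $S$ leaves a cap of $S$ at positive distance from them.
\end{enumerate}

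Since this happens at every $N_j$, infinitely many $q$ achieve the bound, so $\Theta_d(\bx)\le\varepsilon(1+\kappa\varepsilon^d)$.

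\textbf{Main obstacle.} The hardest step is step 3, the avoidance on the sphere, together with making the denominator ranges of the ordinary and exceptional turns tile without gaps. I would first try to show that all rationals with $q'\le q\beta^{-C}$ whose neighbourhoods meet $B(\bp/q,2\varepsilon q^{-1-1/d})$ lie on one hyperplane through $\bp/q$. Then Alice picks the point of $S$ antipodal, in the normal direction, to that hyperplane. Larger denominators have tiny neighbourhoods and are handled by the subsequent ordinary turns; the $\beta^{-6d-2}$ slack in $\kappa$ absorbs the mismatch between scales.
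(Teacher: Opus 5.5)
\textbf{The exceptional turn is missing its main multi-scale step.} The gap sits in your sentence ``larger denominators have tiny neighbourhoods and are handled by the subsequent ordinary turns.'' The forced ball $A_{N_i}$ has radius $\beta^{R}r(B_{N_i})\asymp_\beta \varepsilon^{d+1}q^{-(d+1)/d}$. It must, to fit in the shell of width $\kappa\varepsilon^{d+1}q^{-(d+1)/d}$. Alice's next ordinary move can only delete a $\beta^{R+1}r(B_{N_i})$-neighbourhood of one hyperplane. So every $\bp'/q'$ whose ball $B(\bp'/q',\varepsilon q'^{-(d+1)/d})$ is larger than that must be avoided by the choice of $A_{N_i}$ itself. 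That covers all denominators up to roughly $\varepsilon^{-d^{2}/(d+1)}q$; the paper secures all $q'<\widetilde{Q}_{i,R-1}$.

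One Simplex Lemma application at the scale of $S$, i.e.\ to a ball of radius $\asymp\varepsilon q^{-(d+1)/d}$, confines to a hyperplane only the rationals with $q'\lesssim\varepsilon^{-d/(d+1)}q$. For $d\ge 2$ this leaves about $(d-1)\ell_i$ further $\beta$-adic levels of denominators, where $\ell_i\approx\log(1/\varepsilon)/|\log\beta|$; this is the paper's $t_i=(\ell_i-4)(d-1)$. The $\varepsilon$-balls of these rationals have radii between roughly $\varepsilon^{2}q^{-(d+1)/d}$ and $\varepsilon^{d+1}q^{-(d+1)/d}$, mostly far larger than $A_{N_i}$. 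They lie on no common hyperplane and can cover your antipodal point. The factor $\beta^{-6d-2}$ in $\kappa$ is a fixed power of $\beta$, so it cannot absorb a range that grows as $\varepsilon\to0$. Note also that the radius of $S$ is $\asymp_\beta\varepsilon\, r(B_{N_i})$, not $\beta^{O(1)}r(B_{N_i})$. That factor $\varepsilon$ is exactly why there are unboundedly many scales to cross.

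\textbf{What Lemma~\ref{lem: spyglass lem} supplies.} At most of these scales the shell is thinner than Alice's balls, so the balls must straddle the surface. The paper descends through $t_i$ nested balls centred on the envelope surface $S_i$, shrinking by $\beta^{d/(d-1)}$ per step and clearing one more level of denominators each time. It then steps off $S_i$ into the cone \eqref{eq: cone of avoidence} and finishes with ordinary Simplex steps down to radius $\beta^{R}r(B_{N_i})$.

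The real obstacle, which your plan does not address, is when $S_i$ inside the current ball lies in a thin neighbourhood of a hyperplane $\cH^{*}$. The Simplex hyperplane of that ball may then be nearly tangent to $S_i$, and deleting its neighbourhood deletes the surface. The paper instead applies the Simplex Lemma to the thin cylinder joining $\omega_{i,j}$ to the centre $\bp_{i,j}/q_{i,j}$ of its sphere. The cylinder's small volume forces the dangerous rationals onto a hyperplane $\cH_M$ through $\bp_{i,j}/q_{i,j}$. Such a hyperplane is transversal to $\cH^{*}$ (the Claim, via Lemma~\ref{lem: key geometric lem}), and Lemma~\ref{lem: colliding hyperplanes} then leaves room on $S_i$. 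The cylinder volume is also what fixes the shrinking rate $\beta^{d/(d-1)}$.

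\textbf{A secondary issue in your Step~1.} Step~1 needs the denominator found to be within a $\beta^{-O(1)}$ factor of the current scale. This is why the paper's regular turns avoid the $\varepsilon$-independent balls $B(\bp/q,c_1\beta^{2}q^{-(d+1)/d})$ and feed $\gamma=c_1\beta^{2}$ into Lemma~\ref{lem:eventually see a rational}. With only the $\varepsilon q^{-1-1/d}$-avoidance you describe, that lemma has $\gamma\approx\varepsilon$. It then bounds the denominator only up to an extra factor of about $\varepsilon^{-d^{2}/(d+1)}$. That permits a sphere of radius $\asymp\varepsilon^{d+1}r(B_{N_i})$, comparable to your forced ball.
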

\begin{remark}
    The bounds on $\beta>0$ and hence $\varepsilon>0$ are not optimised. The first term in \eqref{eq: beta parameter} is necessary to ensure Theorem~\ref{winning dimension} is applicable. The latter two terms could probably be improved. Note $R_{i}$ is independent of $i$ and is positive due to \eqref{eq: beta epsilon parameter} and so it is possible to pick a sequence $(N_{i})_{i \in \N}$ so that \eqref{eq: limit to zer} is satisfied.
\end{remark}

 \begin{proof}[Proof of Theorem~\ref{main} via Theorem~\ref{thm:lagrange winning}]
By Theorem~\ref{thm:lagrange winning} the set 
\begin{equation*}
    S(\varepsilon, \Delta(\beta)):=\left\{ \bx \in \R^{d}: \varepsilon \leq \Theta_{d}(\bx) \leq  \varepsilon\left(1+ \Delta(\beta)\varepsilon^{d}\right) \right\}
\end{equation*}
with
\begin{equation*}
    \Delta(\beta)=\beta^{-6d-2}2^{d}(d!\lambda_{d}(B(0,1)))
\end{equation*}
is winning for all pairs $(\beta, \varepsilon)$ satisfying \eqref{eq: beta parameter}, \eqref{eq: beta epsilon parameter}, with suitable choices of $(N_{i},R_{i})$. Applying Theorem~\ref{winning dimension} we see that
\begin{equation*}
    \dimh S(\varepsilon, \Delta(\beta)) \geq d- \frac{\log \left(c_{1}-c_{2} \beta\right)}{\log \beta},
\end{equation*}
where we have used that
\begin{equation*}
    \limsup_{n\to\infty} \frac{\sum_{i=1}^{n}R_{i}}{N_{n}+1+\sum_{i=1}^{n}(R_{i}-1)}\leq\limsup_{n\to \infty} \frac{nR}{\beta^{-n+1}+1+nR-n}=0.
\end{equation*}
 Note the bound holds for all $0<\varepsilon<c\beta^{6}$, and so taking
\begin{equation} \label{eq: delta bound}
    \delta=\Delta\left( \tfrac{1}{2}\min\left\{\tfrac{\lambda_{d}(B(0,1))}{\lambda_{d-1}(B(0,1))}2^{-5d-4},  \left(\frac{c}{D_{d,\|\cdot\|_{2}}}\right)^{d+1} ,400^{-2(d+1)}\right\} \right)
\end{equation}
we have that $\dimh S(\varepsilon, \delta)>0$ for all
\begin{equation}\label{eq: epsilon bound}
    0<\varepsilon<c \left(\tfrac{1}{2}\min\left\{\tfrac{\lambda_{d}(B(0,1))}{\lambda_{d-1}(B(0,1))}2^{-5d-4},  \left(\frac{c}{D_{d,\|\cdot\|_{2}}}\right)^{d+1} ,400^{-2(d+1)}\right\}\right)^{6}.
\end{equation}
The dimension statements follow readily by noting that $\dimh S(\varepsilon, \Delta(\beta))\to d$ as $\beta\to 0$.
 \end{proof}

% %%%%%%%%%%%%%%%%%%%%%%%%%%%%%%%%%
% %
% %
% %   PRELIMINARIES
% %
% %
% %%%%%%%%%%%%%%%%%%%%%%%%%%%%%%%%%

\section{Preliminaries for Theorem~\ref{winning dimension}} \label{section:prelims}
This section gives the definitions of Hausdorff, Packing, and Box dimension. We also prove the Box dimension statement appearing in Theorem~\ref{main}.

\subsection{Fractal Geometry} \label{section:fractal geometry}
In this section we recall various definitions of dimensions. See \cite{Falconer2013} for further details and properties. For $F\subset X$ a non-empty bounded set, for any $\rho>0$ the number $N_{\rho}(F)$ is the smallest number of balls, with radii equal to $\rho$, needed to cover $F$. Define
\begin{equation*}
    \dimub F=\limsup_{\rho\to 0} \frac{\log N_{\rho}(F)}{-\log \rho}\, , \quad \dimlb F=\liminf_{\rho\to 0} \frac{\log N_{\rho}(F)}{-\log \rho}\, ,
\end{equation*}
and, if both limits coincide, $\dimb F=\lim_{\rho\to 0} \tfrac{\log N_{\rho}(F)}{-\log \rho}$. Lastly, as introduced by Tricot \cite{Tricot1982} define the packing dimension, $\dimp$ as
\begin{equation*}
    \dimp F:= \inf\left\{\sup_{i} \dimub F_{i}: F\subset \bigcup_{i}F_{i} \right\}.
\end{equation*}
We have the following useful result on the lower bound of the packing dimension of sets.
\begin{lemma}[{\cite[Lemma 2.1(i)]{BishopPeres1996}}] \label{packing dim lower bound}
    Let $F\subset X$ be a closed subset. If,
for every open set $V$ intersecting $F$
\begin{equation*}
    \overline{\dim}_{B}(V\cap F)\geq s,
\end{equation*}
then $\dimp E\geq s$.
\end{lemma}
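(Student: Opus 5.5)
The plan is a Baire category argument, which is the standard route for lower bounds on packing dimension via upper box dimension. (In the statement, $E$ should read $F$.)

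Fix an arbitrary countable cover $F\subset\bigcup_{i}F_{i}$. By the definition of $\dimp$ it suffices to show $\sup_i \dimub F_i\geq s$. Two elementary properties of upper box dimension will be used:
\begin{itemize}
\item \emph{Monotonicity:} if $A\subset A'$ then $\dimub A\leq\dimub A'$, since $N_\rho(A)\leq N_\rho(A')$.
\item \emph{Invariance under closure:} $\dimub \overline{A}=\dimub A$. Indeed, any cover of $A$ by $\rho$-balls yields a cover of $\overline{A}$ by the concentric $2\rho$-balls, so $N_{2\rho}(\overline{A})\leq N_\rho(A)$, and the factor $2$ disappears in the limit.
\end{itemize}
Because upper box dimension is only defined for bounded sets, first localise. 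Pick a point of $F$ and a small open ball $U$ around it. It is then enough to work with the bounded sets $\overline{F_i\cap F\cap U}$, since by the two properties $\dimub F_i\geq \dimub\bigl(\overline{F_i\cap F\cap U}\bigr)$; if some $F_i$ is unbounded, I interpret $\dimub F_i$ as the supremum over its bounded pieces, which gives the same inequality.

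Next, apply Baire's theorem. The set $F\cap\overline{U}$ is a closed subset of the complete space $X$, hence itself a complete metric space. It is covered by the countably many closed sets $K_i:=\overline{F_i\cap F\cap U}\cap\overline U$; the closures are taken in $X$ and lie inside $F$ because $F$ is closed. By Baire, some $K_{i_0}$ has nonempty interior relative to $F\cap\overline U$. Concretely, there is an open set $V\subset U$ with $V\cap F\neq\emptyset$ and $V\cap F\subset K_{i_0}$.

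Combining monotonicity, closure invariance and the hypothesis applied to this $V$ gives
\begin{equation*}
\dimub F_{i_0}\geq \dimub\bigl(\overline{F_{i_0}\cap F\cap U}\bigr)\geq \dimub (V\cap F)\geq s .
\end{equation*}
Hence $\sup_i\dimub F_i\geq s$ for every countable cover, and taking the infimum over covers yields $\dimp F\geq s$.

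The only genuinely delicate step is the Baire category step. Its justification rests on two points: $F$ must be closed, so that it inherits completeness from $X$; and the covering sets must first be replaced by their closures, which the closure invariance of $\dimub$ makes harmless. The boundedness bookkeeping is routine.
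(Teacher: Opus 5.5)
Your strategy is the right one. It is the Baire category argument behind the cited Bishop--Peres lemma (the paper itself gives no proof), and your two auxiliary facts, monotonicity and $\dimub\overline{A}=\dimub A$, are correctly justified. However, one step of the localised Baire argument is false as written: the sets $K_i=\overline{F_i\cap F\cap U}$ need not cover $F\cap\overline{U}$. A point of $F$ lying in $\overline{U}\setminus U$ belongs to some $K_i$ only if it is a limit of points of $F_i\cap F\cap U$ for a single $i$, and nothing in your choice of $U$ guarantees this. For example, it fails if the point is an isolated point of $F$. Concretely, take $X=\R$, $F=\{0,1\}$, $U=(-1,1)$, $F_1=\{0\}$, $F_2=\{1\}$. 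Then $K_1=\{0\}$ and $K_2=\emptyset$, so $1\in F\cap\overline{U}$ is not covered, and Baire's theorem cannot be invoked on $F\cap\overline{U}$ with these sets.

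The repair is easy, because Baire's theorem needs no boundedness: localise after applying it, not before.

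\textbf{Step 1.} Apply Baire in the complete space $F$ to the closed cover $\{\overline{F_i\cap F}\}_i$. This gives an open $V_0$ with $\emptyset\neq V_0\cap F\subset\overline{F_{i_0}\cap F}$.

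\textbf{Step 2.} Pick $x\in V_0\cap F$ and set $V=V_0\cap B(x,1)$. Since $V$ is open, any $y\in V\cap F$ is a limit of points of $F_{i_0}\cap F$ that eventually lie in $V\subset B(x,1)$. Hence $V\cap F\subset\overline{F_{i_0}\cap F\cap B(x,1)}$, which is a bounded set.

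\textbf{Step 3.} Your chain $\dimub F_{i_0}\geq\dimub\bigl(\overline{F_{i_0}\cap F\cap B(x,1)}\bigr)\geq\dimub(V\cap F)\geq s$ then goes through unchanged.

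Alternatively, you can keep your $U$ but apply Baire in $F\cap U$ instead. This is an open subset of the complete space $F$, hence a Baire space, and you would use the relatively closed sets $\overline{F_i\cap F\cap U}\cap U$, which do cover it.
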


% Armed with the first part of Theorem~\ref{main} the statement on the Box dimension of $\cL_{d}(\|\cdot\|)$ follows easily. 
% \begin{proof}[Proof of Theorem~\ref{main}, \eqref{metric statement}]
% For any $r>0$,
% \begin{align*}
%     N_{\delta r^{d+1}}(\cL_{d}(\|\cdot\|_{2}))&\geq N_{\delta r^{d+1}}\left(B(0,r+\delta r^{d+1})\cap \cL_{d}(\|\cdot\|_{2})\right).
%     %\\    &\geq N_{\delta r^{d+1}}\left([0,r+\delta r^{d+1})\right),
% \end{align*}
% Since every interval of $$[0,r+\delta r^{d+1})=[0,\delta r^{d+1})\cup [\delta r^{d+1}, 2\delta r^{d+1})\cup \ldots \cup [r-\delta r^{d+1},r)\cup [r,r+\delta r^{d+1})$$ contains at least one point in $\cL_{d}(\|\cdot\|)$ by the first part of Theorem~\ref{main} and an interval of size $\delta r^{d+1}$ can intersect at most two of these, it follows that
% \begin{align*}
%     N_{\delta r^{d+1}}\left([0,r+\delta r^{d+1})\cap \cL_{d}(\|\cdot\|_{2})\right)&\geq \tfrac{1}{2}\frac{r}{\delta r^{d+1}}=\tfrac{1}{2}\delta^{-1}r^{-d}\\
%     & =\tfrac{1}{2}\delta^{-\tfrac{1}{d+1}} (\delta r^{d+1})^{-\tfrac{d}{d+1}}\, , 
% \end{align*}
% thus
% \begin{equation*}
%     \dimlb \cL_{d}(\|\cdot\|_{2})\geq \liminf_{r\to0}\frac{\log N_{\delta r^{d+1}}(\cL_{d}(\|\cdot\|_{2}))}{-\log \delta r^{d+1}}=\frac{d}{d+1}=1-\tfrac{1}{d+1}\, .
% \end{equation*}
% \end{proof}
The Hausdorff $s$-measure is defined as
\begin{equation*}
\cH^{s}(F)=\lim_{\rho \to 0^{+}}\inf\left\{ \sum_{i}|B_{i}|^{s} : \bigcup_{i} B_{i} \supseteq F \quad \text{ is countable, and }\quad |B_{i}|\leq \rho \right\}\, ,
\end{equation*}
and the Hausdorff dimension is defined to be
\begin{equation*}
\dimh F=\inf\left\{s\geq 0 : \cH^{s}(F)<\infty \right\}\, .
\end{equation*}
A standard tool to provide a lower bound on the Hausdorff dimension of a set is the Mass Distribution Principle, see for example \cite[Proposition 4.2]{Falconer2013}.
\begin{lemma*}[Mass Distribution Principle] \label{MDP}
Let $\mu$ be a probability measure supported on a subset $F \subseteq X$. Suppose that for $s>0$ there exists constants $c,\varepsilon>0$ such that
\begin{equation*}
\mu(B) \leq c |B|^{s}
\end{equation*}
for all balls $B \subset X$ with $|B|<\varepsilon$. Then $\dimh F \geq s$.
\end{lemma*}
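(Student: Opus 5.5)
The plan is to bound the Hausdorff $s$-measure of $F$ from below directly, using the mass bound on balls together with countable subadditivity of $\mu$. By the definition of $\cH^{s}$ given above, it is enough to find a constant $\kappa>0$ such that for every $\rho$ small enough, every countable cover of $F$ by sets $B_i$ with $|B_i|\le\rho$ satisfies $\sum_i |B_i|^{s}\geq\kappa$. Then $\cH^{s}(F)\geq \kappa>0$, and I would finish by the standard scaling argument.

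\textbf{Step 1 (the covering bound).} Fix $0<\rho<\varepsilon$ and a countable cover $F\subseteq\bigcup_i B_i$ with $|B_i|\le\rho$. Since the definition of $\cH^{s}$ above is phrased with balls, I take the $B_i$ to be balls. If one prefers arbitrary sets $U_i$, I would first replace each by a ball of diameter at most $2|U_i|$ containing it, which only costs a factor $2^{s}$. Because $\mu$ is a probability measure supported on $F$, we have $\mu(X\setminus F)=0$. Countable subadditivity (applied to outer measure, if $F$ is not assumed measurable) then gives
\begin{equation*}
1=\mu(F)\le \sum_i \mu(B_i)\le c\sum_i |B_i|^{s},
\end{equation*}
where the hypothesis applies because $|B_i|\le\rho<\varepsilon$. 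Hence $\sum_i|B_i|^{s}\ge c^{-1}$ (or $\ge c^{-1}2^{-s}$ in the general-set version). Taking the infimum over covers and letting $\rho\to0$ yields $\cH^{s}(F)\ge c^{-1}2^{-s}>0$.

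\textbf{Step 2 (from positive measure to dimension).} For $t<s$ and any cover with $|B_i|\le\rho$, we have $|B_i|^{t}\ge\rho^{t-s}|B_i|^{s}$. Therefore the $\rho$-level infimum for $\cH^{t}$ is at least $\rho^{t-s}$ times the positive constant from Step 1, and this tends to $\infty$ as $\rho\to0$. So $\cH^{t}(F)=\infty$ for every $t<s$, and by the definition $\dimh F=\inf\{t:\cH^{t}(F)<\infty\}$ we get $\dimh F\ge s$.

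There is no real obstacle here. The only points needing care are the measurability issue, which is handled by using outer measure, and the passage between covers by arbitrary sets and covers by balls, which is handled by the factor $2^{s}$.
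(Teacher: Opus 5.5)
Your proposal is correct and is essentially the standard argument behind the result the paper cites (Falconer's Proposition 4.2, which the paper quotes without proof): countable subadditivity gives $\mathcal{H}^{s}(F)\geq \mu(F)/c>0$, and $\dimh F\geq s$ follows. One small fix is needed in the version with arbitrary covering sets: take $\rho<\varepsilon/2$, so that the enclosing balls, whose diameter can be as large as $2\rho$, still satisfy the hypothesis $|B|<\varepsilon$.
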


\section{Proof of Theorem~\ref{winning dimension}} \label{winning dim proof}

Briefly, the proof is as follows: let us restrict Bobs legal moves at each step to $M$ disjoint different balls. By a volume argument using (AHL) we can see that $M\asymp \beta^{-\delta}$. By a volume argument using (DEC) we can see that Alice removes at most $k\asymp \beta^{\eta-\delta}$ of the $M$ options available to Bob in a standard turn (when $n \notin (N_{i})_{i\in \N}$). This is true at all times except along the sequence $(N_{i})_{i\in \N}$. At times $N_{i}$, Alice gives Bob precisely $1$ option from the $M$ possible balls for $R_{i}$ turns. From this idea we can construct a Cantor set with a ball $B$ in a typical level having $M-k$ children of size $\beta r(B)$, and at exceptional levels having exactly one child of size $\beta^{R_{i}}r(B)$. Constructing a natural measure on this Cantor set and applying the Mass Distribution Principle gives us our result.

    \begin{proof}[Proof of Theorem~\ref{winning dimension}] 
    Firstly, by the $\delta$-Ahlfors measure property we can use a volume argument to see that the number of disjoint balls contained in $B$ with centres in $X$ and radius $\beta r(B)$ is at least 
    \begin{equation*}
         \frac{\mu(\tfrac{1}{2}B)}{\mu(2\beta B))}\geq \left\lfloor \frac{a_{\mu}}{b_{\mu}}\left(\frac{1}{4}\right)^{\delta} \beta^{-\delta}\right\rfloor=:M\, .
    \end{equation*}
    To each ball $B\subset X$ let $\sigma_{1}(B), \ldots , \sigma_{M}(B)$ denote a generic disjoint collection of sub-balls contained in $B$ with radii $\beta r(B)$. Note these balls are in no fixed position and are free to vary in position between balls $B$.
   
    Let $B$ denote some generic turn of Bobs (not along the sequence $(N_{i})_{i\in \N}$). By a volume argument and the $(\eta, \cP)$-absolutely decaying property of $\mu$, the number of disjoint balls $B' \subseteq B$ of radius $\beta r(B)$ that can intersect $\Delta(P,\beta r(B))$ is at most
    \begin{align*}
            \frac{\mu\left(B\cap\Delta(P,2\beta r(B))\right)}{\mu(\beta B)} \leq C_{dec}\left(\frac{2\beta r(B)}{r(B)}\right)^{\eta}\frac{\mu(B)}{\mu(\beta B)}
            \leq \left\lceil C_{dec}\frac{b_{\mu}}{a_{\mu}}2^{\eta} \beta^{\eta-\delta}\right\rceil =:k\, .
    \end{align*}
    That is, by removing $\Delta(P,\beta r(B))$ Alice can stop Bob choosing at most $k$ balls from the set $(\sigma_{i}(B))_{i=1,\ldots, M}$. Thus Bob has at least $M-k$ balls to choose from on his next turn. Let $\sigma_{1}(B), \ldots, \sigma_{M-k}(B)$ denote the $M-k$ disjoint balls of radius $\beta r(B)$ that avoid the response by Alice to the ball $B$. Note the balls will depend on Alices response, but the important point is that we take exactly $M-k$ of them, they are contained in $B$, they avoid $\Delta(P,\beta r(B))$, and they are of radius $\beta r(B)$. 

    On turns along the sequence $(N_{i})_{i\in\N}$ Bob has only one choice of ball, the ball Alice forces him to pick. Let $B_{N_{i}}$ denote the ball Bob chose on his $N_{i}$th turn and $\sigma_{\infty_i}(B_{N_{i}})=B_{N_{i}+1}=A_{N_{i}}$ denote the ball Bob is forced to choose by Alice on his next turn. Let
    \begin{equation*}
         \cI_{n}=\begin{cases}
         \, \, \, \, \, \, \, \, \,   \{\infty_i\} \quad \quad \, \text{ if } \quad n=N_{i}+1 \, \,  \text{ for some }\, \,  i \in \N \, ,  \\
            \{1, \ldots , M-k\} \quad \text{ otherwise.} 
        \end{cases}
    \end{equation*}  
    Then, by setting $I_{0}=B_{0}$ and inductively, for each $B\in I_{n-1}$
    \begin{equation*}
        I_{n}(B):=\bigcup_{i \in \cI_{n}}\sigma_{i}(B) \quad \text{ and } \quad I_{n}=\bigcup_{B\in I_{n-1}} I_{n}(B)\, ,
    \end{equation*}
    we can define the Cantor set
    \begin{equation*}
        \cK:=\bigcap_{n\in \N} \bigcup_{B\in I_{n}}B\, .
    \end{equation*}
    Clearly every limit point in the Cantor set $\cK$ belongs to a legal sequence of Bobs moves, including the moves forced by Alice, in the $(N_{i},R_{i})$-rapid $(\beta,\rho,\cP)$-absolute game. Since $S$ is winning set we must have that $\cK\subseteq S$. \par 
    Calculating a lower bound Hausdorff dimension on the set $\cK$ is a standard application of the Mass Distribution Principle. For completeness we include it here. Construct a mass distribution $\nu$ on $\cK$ by setting $\nu(B_{0})=1$, and then inductively, supposing the mass has been distributed on the balls $B\in I_{n-1}$, for each $B'\in I_{n}(B)$ define
    \begin{equation*}
        \nu(B')=\frac{\nu(B)}{\# \cI_{n}}\, .
    \end{equation*}
    Clearly the measure is mass preserving and the support is $\cK$. The mass distribution defined on all levels of the Cantor set $\cK$  determines a Borel probability measure supported on $\cK$.
    In order to use the Mass Distribution principle we rewrite the measure on balls appearing in the Cantor set in terms of their radii. Using an inductive argument we see that for any ball $B\in I_{n}$ with $1\leq n <N_{1}$ we have $\nu(B)=(M-k)^{-n}$, and for $n\geq N_{1}$
    \begin{equation*}
        \nu(B)=
            (M-k)^{-(n-j_{n})} 
    \end{equation*}
    where $j_{n}:=\#\{i: N_{i}<n\}$. Note that 
    \begin{equation} \label{eq: radius per level}
        r(B)=\beta^{n-j_{n}+\sum_{i=1}^{j_{n}}R_{i}}r(B_{0}) \quad \text{ for any } \, \, B\in I_{n}.
    \end{equation} 
    Hence, for ball $B\in I_{n}$
\begin{equation*}
    \nu(B)=\left(\frac{r(B)}{r(B_{0})}\right)^{\left(1-\frac{\sum_{i=1}^{j_{n}}R_{i}}{n-j_{n}+\sum_{i=1}^{j_{n}}R_{i}}\right)\frac{\log(M-k)}{-\log \beta}}.
\end{equation*}
    Considering the exponent on the radius of $B$, it is strictly increasing as $n$ increases along $N_{j_{n}}\leq n < N_{j_{n}+1}$, and so the smallest possible exponent will be at $n=N_{j_{n}}+1$. Take
    \begin{equation*}
        s_{L}=\left(1-\frac{\sum_{i=1}^{L}R_{i}}{N_{L}+1+\sum_{i=1}^{L}(R_{i}-1)}\right)\frac{\log(M-k)}{-\log \beta}\, .
    \end{equation*}
    For every $s<\liminf_{L\to \infty}s_{L}$ we can choose $L$ sufficiently large so that $\inf_{i\geq L} s_{L}>s$. Then, for all $n>T_{L}$ and $B\in I_{n}$, we have
    \begin{equation*}
        \nu(B) \leq r(B_{0})^{-\inf_{i\geq L} s_{i}}r(B)^{\inf_{i\geq L} s_{i}}\leq C_{s} r(B)^{s}\, 
    \end{equation*}
    with $C_{s}>0$ the implied constant.
    Hence we have an upper bound on the measure of any ball with radius sufficiently small appearing in our construction of $\cK$.\par 
    Now consider a general ball with centre in $\cK$. That is, take any $0<r_{0}<\beta^{T_{L}}r(B_{0})$ and arbitrary ball $F\subset X$ with $r(F)<\tfrac{1}{2}r_{0}$ and some centre $c_{F}\in \cK$. There exists unique $m\in \N$ such that
    \begin{equation*}
        \beta^{m-j_{m}+\sum_{i=1}^{j_{m}}R_{i}}r(B_{0})\leq r(F) < \beta^{m-1-j_{m-1}+\sum_{i=1}^{j_{m-1}}R_{i}}r(B_{0}).
    \end{equation*}
    \begin{itemize}
    \item[-] If $j_{m}=j_{m-1}$ then by a volume argument we see that
    \begin{equation*}
        \#\{B' \in I_{m}: B'\cap F\neq \emptyset\} \leq \frac{\mu\left(B(c_{F},3\beta^{-1}r(B'))\right)}{\mu(B')}\leq \frac{b_{\mu}3^{\delta}r(B')^{\delta}\beta^{-\delta}}{a_{\mu}r(B')^{\delta}}\leq \tfrac{b_{\mu}}{a_{\mu}}(3\beta^{-1})^{\delta}
    \end{equation*}
    and so
    \begin{align*}
        \nu(F) &
        %\leq \mu(\widetilde{B})
        \leq \sum_{B'\in I_{m}\, : \, \,  B'\cap F\neq \emptyset} \nu(B') \leq \tfrac{b_{\mu}}{a_{\mu}}(3\beta^{-1})^{\delta} \nu(B') \leq \tfrac{b_{\mu}}{a_{\mu}}(3\beta^{-1})^{\delta}C_{s} r(B')^{s} \leq \tfrac{b_{\mu}}{a_{\mu}}(3\beta^{-1})^{\delta} C_{s}r(F)^{s}.
    \end{align*}
    
    \item[-]If $j_{m}\neq j_{m-1}$ then we must have $j_{m}=j_{m-1}+1$ and $m=N_{j_{m}}+1$. Consider the level $I_{m-1}$. Each of these balls has exactly one child in level $I_{m}$. By a volume argument
    \begin{equation*}
        \#\{B' \in I_{m-1}: B'\cap F\neq \emptyset\} \leq \frac{\mu\left(B(c_{F},3r(B')\right)}{\mu(B')}\leq \frac{b_{\mu}}{a_{\mu}}3^{\delta},
    \end{equation*}
    where we have used that $3r(B')\geq2r(B')+r(F)$. Thus $F$ intersects at most $\frac{b_{\mu}}{a_{\mu}}3^{\delta}$ in level $I_{m}$, and so 
    \begin{align*}
        \nu(F) &
        %\leq \mu(\widetilde{B})
        \leq \sum_{B'\in I_{m}\, : \, \,  B'\cap F\neq \emptyset} \nu(B') \leq \frac{b_{\mu}}{a_{\mu}}3^{\delta} C_{s} r(B')^{s} \leq  \frac{b_{\mu}}{a_{\mu}}3^{\delta}C_{s}r(F)^{s}
    \end{align*}
\end{itemize}

Note that in both instances the constant is independent of $F$, and so we have for any ball $F\subset B_{0}$ with radius sufficiently small
\begin{equation*}
    \nu(F)\leq C_{s,\mu,\beta}r(F)^{s}.
\end{equation*}

    We can pick $s<\liminf_{L\to\infty}s_{L}$ as arbitrary close as we like. Thus, by the Mass distribution principle
    \begin{equation*}
        \dimh S\cap B_{0} \geq \dimh \cK \geq \liminf_{L\to\infty} s_{L}=\liminf_{L\to\infty} \left(1-\frac{\sum_{i=1}^{L}R_{i}}{N_{L}+1+\sum_{i=1}^{L}(R_{i}-1)}\right)\frac{\log(M-k)}{-\log \beta}\, .
    \end{equation*}
    Noting
   \begin{equation} \label{eq: M-k size}
       M-k =\left\lfloor\beta^{-\delta}\frac{a_{\mu}}{b_{\mu}}\frac{1}{4^{\delta}}\right\rfloor-\left\lceil C_{dec}\frac{b_{\mu}}{a_{\mu}}2^{\eta}\beta^{\eta-\delta}\right\rceil\geq  \beta^{-\delta}\left(\frac{1}{2}\frac{a_{\mu}}{b_{\mu}}\frac{1}{4^{\delta}}-C_{dec}\frac{b_{\mu}}{a_{\mu}}2^{\eta+1}\beta^{\eta} \right)
   \end{equation}
   completes the proof of the Hausdorff dimension lower bound.

%\vspace{5cm}

We now prove the Packing dimension lower bound. Fix some $m\in\N$ and $B\in I_{m}$. For $n>m$, the number of descendents of $B$ in $I_{n}$ is
\begin{equation} \label{eq: number desc packing}
    (M-k)^{(n-j_{n})-(m-j_{m})}.
\end{equation}
For ease of notation let $r_{n}$ denote the radius of such ball. That is, $r_{n}:=\beta^{n-j_n+\sum_{i=1}^{j_n}R_i}r(B_0)$. The number of balls $B'\in I_{n}$ that can intersect a ball $F$ of radius $r_n$ is at most
\begin{equation*}
    \frac{\mu(B(c_F,3r_n))}{\mu(B')}
    \leq
    \frac{b_\mu}{a_\mu}3^\delta.
\end{equation*}
It follows from
\eqref{eq: number desc packing} that any cover needs at least
\begin{equation} \label{eq: covering number packing}
    N_{r_{n}}(B \cap \cK)\geq \frac{a_\mu}{b_\mu 3^\delta}
    (M-k)^{(n-j_{n})-(m-j_{m})}.
\end{equation}
Restrict to the levels $n\in (N_{i})_{i \in \N}$, immediately before an exceptional turn. Using that $j_{N_{i}}=i-1$ we have that $r_{N_{i}}=\beta^{N_{i}-i+1+\sum_{j=1}^{i-1}R_j}r(B_0)$. Therefore,
\begin{align}
    \overline{\dim}_{B}(\cK\cap B)
    &\overset{\text{\eqref{eq: covering number packing}}}{\geq}\limsup_{i\to\infty}\frac{(N_{i}-i+1-(m-j_{m}))\log(M-k)+\log \left( \frac{a_\mu}{b_\mu 3^\delta}\right)}{\left(N_{i}-i+1+\sum_{j=1}^{i-1}R_{j}\right)(-\log\beta)-\log(R(B_{0}))}\nonumber\\
    &=\limsup_{i\to\infty}\frac{N_{i}-i+1}{N_{i}-i+1+\sum_{j=1}^{i-1}R_{j}}\frac{\log(M-k)}{-\log\beta}\nonumber\\
    &=\limsup_{i\to\infty}\left(1-\frac{\sum_{j=1}^{i-1}R_{j}}{N_{i}-i+1+\sum_{j=1}^{i-1}R_{j}}\right)\frac{\log(M-k)}{-\log\beta}. \label{eq: lower bound for cylinder}
\end{align}
Note that the bound is independent of $m$ and choice of ball $B\in I_{m}$. Moreover, for any open set $V$ such that $V\cap\cK\neq \emptyset$ there exists point $x \in \cK$ and radius $r>0$ such that $B(x,r)\subset V$. Since $x\in \cK$ and $r_{n}\to0$ as $n\to \infty$, there exists some $m\in \N$ and $B\in I_{m}$ such that $x \in B$ and $B\subset V$, and so 
\begin{equation*}
    \dimub V\cap \cK \geq \dimub B\cap \cK \overset{\text{\eqref{eq: lower bound for cylinder}}}{\geq} \limsup_{i\to\infty}\left(1-\frac{\sum_{j=1}^{i-1}R_{j}}{N_{i}-i+1+\sum_{j=1}^{i-1}R_{j}}\right)\frac{\log(M-k)}{-\log\beta}. 
\end{equation*}
Therefore, by Lemma~\ref{packing dim lower bound} and using \eqref{eq: M-k size} we obtain the Packing dimension lower bound.
   \end{proof}

\section{Preliminaries for the proof of Theorem~\ref{thm:lagrange winning}} \label{sect:prelim2}

This section contains the necessary lemmas to prove Theorem~\ref{thm:lagrange winning}. We begin with the following well-known \textit{Simplex lemma}, introduced by Davenport and Schmidt (see for example \cite[p.57]{Schmidtbook1980}), which gives us a higher dimensional analogue of the notion that rational points repel each other in $\R$. The version presented below appears in \cite{KTV06}.

\begin{lemma*}[Simplex Lemma \cite{KTV06}] \label{simplex lemma}
Let $E\subset \R^{d}$ be a bounded convex body and suppose that
\begin{equation*}
    \lambda_{d}(E) \leq (d!)^{-1}N^{-(d+1)}\, .
\end{equation*}
Then all rational points $\tfrac{\bp}{q}\in \Q^{d}\cap E$ with $1\leq q< N$ lie on a rational affine hyperplane. 
\end{lemma*}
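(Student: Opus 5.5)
Suppose, for contradiction, that $E$ contains $d+1$ rational points $\tfrac{\bp_0}{q_0},\dots,\tfrac{\bp_d}{q_d}$ with $1\le q_i<N$ that do not lie on a common affine hyperplane. Then they are affinely independent, so they span a non-degenerate $d$-simplex $\Sigma$. Since $E$ is convex and contains all the vertices, $\Sigma\subseteq E$, and hence $\lambda_d(\Sigma)\le\lambda_d(E)$. The plan is to show that $\lambda_d(\Sigma)$ is bounded below by an explicit quantity which contradicts the hypothesis.

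The volume of $\Sigma$ is given by the determinant formula
\begin{equation*}
\lambda_d(\Sigma)=\frac{1}{d!}\left|\det\begin{pmatrix} 1 & \tfrac{\bp_0}{q_0} \\ \vdots & \vdots \\ 1 & \tfrac{\bp_d}{q_d}\end{pmatrix}\right| =\frac{1}{d!\,q_0q_1\cdots q_d}\left|\det\begin{pmatrix} q_0 & \bp_0 \\ \vdots & \vdots \\ q_d & \bp_d\end{pmatrix}\right|,
\end{equation*}
obtained by multiplying the $i$th row by $q_i$. The remaining determinant is an integer, and it is nonzero because the points are affinely independent. Its absolute value is therefore at least $1$. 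Using $q_i<N$, i.e.\ $q_i\le N-1$, gives
\begin{equation*}
\lambda_d(\Sigma)\ge \frac{1}{d!}\prod_{i=0}^{d} q_i^{-1}>\frac{1}{d!}N^{-(d+1)}.
\end{equation*}
This contradicts $\lambda_d(\Sigma)\le\lambda_d(E)\le (d!)^{-1}N^{-(d+1)}$. Hence any $d+1$ such rational points are affinely dependent.

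It remains to pass from "every $d+1$ points are affinely dependent" to "all points lie on one rational affine hyperplane". Let $\Lambda$ be the set of rational points in $E$ with denominator less than $N$. If $\Lambda$ spanned an affine subspace of dimension $d$, we could extract $d+1$ affinely independent points from it, which the previous step rules out. So the affine span of $\Lambda$ has dimension at most $d-1$. This span is generated by rational points, so it is a rational affine subspace. Any rational affine subspace of dimension at most $d-1$ is contained in a rational affine hyperplane (extend it by adding rational directions), and this gives the claim. If $\Lambda$ has at most $d$ points, the same extension argument applies directly.

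The main obstacle is only the strictness of the inequality: the volume bound must come out strictly larger than $(d!)^{-1}N^{-(d+1)}$. This is exactly where the hypothesis $q<N$, rather than $q\le N$, is needed.
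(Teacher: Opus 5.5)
Your proof is correct and is the standard Davenport--Schmidt volume argument from the source the paper cites; the paper itself quotes the lemma without proof. Convexity places the simplex on $d+1$ affinely independent points inside $E$, and clearing denominators in the determinant gives $\lambda_d(\Sigma)\ge (d!\,q_0\cdots q_d)^{-1}>(d!)^{-1}N^{-(d+1)}$. One small fix: drop the aside ``i.e.\ $q_i\le N-1$''. It assumes $N\in\N$, while the paper applies the lemma with real $N$ such as $Q_n$, and it is unnecessary, since $1\le q_i<N$ already gives $\prod_{i}q_i<N^{d+1}$ strictly.
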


The next lemma enables us to say that if we are in a ball that has avoided neighbourhoods of rational points up to some denominator height $Q$ then we are guaranteed to contain a rational point with denominator not much larger than $Q$.

\begin{lemma} \label{lem:eventually see a rational}
Let $B\subset \R^{d}$ be a ball and suppose there exists $R_{0},R_{1}, \Gamma,\gamma>0$ such that
\begin{equation} \label{eq:empty level 1}
    B\cap \bigcup_{0< s < R_{0}} \bigcup_{\br\in \Z^{d}} B\left(\frac{\br}{s}, \Gamma \right) = \emptyset
\end{equation}
and 
\begin{equation} \label{eq:empty level 2}
    B\cap \bigcup_{R_{0} \leq  s < R_{1}} \bigcup_{\br\in \Z^{d}} B\left(\frac{\br}{s}, \gamma s^{-\tfrac{d+1}{d}} \right) = \emptyset .
\end{equation}
Then, there exists rational point $\frac{\bp}{q} \in \Q^{d}\cap \tfrac{3}{4}B$ with
\begin{equation*}
    R_{1}\leq q \leq D_{d,\|\cdot\|}^{d}\max\left\{ \Gamma^{-d}, R_{1}\gamma^{-d}, \left(\tfrac{1}{4}R_{1} r(B)\right)^{-d} \right\},
\end{equation*}
for constant $D_{d,\|\cdot\|}>0$ depending on the norm inducing the ball $B$ only.
\end{lemma}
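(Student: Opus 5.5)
Let $\bx$ denote the centre of $B$ and $r=r(B)$. The plan is to apply Dirichlet's theorem in its uniform form to $\bx$, with a parameter $Q$ chosen so large that the resulting rational point is forced into $\tfrac34 B$. The hypotheses \eqref{eq:empty level 1} and \eqref{eq:empty level 2} are then used only to push its denominator up to at least $R_{1}$.

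\textbf{Step 1 (Dirichlet).} Recall the uniform statement underlying the constant $D_{d,\|\cdot\|}$: for every $Q\geq 1$ there exist $\bp\in\Z^{d}$ and $1\leq q\leq Q$ with
\[
\|q\bx-\bp\|\leq D_{d,\|\cdot\|}\,Q^{-1/d}.
\]
This follows from Minkowski's convex body theorem or the pigeonhole principle, possibly after enlarging $D_{d,\|\cdot\|}$ by a norm-dependent factor. Dividing by $q$ gives $\|\bx-\bp/q\|\leq D_{d,\|\cdot\|}\,q^{-1}Q^{-1/d}$. Set
\[
Q:=D_{d,\|\cdot\|}^{d}\max\{\Gamma^{-d},\,R_{1}\gamma^{-d},\,(\tfrac14 R_{1}r)^{-d}\},
\]
and let $\bp/q$ be the point produced. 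Then $q\leq Q$ is automatic, and it remains to show $q\geq R_{1}$ and $\bp/q\in\tfrac34B$.

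\textbf{Step 2 (ruling out small $q$).} Suppose $q<R_{0}$. Since $Q\geq D^{d}\Gamma^{-d}$, we have $\|\bx-\bp/q\|\leq DQ^{-1/d}\leq\Gamma$, so $\bx\in B(\bp/q,\Gamma)$. Hence $B$ meets this ball, contradicting \eqref{eq:empty level 1}.

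Suppose instead $R_{0}\leq q<R_{1}$. Using $Q\geq D^{d}R_{1}\gamma^{-d}$, we get
\[
\|\bx-\bp/q\|\leq D\,q^{-1}Q^{-1/d}\leq \gamma\,q^{-1}R_{1}^{-1/d}<\gamma\,q^{-1-1/d},
\]
where the last inequality uses $q<R_{1}$. So $\bx$ lies in a forbidden ball of \eqref{eq:empty level 2}, again a contradiction. Therefore $q\geq R_{1}$.

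\textbf{Step 3 (localisation).} Now use $q\geq R_{1}$ together with $Q\geq D^{d}(\tfrac14R_{1}r)^{-d}$:
\[
\|\bx-\bp/q\|\leq D\,R_{1}^{-1}Q^{-1/d}\leq \tfrac14 r.
\]
Hence $\bp/q\in\tfrac34B$ (indeed in $\tfrac14 B$).

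\textbf{Main obstacle.} The only delicate point is the bookkeeping. The intervals $0<s<R_{0}$ and $R_{0}\leq s<R_{1}$ must cover every possible denominator below $R_{1}$, and the strict versus non-strict inequalities at the boundaries must line up with the inclusions above. If the balls in the hypotheses are open, one may need to shrink by an arbitrarily small factor, or observe that the inequality $\gamma q^{-1}R_{1}^{-1/d}<\gamma q^{-1-1/d}$ is already strict. Beyond this, one must confirm that the uniform Dirichlet constant is the same $D_{d,\|\cdot\|}$ as in \eqref{eq: lagrange bounds}, or else rename it as a constant depending only on the norm.
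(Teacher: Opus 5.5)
Your proposal is correct and is essentially the paper's proof: apply Dirichlet's theorem with $N=D_{d,\|\cdot\|}^{d}\max\{\Gamma^{-d},R_{1}\gamma^{-d},(\tfrac14 R_{1}r(B))^{-d}\}$ at a point of $\tfrac12 B$, use the two emptiness hypotheses to exclude $q<R_{0}$ and $R_{0}\leq q<R_{1}$, and then use $q\geq R_{1}$ to place $\tfrac{\bp}{q}$ within $\tfrac14 r(B)$ of that point. The differences are cosmetic: you work at the centre of $B$ where the paper allows any $\bx\in\tfrac12 B$, and the paper states Dirichlet with a strict inequality, which removes the open-versus-closed ball issue you flagged in the case $q<R_{0}$.
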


\begin{proof}
    Dirichlet's Theorem states there exists constant $D_{d,\|\cdot\|}>0$ depending on the norm inducing $B$ only, such that for any $\bx \in \R^{d}$ and any $N\in \R_{+}$ there exists $\frac{\bp}{q}\in \Q^{d}$ such that
    \begin{equation} \label{eq:dirichlet thm}
        \left\| \bx -\tfrac{\bp}{q} \right\|< D_{d,\|\cdot\|} q^{-1} N^{-\tfrac{1}{d}} \quad \text{ and } \quad 1\leq q\leq N.
    \end{equation}
    Take any $\bx \in \tfrac{1}{2}B$, $N=D_{d,\|\cdot\|}^{d}\max\left\{ \Gamma^{-d}, R_{1}\gamma^{-d}, \left(\tfrac{1}{4}R_{1} r(B)\right)^{-d} \right\}$ and consider the solution $\tfrac{\bp}{q}\in \Q^{d}$ to \eqref{eq:dirichlet thm}.
    Observe that:
    \begin{itemize}
        \item[-] if $1\leq q < R_{0}$ then
        \begin{equation*}
            \left\| \bx -\tfrac{\bp}{q} \right\|< D_{d,\|\cdot\|} q^{-1} (D_{d,\|\cdot\|}^{d}\Gamma^{-d})^{-\tfrac{1}{d}} < \Gamma,
        \end{equation*}
        implying $B\cap B(\tfrac{\bp}{q}, \Gamma) \neq \emptyset$ which is false by \eqref{eq:empty level 1}.
        \item[-] if $R_{0}\leq q < R_{1}$ then $q^{-\tfrac{1}{d}} > R_{1}^{-\tfrac{1}{d}}$ and so
        \begin{equation*}
            \left\| \bx -\tfrac{\bp}{q} \right\|< D_{d,\|\cdot\|} q^{-1} (D_{d,\|\cdot\|}^{d}R_{1} \gamma^{-d})^{-\tfrac{1}{d}} < \gamma q^{-\tfrac{d+1}{d}},
        \end{equation*}
        implying $B\cap B(\tfrac{\bp}{q}, \gamma q^{-\tfrac{d+1}{d}})\neq \emptyset$ which is false by \eqref{eq:empty level 2}.
    \end{itemize}
    Therefore $R_{1}\leq q \leq N$. So
    \begin{equation*}
        \left\| \bx -\tfrac{\bp}{q} \right\|< D_{d,\|\cdot\|} q^{-1} D_{d,\|\cdot\|}^{-1}\tfrac{1}{4}R_{1} r(B)\leq \tfrac{1}{4} r(B),
    \end{equation*}
    and since $\bx \in \tfrac{1}{2}B$ the triangle inequality yields $\tfrac{\bp}{q}\in \tfrac{3}{4}B$.
\end{proof}

\subsection{A few geometric lemmas}

We will need the following easy geometric lemmas on Euclidean balls.

\begin{lemma} \label{lem: key geometric lem}
Let $\tfrac{1}{2}>A>B>0$. Take any $\by \in \partial B(0,1)$ and any affine hyperplane $\cH$ such that 
\begin{enumerate}[i)]
    \item \label{eq: y in hyper} $\by \in \cH$,
    \item \label{eq: hype too steep} $\inf_{z \in \cH\cap B(\by, A)} \|\bz\|_{2} >1-B$.
\end{enumerate}
Then, letting $\bv_{\cH}$ denote a unit vector normal to $\cH$ with sign so that $\langle \bv_{\cH}, \by\rangle \geq 0$, we have that
\begin{equation*}
    \langle \by, \bv_{\cH} \rangle \geq \frac{A-B}{A+B}.
\end{equation*}
\end{lemma}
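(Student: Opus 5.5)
The plan is a direct computation in an orthonormal frame adapted to $\cH$, followed by a short case analysis. Write $t:=\langle \by,\bv_{\cH}\rangle\in[0,1]$ and decompose $\by=t\bv_{\cH}+\bu$ with $\bu\perp\bv_{\cH}$, so that $s:=\|\bu\|_{2}=\sqrt{1-t^{2}}$. Since $\by\in\cH$ by \ref{eq: y in hyper}, every point of $\cH$ has the form $\bz=\by+\bw$ with $\bw\perp\bv_{\cH}$. Then
\begin{equation*}
\|\bz\|_{2}^{2}=1+2\langle \bu,\bw\rangle+\|\bw\|_{2}^{2},
\end{equation*}
because $\langle \by,\bw\rangle=\langle\bu,\bw\rangle$.

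If $s=0$ then $t=1$ and there is nothing to prove, so assume $s>0$. Test hypothesis \ref{eq: hype too steep} on the points $\bw=-a\,\bu/s$ for $0\le a<A$, which give $\|\bz\|_{2}^{2}=1-2as+a^{2}$. By \ref{eq: hype too steep}, together with a limiting argument as $a\uparrow A$ (the infimum condition passes to the non-strict inequality on the closure), we get
\begin{equation*}
1-2as+a^{2}\ \ge\ (1-B)^{2}=1-2B+B^{2}\qquad\text{for all } 0\le a\le A .
\end{equation*}
Here we use that $1-B>0$, so comparing squares is legitimate. This is the only information extracted from the hypotheses, and it amounts to an upper bound on $s$.

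Next split according to where the parabola $a\mapsto 1-2as+a^{2}$ attains its minimum on $[0,A]$.
\begin{enumerate}[(a)]
\item If $s\le A$, take $a=s$. This gives $1-s^{2}\ge(1-B)^{2}$, i.e.\ $s^{2}\le 2B-B^{2}$.
\item If $s>A$, take $a=A$. This gives $s\le \frac{A^{2}+2B-B^{2}}{2A}$.
\end{enumerate}
In either case we obtain an explicit bound $s^{2}\le\sigma(A,B)$. The desired inequality $t\ge\frac{A-B}{A+B}$ is equivalent, since $t\ge 0$ and $A>B$, to $t^{2}\ge\frac{(A-B)^{2}}{(A+B)^{2}}$, i.e.\ to
\begin{equation*}
s^{2}=1-t^{2}\ \le\ \frac{4AB}{(A+B)^{2}} .
\end{equation*}
So the proof reduces to checking $\sigma(A,B)\le\frac{4AB}{(A+B)^{2}}$ in each case, for $0<B<A<\tfrac12$.

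The main obstacle is this final elementary but slightly fiddly algebraic inequality. In case (a) it reads $2B-B^{2}\le 4AB/(A+B)^{2}$, i.e.\ $(2-B)(A+B)^{2}\le 4A$. Since $A+B<2A$, the left-hand side is less than $2\cdot 4A^{2}=8A^{2}$, and $8A^{2}\le 4A$ holds when $A\le\tfrac12$; this is exactly where $A<\tfrac12$ is used.

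In case (b) one must show $(A^{2}+2B-B^{2})^{2}(A+B)^{2}\le 16A^{3}B$. This need not hold on its own. The plan is to combine it with the case hypothesis $s>A$, which is only consistent when $A^{2}+2B-B^{2}>2A^{2}$, i.e.\ $A^{2}<2B-B^{2}$, meaning $B$ is comparable to $A^{2}$ or larger. The first thing to try is substituting $B\ge A^{2}/2$-type lower bounds and expanding. If that stalls, a fallback is to note that in this regime the trivial bound $s^{2}\le 1$ may already be compared against $4AB/(A+B)^{2}$, which approaches $1$ when $B$ is close to $A$. Alternatively, one can test hypothesis \ref{eq: hype too steep} along the single direction of steepest descent at the optimal radius, which avoids the case split altogether.
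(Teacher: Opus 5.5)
Your setup and case (a) are correct, and you are following the same route as the paper. You use the same test direction $-\bu/s$ towards the foot $t\bv_{\cH}$ of the perpendicular from the origin, and the same split according to whether $s=\|\by-t\bv_{\cH}\|_2$ is below or above $A$. Your limiting argument at $a=A$ is also more careful than the paper about the open ball.

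\textbf{The gap.} Case (b) carries all the content of the lemma, and you do not prove it. On the region $A^2<2B-B^2$ where case (b) can occur, your reduced inequality $(A^2+2B-B^2)^2(A+B)^2\le 16A^3B$ is true, but it is tight: at $B=A$ both sides equal $16A^4$. Plugging in a lower bound such as $B\ge A^2/2$ and expanding will not close it unless you factor out $A-B$ and control the cofactor. Your fallback via $s^2\le 1$ cannot work at all, since $4AB/(A+B)^2<1$ whenever $B<A$. The ``steepest descent at the optimal radius'' suggestion is just your case split again. As written, the proposal stops short of the conclusion.

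\textbf{The missing idea.} Bound the norm of the test point by convexity instead of computing it exactly. In case (b) we have $A/s<1$, so
\begin{equation*}
\bx=\by-\tfrac{A}{s}\bu=\bigl(1-\tfrac{A}{s}\bigr)\by+\tfrac{A}{s}\,t\bv_{\cH}
\end{equation*}
is a convex combination of $\by$ and the foot $t\bv_{\cH}\in\cH$. Hence $\|\bx\|_2\le 1-\frac{A}{s}(1-t)$. Combining this with $\|\bx\|_2\ge 1-B$ gives $B\ge \frac{A(1-t)}{s}\ge \frac{A(1-t)}{1+t}$, using $s\le 1\le 1+t$. This rearranges to $t\ge\frac{A-B}{A+B}$, which is precisely the paper's argument.

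Equivalently, in your own notation, one checks that $1-2As+A^2\le\bigl(1-A\tfrac{1-t}{s}\bigr)^2$ whenever $s\ge A$. That is exactly the algebraic fact your case (b) was missing. It even yields the stronger bound $t\ge\frac{A^2-B^2}{A^2+B^2}$.
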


\begin{proof}

Let $\bz=\langle \by, \bv_{\cH}\rangle \bv_{\cH}$. Note that
\begin{equation*}
    \|\by-\bz\|_{2}=(1-\langle \by, \bv_{\cH}\rangle^{2})^{1/2}.
\end{equation*}
If $\|\by-\bz\|_{2}<A$, then $\bz \in B(\by, A)$ and so by \eqref{eq: hype too steep}, 
\begin{equation*}
    \|\bz\|_{2}=\langle \by, \bv_{\cH}\rangle>1-B>\frac{A-B}{A+B}
\end{equation*}
in which case we are done. So assume $\|\by-\bz\|_{2}\geq A$. Then take
\begin{equation*}
    \bx=\left( 1-\frac{A}{\|\by-\bz\|_{2}}\right)\by +\frac{A}{\|\by-\bz\|_{2}}\bz\in \cH\cap B(\by,A).
\end{equation*}
Moreover, 
\begin{equation*}
    \|\bx\|_{2}\leq 1-\frac{A}{\|\by-\bz\|_{2}}(1-\langle\by, \bv_{\cH}\rangle)\leq 1-A\frac{1-\langle\by, \bv_{\cH}\rangle}{1+\langle\by, \bv_{\cH}\rangle}.
\end{equation*}
Hence, by \eqref{eq: hype too steep}, we have that
\begin{equation*}
        B>A\frac{1-\langle \by, \bv_{\cH} \rangle}{1+\langle \by, \bv_{\cH} \rangle },
\end{equation*}
and so 
\begin{equation*}
        \langle \by, \bv_{\cH}\rangle>\frac{A-B}{A+B}.
\end{equation*}
completing the proof. 
\end{proof}

\begin{lemma} \label{lem: colliding hyperplanes}
Fix some $\tfrac{1}{3}>C>0$. Let $\cH$ and $\cH'$ be two hyperplanes of $\R^{d}$ described by normal unit vectors $\bv, \bv'\in \R^{d}$. Let $\bx \in \cH$ and $r>0$. Then there exists point $\by\in \cH$ such that
\begin{equation*}
    B(\by, Cr)\subset B(\bx,r) \quad \text{ and } \quad B(\by, Cr)\cap \Delta(\cH', Cr)=\emptyset,
\end{equation*}
provided that
\begin{equation*}
    \sin \theta >\frac{2C}{1-C},
\end{equation*}
where $\theta\in (0,\tfrac{\pi}{2}]$ denotes the angle between $\bv$ and $\bv'$.
\end{lemma}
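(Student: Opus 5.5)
We will reduce the statement to a two-dimensional computation in the plane spanned by $\bv$ and $\bv'$. Translating, we may assume $\bx=0$, so $\cH=\bv^{\perp}$ and $\cH'=\{\bz:\langle \bz,\bv'\rangle = t\}$ for some $t\in\R$. The hypothesis $\sin\theta>0$ means $\bv'$ is not parallel to $\bv$. So the orthogonal projection $\bu$ of $\bv'$ onto $\cH$ is nonzero, with $\|\bu\|_2=\sin\theta$. Let $\bw=\bu/\|\bu\|_2\in\cH$ be the corresponding unit vector; for points $\by\in\cH$ we then have $\langle \by,\bv'\rangle=\langle \by,\bu\rangle$.

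We will look for $\by$ on the line $\{s\bw: s\in\R\}\subset\cH$ with $|s|\le (1-C)r$. The containment $B(\by,Cr)\subset B(0,r)$ holds automatically once $\|\by\|_2+Cr\le r$. For the second condition, $B(\by,Cr)\cap\Delta(\cH',Cr)=\emptyset$ is equivalent to $\operatorname{dist}(\by,\cH')>2Cr$. Since $\bv'$ is a unit normal, $\operatorname{dist}(\by,\cH')=|\langle \by,\bv'\rangle - t| = |s\sin\theta - t|$. As $s$ ranges over $[-(1-C)r,(1-C)r]$, the quantity $s\sin\theta$ sweeps out the interval $[-(1-C)r\sin\theta,(1-C)r\sin\theta]$, which has length $2(1-C)r\sin\theta$.

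We choose whichever endpoint $s=\pm(1-C)r$ lies farther from $t/\sin\theta$. Then $|s\sin\theta-t|\ge (1-C)r\sin\theta$, because the farther endpoint of an interval of length $L$ is at distance at least $L/2$ from any real number. The hypothesis $\sin\theta>\frac{2C}{1-C}$ gives $(1-C)r\sin\theta>2Cr$, which yields the required strict separation. The boundary conventions need a little care: $\Delta$ uses $\le$ and the balls may be open or closed. Strict inequality in the hypothesis handles either case, so the argument is insensitive to these conventions.

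There is no real obstacle here. The only points needing attention are the identity $\operatorname{dist}(\by,\cH')=|\langle\by,\bv'\rangle-t|$ for unit $\bv'$, and the observation that on $\cH$ the functional $\langle\cdot,\bv'\rangle$ has gradient of norm exactly $\sin\theta$. The assumption $C<\tfrac13$ is only needed to make $1-C>0$ and the hypothesis satisfiable, since $\frac{2C}{1-C}<1$.
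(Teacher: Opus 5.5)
Your proof is correct and is essentially the paper's argument. Your unit vector $\mathbf{w}$ is, up to sign, the paper's $\bn$, the direction in $\cH$ orthogonal to $L=\cH\cap\cH'$. Choosing the farther endpoint $s=\pm(1-C)r$ corresponds to the paper's sign choice making $m\geq 0$, i.e.\ moving a distance about $(1-C)r$ away from $L$, along which the distance to $\cH'$ grows at rate $\sin\theta$. Your version is slightly tidier: it replaces the paper's ``hardest case $m=0$'' remark with the explicit farther-endpoint bound, and it checks the open/closed ball conventions.
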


\begin{proof}
    Let $L= \cH \cap \cH'$. These intersect since $\theta \neq 0$. Note that $L$ partitions $\cH$. Let $\bn$ denote the unit vector orthogonal to $L$ lying inside $\cH$, and choose the sign so that $\bx=\bz+m \bn$ for some $m\geq 0$ and $\bz \in L$ (choose either sign if $m=0$). Then, provided there exists $t\in \R_{\geq m}$ such that
    \begin{equation} \label{eq: strict demand}
        t-m<(1-C)r \quad \text{ and } \quad t\sin \theta > 2Cr,
    \end{equation}
    then $\by=\bz+t\bn$ satisfies the lemma, since $$\|\by-\bx\|_{2}=t-m<(1-C)r$$
    so $B(\by,Cr)\subset B(\bx,r)$, and similarly $d(\by, \cH')= t\sin \theta > 2Cr$ so $B(\by, Cr) \cap \Delta(\cH',Cr) = \emptyset$.

    Clearly it is hardest to satisfy the equations \eqref{eq: strict demand} when $m=0$. In this case combining the two equations of \eqref{eq: strict demand} gives $\sin\theta > \tfrac{2C}{1-C}$.
\end{proof}

\section{Proof of Theorem~\ref{thm:lagrange winning}}
\label{sec: proof of main}

Set $\beta>0$ satisfying \eqref{eq: beta parameter} and take some $\varepsilon>0$ satisfying \eqref{eq: beta epsilon parameter}. Alice picks a sequence of pairs $(N_{i},R_{i})_{i \in \N}$ satisfying \eqref{eq: NR parameter}. To start the game Bob picks any ball $B_{0}$ with $r(B_{0})\geq \rho$. We now describe the two strategies Alice employs depending on whether $n\in (N_{i})_{i\in \N}$.

Throughout the proof $B_{i}$ will denote Bobs $i$th turn. Note by the rules of the game
\begin{equation*}
    r(B_{i})=\beta^{i+\sum_{j=1}^{k}(R_{j}-1)}r(B_{0}) \quad \text{ for } \quad N_{k}<i\leq N_{k+1}.
\end{equation*}
We will also set
\begin{equation*}
    Q_{i}:=(d!)^{-\tfrac{1}{d+1}} \lambda_{d}(2B_{i})^{-\tfrac{1}{d+1}}
\end{equation*}
throughout. On exceptional turns we also need intermediate balls. For each $i\in \N$ and $k\in \R_{+}$ let $\widetilde{B}_{i,k}(\bx)$ denote a ball with centre $\bx$ and radius
\begin{equation*}
    r(\widetilde{B}_{i,k})=\beta^{k}r(B_{N_{i}}).
\end{equation*}
Alongside these balls correspondingly define
\begin{equation*}
    \widetilde{Q}_{i,k}:= (d!)^{-\tfrac{1}{d+1}} \lambda_{d}(2\widetilde{B}_{i,k})^{-\tfrac{1}{d+1}}.
\end{equation*}
It is clear that
\begin{equation*}
    Q_{i+1}=\beta^{-\tfrac{d}{d+1}}Q_{i} \quad \text{for}\, \,  i\not\in (N_{i})_{i\in \N} \quad \text{ and } \quad \widetilde{Q}_{i,k}=\beta^{-k\tfrac{d}{d+1}}Q_{i},
\end{equation*}
so $\widetilde{Q}_{i,R_{i}}=Q_{N_{i}+1}$.

\subsection{Regular turns} Suppose $n\not\in (N_{i})_{i\in \N}$. By \eqref{eq: NR parameter} $0 \not \in (N_{i})_{i\in \N}$, so we begin with Bobs $0$th ball $B_{0}$. Alice applies the Simplex Lemma to the ball $2B_{0}$ to see that all rational points $\tfrac{\bp}{q}\in \Q^{d}\cap 2B_{0}$ with
\begin{equation*}
    1\leq q < Q_{0}
\end{equation*}
lie on a rational affine hyperplane, say $\cH_{0}$. Alice selects $A_{0}=\Delta\left(\cH_{0},\beta r(B_{0}) \right)$. Note that
\begin{equation} \label{eq: 0th level removal}
    A_{0} \supset B_{0} \cap\bigcup_{1\leq q< Q_{0}}\bigcup_{\bp\in \Z^{d}} B\left(\frac{\bp}{q}, \beta r(B_{0}) \right).
\end{equation}
For all other regular turns Alice's strategy is the same as above. That is, suppose Bob has chosen a ball $B_{n}$, then Alice will respond by applying the Simplex Lemma to $2B_{n}$ to see that all rational points $\tfrac{\bp}{q}\in \Q^{d}\cap 2B_{n}$ with
\begin{equation*}
    1\leq q < Q_{n}
\end{equation*}
lie on some rational affine hyperplane, say $\cH_{n}$. Alice concludes her turn by picking $A_{n}=\Delta\left(\cH_{n},\beta r(B_{n})\right)$. Again, note
\begin{equation*}
    A_{n}\supset B_{n} \cap \bigcup_{1\leq q< Q_{n}}\bigcup_{\bp\in \Z^{d}}B\left(\frac{\bp}{q}, \beta r(B_{n}) \right).
\end{equation*}
If at any point there are no rational points $\tfrac{\bp}{q}\in \Q^{d}\cap 2B_{n}$ with $q<Q_{n}$ Alice plays some arbitrary affine hyperplane. Set $$c_{1}=(d!)^{-\tfrac{1}{d}}2^{-1}\lambda_{d}(B(\mathbf{0},1))^{-\tfrac{1}{d}}$$
and note that 
\begin{equation*} \label{eq: Q_n def}
    r(B_{n})=c_{1} Q_{n}^{-\tfrac{d+1}{d}}=c_{1} \beta Q_{n-1}^{-\tfrac{d+1}{d}}, \qquad n-1 \not \in (N_{i})_{i\in \N}.
\end{equation*}
using that $r(B_{n-1})=\beta^{-1}r(B_{n})$ as per the rules of the game between regular turns. Hence, by the rules of the game, for any ball Bob chooses after Alice's turn we have
\begin{equation*}
    B_{n+1} \subset B_{n}\setminus \Delta(\cH_{n}, c_{1}\beta^{2} Q_{n-1}{^{-\frac{d+1}{d}}})
\end{equation*}
and so
\begin{equation} \label{eq: regular turn avoid rationals}
    B_{n+1}\cap \bigcup_{Q_{n-1}\leq q< Q_{n}}\bigcup_{\bp\in \Z^{d}}B\left( \tfrac{\bp}{q}, c_{1}\beta^{2}q^{-\frac{d+1}{d}}\right)=\emptyset, \qquad \text{ provided }\,  n\not \in (N_{i})_{i \in \N}. 
\end{equation}

\begin{remark}
    So far this is a standard argument to show that the set of $d$-dimensional badly approximable points is hyperplane absolute winning, see \cite[Theorem 2.5]{BroderickFishmanKleinbockReichWeiss2012} for more details. We include it here for completeness and notational purposes.
\end{remark}

\subsection{Exceptional turns} We now describe the strategy of Alice on turns when $n \in (N_{i})_{i\in \N}$. Suppose Bob has just played ball $B_{N_{i}}$. For each $i\in \N$ we assume that
\begin{equation} \label{eq: assumption 1}
    B_{N_{i}}\cap  \bigcup_{\widetilde{Q}_{i-1, R_{i}-1}\leq q < Q_{N_{i}-1}}\bigcup_{\bp\in \Z^{d}} B\left( \frac{\bp}{q}, c_{1}\beta^{2} q^{-\frac{d+1}{d}} \right) = \emptyset,
\end{equation}
and 
\begin{equation} \label{eq: assumption 2}
    B_{N_{i}}\cap  \bigcup_{1\leq q < \widetilde{Q}_{i-1,R_{i}-1}}\bigcup_{\bp\in \Z^{d}} B\left( \frac{\bp}{q}, \delta_{i-1} \right) = \emptyset,
\end{equation}
for some constant $\delta_{i-1}>0$ independent of $N_{i}$ to be defined. This assumption will be shown to be true at the end of the exceptional turn.

As our base case note this is true for $N_{1}$ if we set $N_{0}=0$. Indeed, we have that
\begin{equation*}
    B_{N_{1}}\cap \bigcup_{1\leq q < Q_{0}} \bigcup_{\bp\in \Z^{d}} B\left(\frac{\bp}{q}, \beta r(B_{0}) \right)= \emptyset \, , 
\end{equation*}
from the fact that $B_{N_{1}}\subset B_{1}$, that $B_{1}\cap A_{0}=\emptyset$ and \eqref{eq: 0th level removal}. So \eqref{eq: assumption 2} is satisfied if we set $\delta_{0}=\beta r(B_{0})$, which is independent of $N_{1}$. We also have \eqref{eq: assumption 1}. To see why suppose there exists some $\frac{\bp}{q}$ with $Q_{0}\leq q<Q_{N_{1}-1}$ such that
 \begin{equation*}
    B_{N_{1}}\cap B\left( \tfrac{\bp}{q}, c_{1}\beta^{2} q^{-\tfrac{d+1}{d}}\right) \neq \emptyset. 
 \end{equation*}
 There exists unique $1\leq m \leq N_{1}-1$ such that $Q_{m-1}\leq q < Q_{m}$. Since $B_{N_{1}}\subseteq B_{m+1}$ this implies
  \begin{equation*} \label{eq:contradiction}
    B_{m+1}\cap B\left( \tfrac{\bp}{q}, c_{1}\beta^{2} q^{-\tfrac{d+1}{d}}\right) \neq \emptyset. 
 \end{equation*}
 However, this is false by \eqref{eq: regular turn avoid rationals}, since $m\leq N_{1}-1$ so $m\not \in (N_{i})_{i \in \N}$. Therefore \eqref{eq: assumption 1} is true in the base case. 
\subsubsection{Finding a target surface} \label{sec: target surface}
We begin by finding a surface, denoted $S_{i}$, which will be composed of a collection of surfaces of the form $\partial B(\tfrac{\bp}{q},\varepsilon q^{-(d+1)/d})$. By assumption \eqref{eq: assumption 1}-\eqref{eq: assumption 2} we have that Lemma~\ref{lem:eventually see a rational} is applicable to $B_{N_{i}}$ with parameters
\begin{equation*}
    \Gamma=\delta_{i-1}, \quad \gamma=c_{1}\beta^{2}, \quad R_{0}=\widetilde{Q}_{i-1,R_{i-1}}, \quad R_{1}=Q_{N_{i}-1},
\end{equation*}
therefore, there exists some rational point $\frac{\bp}{q}\in \Q^{d}\cap \tfrac{3}{4}B_{N_{i}}$ with denominator
\begin{align*}
    Q_{N_{i}-1}\leq q < &D_{d,\|\cdot\|}^{d} \max \left\{ \delta_{i-1}^{-d}, Q_{N_{i}-1}c_{1}^{-d}\beta^{-2d}, 4^{d}Q_{N_{i}-1}^{-d} r(B_{N_{i}})^{-d} \right\} \\
    \leq & \widetilde{Q}_{i,2(d+1)}.
\end{align*}
The second inequality is because:
\begin{itemize}
    \item By our inductive assumption $\delta_{i-1}$ is independent of $N_{i}$. Using \eqref{eq: NR parameter} we see that $N_{i}$ is significantly larger than $N_{i-1}$ so that $$D_{d,\|\cdot\|}^{d}\delta_{i-1}^{-d}<Q_{N_{i}}<\widetilde{Q}_{i,2(d+1)}.$$

    \item Since $\beta<\left(\frac{c_{1}}{D_{d,\|\cdot\|_{2}}}\right)^{d+1}$, where $c=c_{1}$, by \eqref{eq: beta parameter} we have that
    \begin{equation*}
        D_{d,\|\cdot\|}^{d}Q_{N_{i}-1}\beta^{-2d}c_{1}^{-d} < Q_{N_{i}-1}\beta^{-2(d+1)\tfrac{d}{d+1}-\tfrac{d}{d+1}}=\widetilde{Q}_{i,2(d+1)}.
    \end{equation*}

    \item Note $r(B_{N_{i}})=c_{1} \beta Q_{N_{i}-1}^{-\tfrac{d+1}{d}}$ and by \eqref{eq: beta parameter} $\beta^{\tfrac{d}{d+1}}<4^{-d}$, so
    \begin{equation*}
        D_{d,\|\cdot\|}^{d}4^{d} Q^{-d}_{N_{i}-1}r(B_{N_{i}})^{-d}<\beta^{-\tfrac{d}{d+1}} Q_{N_{i}-1}c_{1}^{-d} \beta^{-d}<\beta^{-(d+3)\tfrac{d}{d+1}}Q_{N_{i}-1}< \widetilde{Q}_{i,2(d+1)}.
    \end{equation*}
\end{itemize}
Consider the set 
\begin{equation*}
    L_{0}:=\left\{ B\left(\tfrac{\bp}{q}, c_{1}\beta^{2} q^{-\tfrac{d+1}{d}}\right)\right\}_{\bp \in \Z^{d}, Q_{N_{i}-1}\leq q<Q_{N_{i}}}.
\end{equation*}
If there exists a ball from $L_{0}$, with rational centre say $\tfrac{\bp_{i}}{q_{i}}$, such that
\begin{equation} \label{eq:1}
    (1-3\beta)B_{N_{i}}\cap B\left(\tfrac{\bp_{i}}{q_{i}}, c_{1}\beta^{2} q^{-\tfrac{d+1}{d}}\right)\neq \emptyset
\end{equation}
then take the ball $$\widetilde{B}_{i,1}\left(\tfrac{\bp_{i}}{q_{i}}\right):=B\left(\tfrac{\bp_{i}}{q_{i}}, \beta r(B_{N_{i}})\right)\supseteq B\left(\tfrac{\bp_{i}}{q_{i}}, c_{1}\beta^{2} q^{-\tfrac{d+1}{d}}\right)\supset B\left(\tfrac{\bp_{i}}{q_{i}}, \varepsilon q^{-\tfrac{d+1}{d}}\right)$$
in the next step, with the last containment due to \eqref{eq: beta parameter}. Note that by the triangle inequality $\widetilde{B}_{i,1}\left(\tfrac{\bp_{i}}{q_{i}}\right)\subset B_{N_{i}}$ and \eqref{eq: assumption 1} we have
\begin{equation*}
    \widetilde{B}_{i,1}\left(\tfrac{\bp_{i}}{q_{i}}\right) \cap \bigcup_{\widetilde{Q}_{i-1,R_{i-1}}\leq q< Q_{N_{i}-1}} \bigcup_{\bp\in \Z^{d}}B\left(\tfrac{\bp}{q}, c_{1}\beta^{2} q^{-\tfrac{d+1}{d}}\right)=\emptyset
\end{equation*}
by \eqref{eq: assumption 1}. Suppose no such ball in $L_{0}$ satisfies \eqref{eq:1}. Then it follows that
\begin{equation*}
    (1-3\beta)B_{N_{i}} \cap \bigcup_{Q_{N_{i-1}}\leq q< Q_{N_{i}}} \bigcup_{\bp\in \Z^{d}}B\left(\tfrac{\bp}{q}, c_{1}\beta^{2} q^{-\tfrac{d+1}{d}}\right)=\emptyset.
\end{equation*}
Iterate the above process as follows; define for $k\geq 0$
\begin{equation*}
    L_{k+1}:=\left\{ B\left(\tfrac{\bp}{q}, c_{1}\beta^{2} q^{-\tfrac{d+1}{d}}\right) \right\}_{\bp\in \Z^{d}, \widetilde{Q}_{i,k}\leq q < \widetilde{Q}_{i,k+1}}
\end{equation*}
and suppose that 
\begin{equation*}
    \left(1-3\sum_{i=1}^{k+1}\beta^{i}\right)B_{N_{i}} \cap \bigcup_{Q_{N_{i-1}}\leq q< \widetilde{Q}_{i,k}} \bigcup_{\bp\in \Z^{d}}B\left(\tfrac{\bp}{q}, c_{1}\beta^{2} q^{-\tfrac{d+1}{d}}\right)=\emptyset.
\end{equation*}
If there exists a ball from $L_{k+1}$, with rational centre say $\tfrac{\bp_{i}}{q_{i}}$, such that
\begin{equation*}
    \left(1-3\sum_{i=1}^{k+1}\beta^{i}\right)B_{N_{i}}\cap B\left(\tfrac{\bp_{i}}{q_{i}}, c_{1}\beta^{2} q^{-\tfrac{d+1}{d}}\right)\neq \emptyset
\end{equation*}
then take the ball
$$\widetilde{B}_{i,k+1}\left(\tfrac{\bp_{i}}{q_{i}}\right):=B\left(\tfrac{\bp_{i}}{q_{i}}, \beta^{k+1} r(B_{N_{i}})\right)$$
in the next step. By the triangle inequality $\widetilde{B}_{i,k+1}\left(\tfrac{\bp_{i}}{q_{i}}\right)\subset \left(1-3\sum_{i=1}^{k}\beta^{i}\right)B_{N_{i}}$ and so
\begin{equation*}
    \widetilde{B}_{i,k+1}\left(\tfrac{\bp_{i}}{q_{i}}\right) \cap \bigcup_{Q_{N_{i-1}}\leq q< \widetilde{Q}_{i,k}} \bigcup_{\bp\in \Z^{d}}B\left(\tfrac{\bp}{q}, c_{1}\beta^{2} q^{-\tfrac{d+1}{d}}\right)=\emptyset.
\end{equation*}
This process must terminate. To see why, note that by \eqref{eq: beta parameter} we have $\beta<\frac{1}{13}$, and so
\begin{equation*}
    1-3\sum_{i=1}^{\infty}\beta^{i}>\frac{3}{4}.
\end{equation*}
The application of Lemma~\ref{lem:eventually see a rational} to $B_{N_{i}}$ told us there exists a rational $$\frac{\bp}{q}\in \frac{3}{4}B_{N_{i}} \subset \left(1-3\sum_{i=1}^{\infty}\beta^{i}\right)B_{N_{i}}$$ with $q\leq \widetilde{Q}_{i,2(d+1)}$.
Therefore, there exists some $0\leq k_{i} \leq 2(d+1)$ and $\tfrac{\bp_{i}}{q_{i}}\in \Q^{d}$ with $$\widetilde{Q}_{i,k_{i}-1}\leq q_{i}< \widetilde{Q}_{i,k_{i}}$$ such that the ball $\widetilde{B}_{i,k_{i}}\left(\tfrac{\bp_{i}}{q_{i}}\right)=B\left(\tfrac{\bp_{i}}{q_{i}}, \beta^{k_{i}}r(B_{N_{i}})\right)$ satisfies
\begin{align}
    \widetilde{B}_{i,k_{i}}\left(\tfrac{\bp_{i}}{q_{i}}\right)&\subset B_{N_{i}} \label{eq:contained in big ball}\\
    \widetilde{B}_{i,k_{i}}\left(\tfrac{\bp_{i}}{q_{i}}\right)&\supset B\left(\tfrac{\bp_{i}}{q_{i}}, c_{1}\beta^{2}q_{i}^{-\tfrac{d+1}{d}}\right) \nonumber\\
    \widetilde{B}_{i,k_{i}}\left(\tfrac{\bp_{i}}{q_{i}}\right) &\cap \bigcup_{Q_{N_{i-1}}\leq q <\widetilde{Q}_{i,k_{i}-1}} \bigcup_{\bp\in \Z^{d}} B\left( \tfrac{\bp}{q}, c_{1}\beta^{2} q^{-\tfrac{d+1}{d}}\right)=\emptyset. \label{eq: avoids rationals}
\end{align}
Note that for $k_{i}=0$ we have $\widetilde{Q}_{i,-1}=Q_{N_{i}-1}$.
Let $\ell_{i}\in \N$ denote the unique integer such that 
\begin{equation} \label{eq: ell size}
    \beta^{\ell_{i}+k_{i}}r(B_{N_{i}})> \varepsilon \widetilde{Q}_{i,k_{i}-1}^{-\tfrac{d+1}{d}}\geq \beta^{\ell_{i}+k_{i}+1}r(B_{N_{i}}).
\end{equation}
Note that \eqref{eq: beta epsilon parameter} ensures $\ell_{i}>4$. Using that 
    \begin{equation*}
    \widetilde{Q}_{i,k_{i}-1}^{-\tfrac{d+1}{d}}=2(d!\lambda_{d}(B(0,1)))^{\frac{1}{d}}\beta^{k_{i}-1}r(B_{N_{i}})
\end{equation*}
it can be seen that
\begin{equation} \label{eq: ell size explicit}
    \frac{-\log \varepsilon +\log c_{1}}{|\log \beta|}-2\leq \ell_{i}<\frac{-\log \varepsilon +\log c_{1}}{|\log \beta|}-1.
\end{equation}

Consider the ball 
\begin{equation} \label{eq: shrink still contains ball}
\widetilde{B}_{i,k_{i}+\ell_{i}}\left( \tfrac{\bp_{i}}{q_{i}}\right)=B(\tfrac{\bp_{i}}{q_{i}}, \beta^{k_{i}+\ell_{i}}r(B_{N_{i}})) \subset \widetilde{B}_{i,k_{i}}\left(\tfrac{\bp_{i}}{q_{i}}\right).
\end{equation}
We see that all rational points $\tfrac{\bp}{q}\in \Q^{d}$ with $\widetilde{Q}_{i,k_{i}-1}\leq q<\widetilde{Q}_{i,k_{i}+\ell_{i}}$ such that
\begin{equation} \label{eq: intersect?}
    B\left(\tfrac{\bp_{i}}{q_{i}}, \varepsilon q_{i}^{-\tfrac{d+1}{d}} \right) \cap B\left( \tfrac{\bp}{q}, \varepsilon q^{-\tfrac{d+1}{d}}\right) \neq \emptyset
\end{equation}
lie on a rational affine hyperplane, which we will call $\widetilde{\cH}_{i}$. 
To see why note that the bound on the denominators and \eqref{eq: intersect?} means the distance between $\tfrac{\bp_{i}}{q_{i}}$ and any intersecting rational from this range is at most 
$$2\varepsilon \widetilde{Q}_{i,k_{i}-1}^{-\tfrac{d+1}{d}}<2\beta^{k_{i}+\ell_{i}}r(B_{N_{i}}),$$
so are contained in $2\widetilde{B}_{i,k_{i}+\ell_{i}}\left( \tfrac{\bp_{i}}{q_{i}}\right)$. Applying the Simplex Lemma to $2\widetilde{B}_{i,k_{i}+\ell_{i}}\left( \tfrac{\bp_{i}}{q_{i}}\right)$ shows all rationals $\tfrac{\bp}{q}\in 2\widetilde{B}_{i,k_{i}+\ell_{i}}$ with $1\leq q< \widetilde{Q}_{i,k_{i}+\ell_{i}}$ lie on the affine hyperplane $\widetilde{\cH}_{i}$.

Let $\bv_{i}$ denote a unit vector normal to the hyperplane $\widetilde{\cH}_{i}$ and let
\begin{equation*}
    \widetilde{\cH}^{+}_{i}:=\left\{ \bx \in \R^{d}: \left\langle \bx-\tfrac{\bp_{i}}{q_{i}},\bv_{i} \right\rangle >0 \right\}, 
\end{equation*}
where $\langle\cdot,\cdot\rangle$ denotes the inner product.
Define our target surface to be
\begin{equation*}
    S_{i}:=2\widetilde{B}_{i,k_{i}+\ell_{i}}\left( \tfrac{\bp_{i}}{q_{i}}\right) \cap \widetilde{\cH}^{+}_{i} \cap \left( \partial \left( \bigcup_{\widetilde{Q}_{i,k_{i}-1}\leq q < \widetilde{Q}_{i,k_{i}+1}}\bigcup_{\bp\in \Z^{d}}B\left(\frac{\bp}{q}, \varepsilon q^{-\tfrac{d+1}{d}} \right) \right)\setminus \Delta\left(\widetilde{\cH}_{i}, \tfrac{1}{2}\varepsilon \widetilde{Q}_{i,k_{i}}^{-\tfrac{d+1}{d}} \right) \right).
\end{equation*}
See Figure~\ref{fig:Si+ image}. \\
\begin{figure}[h]
\centering
\begin{tikzpicture}[x=0.45cm,y=0.45cm,
  mainarc/.style={black!80,line width=1.5pt},
  guide/.style={gray!55,line width=.8pt},
  centre/.style={circle,fill=black,inner sep=1.4pt}
]
  % Clip to approximately the GeoGebra viewport.
  \clip (-14,-7.9) rectangle (10.5,9.9);

  % Main diagonal
  \draw[gray!55,line width=.85pt]
    (-12.7,{0.7410714286*(-12.7)+0.4419642857}) --
    (10.5,{0.7410714286*(10.5)+0.4419642857});

  % Parallel dashed
  \draw[gray!55,densely dashed,line width=.65pt]
    (-12.7,{0.7410714286*(-12.7)+2.3114285714}) --
    (10.5,{0.7410714286*(10.5)+2.3114285714});

  % Construction circles.
  \draw[guide] (-5.94,-3.96) circle[radius=3.78132252];
  \draw[guide] (-2.33084856,-1.28536098) circle[radius=2.51000259];
  \draw[guide] (-9.68067205,-6.73210518) circle[radius=0.98312769];
  \draw[guide] (3.91666855,3.34449545) circle[radius=1.91962170];
  \draw[gray!60,line width=.9pt] (-2.33084856,-1.28536098) circle[radius=10.68457558];
  \draw[guide] (0.78,1.02) circle[radius=2.80692002];

  % arcs forming S_i^+.
  \draw[mainarc] (-5.94,-3.96) ++(70.50105262:3.78132252)
      arc[start angle=70.50105262,end angle=193.13713150,radius=3.78132252];
  \draw[mainarc] (-2.33084856,-1.28536098) ++(82.90415724:2.51000259)
      arc[start angle=82.90415724,end angle=159.23671417,radius=2.51000259];
  \draw[mainarc] (0.78,1.02) ++(68.89180451:2.80692002)
      arc[start angle=68.89180451,end angle=176.21244367,radius=2.80692002];
  \draw[mainarc] (3.91666855,3.34449545) ++(88.02537689:1.91962170)
      arc[start angle=88.02537689,end angle=165.05679831,radius=1.91962170];

  % Centres on the diag
  \node[centre] at (-9.68,-6.73) {};
  \node[centre] at (-5.94,-3.96) {};
  \node[centre] at (-2.33,-1.28) {};
  \node[centre] at (0.78,1.02) {};
  \node[centre] at (3.91,3.34) {};

  % Radius
  \draw[->] (-2.33,-1.28) -- (0.16,-0.97);
  \draw[->] (6.91,5.56) -- (6.02,6.77);

  % Label
  \node[anchor=west,font=\Large] at (-4.8,3.1) {$S_i$};
  \node[anchor=north west,font=\small] at (-2.6,-1.5)
    {$\dfrac{\mathbf p_i}{q_i}$};
  \node[anchor=west,font=\small] at (-1.3,-1.6)
    {$\varepsilon q_i^{-\frac{d+1}{d}}$};
  \node[anchor=west,font=\large] at (7,7)
    {$\widetilde{\cH}_{i}$};
  \node[anchor=west,font=\large] at (4.6,-1.5)
    {$2\widetilde{B}_{i,k_i+\ell_i}$};
  \node[anchor=west,font=\small] at (6.2,5)
    {$\tfrac{1}{2}\varepsilon \widetilde{Q}_{i,k_i}^{-\frac{d+1}{d}}$};
\end{tikzpicture}
\caption{An example of a surface $S_{i}$ for $d=2$.}
\label{fig:Si+ image}
%\vspace{-1cm}
\end{figure}

Before beginning with the approach towards the target surface we give some notation and properties that $S_{i}$ possesses. Firstly, note that 
\begin{equation} \label{eq: surface avoids rationals}
    \left(2\widetilde{B}_{i,k_{i}+\ell_{i}}\left( \tfrac{\bp_{i}}{q_{i}}\right)\setminus \Delta(\widetilde{\cH}_{i}, \tfrac{1}{2}\varepsilon \widetilde{Q}_{i,k_{i}}^{-\tfrac{d+1}{d}})\right) \cap \bigcup_{\widetilde{Q}_{i,k_{i}+1}\leq q < \widetilde{Q}_{i,k_{i}+\ell_{i}}} \bigcup_{\bp\in \Z^{d}} B\left( \tfrac{\bp}{q}, \varepsilon q^{-\tfrac{d+1}{d}} \right) = \emptyset,
\end{equation}
and so for any 
\begin{equation} \label{eq: r-height for S neighbourhood}
    0<r<\tfrac{1}{2}\varepsilon \widetilde{Q}_{i,k_{i}}^{-\tfrac{d+1}{d}}-\varepsilon \widetilde{Q}_{i,k_{i}+1}^{-\tfrac{d+1}{d}}=(\tfrac{1}{2}-\beta)\varepsilon \widetilde{Q}_{i,k_{i}}^{-\tfrac{d+1}{d}}
\end{equation}
we have that
\begin{equation} \label{eq: r-neighbourhood of S safe}
    \Delta(S_{i}, r)\cap 2\widetilde{B}_{i,k_{i}+\ell_{i}}\left( \tfrac{\bp_{i}}{q_{i}}\right)\cap \underset{\widetilde{Q}_{i,k_{i}+1}\leq q < \widetilde{Q}_{i,k_{i}+\ell_{i}}}{\bigcup_{Q_{N_{i-1}}\leq q < \widetilde{Q}_{i,k_{i}-1}}} \bigcup_{\bp\in \Z^{d}} B\left( \tfrac{\bp}{q}, \varepsilon q^{-\tfrac{d+1}{d}} \right) =\emptyset
\end{equation}
by \eqref{eq: surface avoids rationals} and \eqref{eq: avoids rationals}.
Let $\pi_{\widetilde{\cH}_{i}}:\R^{d}\to \widetilde{\cH}_{i}$ denote the orthogonal projection from $\R^{d}$ onto $\widetilde{\cH}_{i}$. Let
\begin{equation*}
    \cQ_{i}:=\left\{ \frac{\bp}{q} \in \Q^{d} \cap 2\widetilde{B}_{i,k_{i}+\ell_{i}}\left( \tfrac{\bp_{i}}{q_{i}}\right): \bp \in \Z^{d} \quad \text{ and } \quad \widetilde{Q}_{i,k_{i}-1}\leq q < 2\widetilde{Q}_{i,k_{i}} \right\}.
\end{equation*}
and for any $\by \in S_{i}$ let
\begin{equation*}
    \cQ_{i}(\by):=\left\{ \frac{\bp}{q} \in \cQ_{i}: \left\| \by-\tfrac{bp}{q}\right\|_{2}=\varepsilon q^{-\tfrac{d+1}{d}} \right\}.
\end{equation*}
For each $\tfrac{\bp}{q} \in \cQ_{i}$ let
\begin{equation*}
    U(\tfrac{\bp}{q})=\left(\partial B\left(\tfrac{\bp}{q}, \varepsilon q^{-\tfrac{d+1}{d}} \right) \setminus \Delta\left(\widetilde{\cH}_{i}, \tfrac{1}{2}\varepsilon \widetilde{Q}_{i,k_{i}}^{-\tfrac{d+1}{d}}\right) \right) \cap \widetilde{\cH}^{+}_{i}.
\end{equation*}
Note that some of these sets may be empty, particularly if $q>\widetilde{Q}_{i,k_{i}}$. Then 
\begin{equation} \label{eq: S covering}
    S_{i} \subseteq \bigcup_{\tfrac{\bp}{q} \in \cQ_{i}} U\left(\tfrac{\bp}{q} \right) \quad \text{ and } \quad \pi_{\widetilde{\cH}_{i}}(S_{i})= \pi_{\widetilde{\cH}_{i}}\left(\bigcup_{\tfrac{\bp}{q} \in \cQ_{i}} U\left(\tfrac{\bp}{q} \right) \right).
\end{equation}
That is, we can think of $S_{i}$ as a surface composed of a collection of spherical caps.
Since $\cQ_{i}\subset \widetilde{\cH}_{i}$ for every $\by \in \pi_{\widetilde{\cH}_{i}}(U(\tfrac{\bp}{q}))$ there exists unique $\bx\in U(\tfrac{\bp}{q})$ such that $\pi_{\widetilde{\cH}_{i}}(\bx)=\by$. Moreover,
\begin{equation*}
    \bx = \by +\xi(\by, \tfrac{\bp}{q}) \bv \quad \text{ where} \quad \xi(\by, \tfrac{\bp}{q}):=\left( \varepsilon^{2}q^{-2\tfrac{d+1}{d}}-\left\| \by-\tfrac{\bp}{q}\right\|_{2}^{2} \right)^{\tfrac{1}{2}}.
\end{equation*}
 Define the map $\Phi: \pi_{\widetilde{\cH}_{i}}(S_{i}) \to S_{i}$ to be
\begin{equation*}
    \Phi(\by)=\by+ \left( \sup_{\tfrac{\bp}{q}\in \cQ_{i}: \by \in \pi_{\widetilde{\cH}_{i}}(U(\tfrac{\bp}{q}))} \xi(\by,\tfrac{\bp}{q}) \right)\bv.
\end{equation*}

In later parts of the proof the properties of the tangent space to points in $S_{i}$ will be useful. Note that for any $\by \in S_{i}$ and any $\tfrac{\bp}{q} \in \cQ_{i}(\by)$ one can see that $\by-\tfrac{\bp}{q}$ is normal the tangent space $\cH_{\by}$ at $\by \in U(\tfrac{\bp}{q})$. Letting $\alpha$ denote the angle between $\by-\tfrac{\bp}{q}$ and $\bv_{i}$ we can see that

\begin{equation} \label{eq: sharp angle}
    \cos \alpha \geq\frac{\tfrac{1}{2}\varepsilon \widetilde{Q}_{i,k_{i}}^{-\tfrac{d+1}{d}}}{\varepsilon \widetilde{Q}_{i,k_{i}-1}^{-\tfrac{d+1}{d}}}=\tfrac{1}{2}\beta.
\end{equation}
In particular one can use this to see that for any $\by \in S_{i}$ the cone
\begin{equation} \label{eq: cone of avoidence}
    D_{\by}\cap S_{i}=\emptyset \quad \text{ for } \quad D_{\by}:= \left\{ \by +\bu: \|\bu\|_{2}>0 \quad \text{ and } \quad  \tfrac{\bu}{\|\bu\|_{2}}\cdot \bv_{i} \geq \left(1-\tfrac{1}{4}\beta^{2}\right)^{\frac{1}{2}} \right\}.
\end{equation}
Take the point $\omega_{i,1}:=\Phi(\tfrac{\bp_{i}}{q_{i}})$ and consider the ball
\begin{equation*}
    \widetilde{B}_{i,k_{i}+\ell_{i}+1+\tfrac{d}{d-1}}(\omega_{i,1})=B\left(\omega_{i,1},\beta^{k_{i}+\ell_{i}+1+\tfrac{d}{d-1}}r(B_{N_{i}})\right). 
\end{equation*}
For sake of notation, for $\bx \in \R^{d}$ and $t\in \R_{+}$ set
\begin{equation*}
    \widehat{B}_{i,t}(\bx)=\widetilde{B}_{i,k_{i}+\ell_{i}+1+t\tfrac{d}{d-1}}(\bx).
\end{equation*}
We have 
\begin{equation} \label{eq: Bhat 1 contained in Btilde}
    \widehat{B}_{i,1}(\omega_{i,1})\subset \widetilde{B}_{i,k_{i}+\ell_{i}},
\end{equation}
with the containment due to the triangle inequality since 
\begin{equation*}
    \left\| \omega_{i} - \tfrac{\bp_{i}}{q_{i}} \right\|_{2} \leq  \varepsilon \widetilde{Q}_{i,k_{i}-1}^{-\tfrac{d+1}{d}} \overset{\eqref{eq: ell size}}{<} \beta^{\ell_{i}+k_{i}}r(B_{N_{i}}) \quad \text{ and } \quad \beta^{k_{i}+\ell_{i}+1+\tfrac{d}{d-1}}r(B_{N_{i}}) < \beta^{k_{i}+\ell_{i}}r(B_{N_{i}}).
\end{equation*}
Furthermore note that 
\begin{equation} \label{eq: contained in S}
    \pi_{\widetilde{\cH}_{i}} (U(\tfrac{\bp_{i}}{q_{i}})) \supset \pi_{\widetilde{\cH}_{i}} (\widehat{B}_{i,1}(\omega_{i,1})).
\end{equation}
This follows from the observation that both projected balls share the same centrepoint, and that
\begin{align*}
    r\left( \pi_{\widetilde{\cH}_{i}} (\widehat{B}_{i,1}(\omega_{i,1})) \right)= \beta^{k_{i}+\ell_{i}+1+\tfrac{d}{d-1}}r(B_{N_{i}}) \overset{\eqref{eq: ell size}}{\leq} & \beta^{\tfrac{d}{d-1}}\varepsilon \widetilde{Q}_{i,k_{i}-1}^{-\tfrac{d+1}{d}}\\
    <&\beta^{\tfrac{1}{d-1}}\varepsilon \widetilde{Q}_{i,k_{i}}^{-\tfrac{d+1}{d}} \\
    \overset{\text{\eqref{eq: beta parameter}}}{\leq}&  \left( \varepsilon^{2}\widetilde{Q}_{i,k_{i}}^{-2\tfrac{d+1}{d}} - \tfrac{1}{4}\varepsilon^{2}\widetilde{Q}_{i,k_{i}}^{-2\tfrac{d+1}{d}} \right)^{\tfrac{1}{2}}  \qquad \\
    \leq& \left( \varepsilon^{2}q_{i}^{-2\tfrac{d+1}{d}} - \tfrac{1}{4}\varepsilon^{2}\widetilde{Q}_{i,k_{i}}^{-2\tfrac{d+1}{d}} \right)^{\tfrac{1}{2}} &&\text{($q_{i}<\widetilde{Q}_{i,k_{i}}$)}\\
    =& r\left( \pi_{\widetilde{\cH}_{i}} (U(\tfrac{\bp_{i}}{q_{i}}))\right)
\end{align*}
Using \eqref{eq: S covering} this implies that the only edges of the surface $S_{i}\cap \widehat{B}_{i,1}(\omega_{i,1})$ is $S_{i} \cup \partial \widehat{B}_{i,1}(\omega_{i,1})$. To see why note that the only edge points of $S_{i}$ are on $\Delta(\widetilde{\cH}_{i}, \tfrac{1}{2}\varepsilon \widetilde{Q}_{i,k_{i}}^{-(d+1)/d})$, which $U(\tfrac{\bp_{i}}{q_{i}})$ avoids by definition, and then use \eqref{eq: contained in S}.

To this end we can see that $S_{i}$ partitions $\widehat{B}_{i,1}(\omega_{i,1})$. Set
\begin{equation} \label{eq: S_i^+ def}
    S_{i}^{+}:=\widehat{B}_{i,1}(\omega_{i,1}) \setminus \left( \bigcup_{\widetilde{Q}_{i,k_{i}-1}\leq q < \widetilde{Q}_{i,k_{i}+1}}\bigcup_{\bp\in \Z^{d}}B\left(\frac{\bp}{q}, \varepsilon q^{-\tfrac{d+1}{d}} \right) \right).
\end{equation}
Then, we have
\begin{align} \label{eq: previous section result}
    \widehat{B}_{i,1}(\omega_{i,1})\cap S_{i}^{+}& \cap \bigcup_{Q_{N_{i-1}}\leq q < \widetilde{Q}_{i,k_{i}+\ell_{i}}} \bigcup_{\bp\in \Z^{d}} B\left( \tfrac{\bp}{q}, \varepsilon q^{-\tfrac{d+1}{d}} \right) =\emptyset.
\end{align}
This follows from the definition \eqref{eq: S_i^+ def} and \eqref{eq: r-neighbourhood of S safe}, where we are using $\omega_{i,1}\in S_{i}$, \eqref{eq: Bhat 1 contained in Btilde}, and 
\begin{equation*}
    r(\widehat{B}_{i,1}(\omega_{i,1}))< \beta^{\tfrac{1}{d-1}}\varepsilon \widetilde{Q}_{i,k_{i}}^{-\tfrac{d+1}{d}}\overset{\text{\eqref{eq: beta parameter}}}{<} (\tfrac{1}{2}-\beta)\varepsilon \widetilde{Q}_{i,k_{i}}^{-\tfrac{d+1}{d}}
\end{equation*}
so \eqref{eq: r-height for S neighbourhood} is satisfied and \eqref{eq: r-neighbourhood of S safe} is applicable. 

\subsubsection{Approaching the target surface}

We prove the following key lemma

\begin{lemma} \label{lem: spyglass lem}
    Suppose that
    \begin{equation} \label{eq: t size}
        %t_{i}\tfrac{1}{d-1}<\ell_{i}-(2+\tfrac{d}{d-1})\leq (t_{i}+1)\tfrac{1}{d-1}.
        t_{i}=(\ell_{i}-4)(d-1)
    \end{equation}
    Then there exists $\omega_{i,t}\in S_{i}$ such that
    \begin{align}
        \widehat{B}_{i,t_{i}}(\omega_{i,t_{i}})& \subset \widehat{B}_{i,1}(\omega_{i,1}) \label{lem: containment condition}\\
        \widehat{B}_{i,t_{i}}(\omega_{i,t_{i}})&\cap S_{i}^{+} \cap \bigcup_{Q_{N_{i-1}}\leq q < \widetilde{Q}_{i,k_{i}+\ell_{i}+t_{i}-1}} \bigcup_{\bp\in \Z^{d}} B\left( \tfrac{\bp}{q}, \varepsilon q^{-\tfrac{d+1}{d}}\right)=\emptyset\, .\label{lem: avoid rationals condition}
    \end{align}
\end{lemma}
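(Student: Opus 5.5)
We will construct the points $\omega_{i,t}\in S_i$ for $t=1,2,\dots,t_i$ by induction, maintaining at each stage
\begin{equation*}
\widehat{B}_{i,t}(\omega_{i,t})\subset \widehat{B}_{i,t-1}(\omega_{i,t-1})
\end{equation*}
together with the avoidance property
\begin{equation*}
\widehat{B}_{i,t}(\omega_{i,t})\cap S_i^{+}\cap\bigcup_{Q_{N_{i-1}}\le q<\widetilde{Q}_{i,k_i+\ell_i+t-1}}\bigcup_{\bp}B\left(\tfrac{\bp}{q},\varepsilon q^{-\frac{d+1}{d}}\right)=\emptyset .
\end{equation*}
The base case $t=1$ is exactly \eqref{eq: previous section result}, since $\widetilde{Q}_{i,k_i+\ell_i}$ is the upper denominator bound there. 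Taking $t=t_i$ then gives \eqref{lem: containment condition} and \eqref{lem: avoid rationals condition}.

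The inductive step runs as follows. Suppose $\omega_{i,t}$ has been found, and write $B=\widehat{B}_{i,t}(\omega_{i,t})$. Passing from $t$ to $t+1$ shrinks the radius by the factor $\beta^{d/(d-1)}$. The new denominators to handle are those $q$ with $\widetilde{Q}_{i,k_i+\ell_i+t-1}\le q<\widetilde{Q}_{i,k_i+\ell_i+t}$.
\begin{itemize}
\item Apply the Simplex Lemma to a suitable enlargement of $B$, say $2B$ or a slightly larger ball, whose volume is comparable to $\widetilde{Q}_{i,k_i+\ell_i+t}^{-(d+1)}$ up to the shift in indices. The choice of exponent $\frac{d}{d-1}$ in the definition of $\widehat{B}$ should make this volume small enough.
\item Conclude that all relevant new rationals whose balls $B(\tfrac{\bp}{q},\varepsilon q^{-(d+1)/d})$ meet $B$ lie on one affine hyperplane $\cH'$. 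Their radii $\varepsilon q^{-(d+1)/d}$ are small compared with $r(B)$, because $\varepsilon$ is tiny relative to $\beta$; this uses $\ell_i$ and \eqref{eq: ell size}.
\item The dangerous set is therefore contained in $\Delta(\cH',Cr(B))$ for a $C$ of order $\beta^{d/(d-1)}$ times a constant.
\end{itemize}

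Next we need a point $\omega_{i,t+1}\in S_i\cap B$, with a ball of radius $\beta^{d/(d-1)}r(B)$ around it inside $B$, that avoids $\Delta(\cH',Cr(B))$. The natural idea is to work inside the surface $S_i$ rather than in a flat hyperplane. Locally, $S_i$ is a union of spherical caps whose radii $\varepsilon q^{-(d+1)/d}$ are huge compared with $r(B)$. By \eqref{eq: sharp angle} and the cone property \eqref{eq: cone of avoidence}, $S_i\cap B$ is a Lipschitz graph over $\widetilde{\cH}_i$ via $\Phi$, and it is nearly flat at scale $r(B)$. I would split into two cases according to the angle between $\cH'$ and the tangent plane $\cH_{\omega_{i,t}}$.
\begin{itemize}
\item If the angle is large, that is $\sin\theta>2C/(1-C)$, apply Lemma~\ref{lem: colliding hyperplanes} to the tangent plane. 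Then move the resulting point back onto $S_i$ via $\Phi$, absorbing the curvature error of size $r(B)^2/(\varepsilon q^{-(d+1)/d})$ into the margin.
\item If the angle is small, $\cH'$ is nearly tangent to the cap. In that case we use Lemma~\ref{lem: key geometric lem}, applied to the sphere $\partial B(\tfrac{\bp}{q},\varepsilon q^{-(d+1)/d})$ rescaled to the unit sphere. It shows that a hyperplane nearly tangent to the cap cannot stay within distance $Cr(B)$ of it over all of $B$ unless the hyperplane lies essentially outside the ball. Consequently either $\Delta(\cH',Cr(B))$ misses a large portion of $S_i\cap B$, which we then pick, or it lies on the $S_i^{+}$-exterior side. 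In the latter case we note that the rationals on $\cH'$ must be far enough away, via the repulsion \eqref{eq: r-neighbourhood of S safe}, that their $\varepsilon$-balls do not meet $S_i^{+}\cap B$ at all.
\end{itemize}

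The index bookkeeping $t_i=(\ell_i-4)(d-1)$ guarantees two things. First, $r(\widehat{B}_{i,t})$ stays above the scale at which $S_i$ ceases to look flat. Second, the exceptional-turn ball has not yet shrunk below the radius $\varepsilon\widetilde{Q}^{-(d+1)/d}$ needed later.

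The main obstacle is the small-angle case. There one must make sure that the hyperplane $\cH'$ produced by the Simplex Lemma, when nearly tangent to $S_i$, cannot trap every admissible point of $S_i\cap B$. I would first try to prove that any rational $\tfrac{\bp}{q}$ in this denominator range whose $\varepsilon$-ball approaches $S_i$ within $Cr(B)$ would force its ball to intersect $S_i^{+}$ transversally, contradicting the earlier avoidance \eqref{eq: previous section result} at the previous level. The strict convexity of Euclidean balls, quantified by Lemma~\ref{lem: key geometric lem} with $A$ of order $r(B)$ and $B$ of order $Cr(B)$, should supply the needed angle bound $\frac{A-B}{A+B}$. That bound then lets us fall back into the large-angle case.
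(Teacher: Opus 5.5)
There is a genuine gap, and it is the small-angle case you yourself flag as the main obstacle.

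Applying the Simplex Lemma to $2\widehat{B}_{i,t}(\omega_{i,t})$ gives no control over the orientation of the resulting hyperplane $\cH'$, and the bad configuration is not degenerate. For $\tfrac{\bp}{q}\in\cQ_i$ the cap radius is $R=\varepsilon q^{-\frac{d+1}{d}}\geq 2^{-1-\frac{d+1}{d}}\beta^{k_i+\ell_i+2}r(B_{N_i})$. So across $\widehat{B}_{i,t}(\omega_{i,t})$ a single cap deviates from its tangent plane at $\omega_{i,t}$ by about $r(\widehat{B}_{i,t})^{2}/R$. For $t\geq 2$ this is $O(\beta^{\frac{1}{d-1}})$ times the radius $\beta^{\frac{d}{d-1}}r(\widehat{B}_{i,t})$ of the next ball. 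Hence if $\cH'$ happens to be (close to) that tangent plane, every candidate next ball centred on $S_i$ meets $\cH'$, and nothing is left to move to. Lemma~\ref{lem: key geometric lem} cannot exclude this.

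Your way out of this branch also fails. \eqref{eq: r-neighbourhood of S safe} only concerns denominators in $[Q_{N_{i-1}},\widetilde{Q}_{i,k_i-1})\cup[\widetilde{Q}_{i,k_i+1},\widetilde{Q}_{i,k_i+\ell_i})$. It says nothing about the new range $[\widetilde{Q}_{i,k_i+\ell_i+t-1},\widetilde{Q}_{i,k_i+\ell_i+t})$. A rational of the new range lying in $S_i^{+}$ just above the surface trivially has its $\varepsilon$-ball meeting $S_i^{+}$, and nothing proved so far rules such rationals out. Trying instead to contradict \eqref{eq: previous section result} or the inductive hypothesis is circular, since these only control smaller denominators.

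The missing idea is to change the convex body so that the exceptional hyperplane is forced through the centre of a cap. The paper proceeds as follows.
\begin{enumerate}[i)]
\item Suppose $S_i\cap(1-\beta)\widehat{B}_{i,j}(\omega_{i,j})$ lies in a $3\beta^{\frac{d}{d-1}}r(\widehat{B}_{i,j}(\omega_{i,j}))$-neighbourhood of some hyperplane $\cH^{*}$, translated through $\omega_{i,j}$. Lemma~\ref{lem: key geometric lem} is then applied to $\cH^{*}$ itself, together with a path-crossing argument using that $S_i$ separates the ball. This shows the normal $\bv_{*}$ is nearly parallel to $\omega_{i,j}-\tfrac{\bp_{i,j}}{q_{i,j}}$. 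So the bound $\frac{A-B}{A+B}$ certifies that $\cH^{*}$ is nearly tangent; it does not send you back to a large-angle case.
\item The Simplex Lemma is applied to the thin cylinder $C_{i,j}$ of radius $\tfrac{1}{2}r(\widehat{B}_{i,j}(\omega_{i,j}))$ joining the cap centre $\tfrac{\bp_{i,j}}{q_{i,j}}$ to $\omega_{i,j}$. Its volume, roughly $r(\widehat{B}_{i,j}(\omega_{i,j}))^{d-1}\varepsilon q_{i,j}^{-\frac{d+1}{d}}$, is still small enough to capture all denominators below $\widetilde{Q}_{i,k_i+\ell_i+j}$. Because $q_{i,j}<2\widetilde{Q}_{i,k_i}$, the resulting hyperplane $\cH_M$ contains $\tfrac{\bp_{i,j}}{q_{i,j}}$.
\item So if $\cH_M$ meets $B(\omega_{i,j},\tfrac{1}{2}r(\widehat{B}_{i,j}(\omega_{i,j})))$, it is nearly radial, and its angle $\theta$ with $\cH^{*}$ satisfies $\sin\theta\geq\tfrac12$. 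Lemma~\ref{lem: colliding hyperplanes}, followed by an intermediate-value argument along $\bv_{*}$, then produces $\omega_{i,j+1}\in S_i$ with the required avoidance.
\item The complementary case is when $S_i\cap(1-\beta)\widehat{B}_{i,j}(\omega_{i,j})$ is not close to any hyperplane, for instance near a crease between two caps. There your picture of $S_i$ as nearly flat with a tangent plane at $\omega_{i,t}$ breaks down, since \eqref{eq: sharp angle} permits large kinks. This case is handled by the plain Simplex Lemma on $2\widehat{B}_{i,j}(\omega_{i,j})$, choosing the new centre on $S_i$ outside the removed neighbourhood.
\end{enumerate}
Without a device like the cylinder that pins down the orientation of the Simplex hyperplane, your inductive step does not close.
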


\begin{remark}
    One can think of this as playing the hyperplane absolute winning game on $S_{i}$. If $B\cap S_{i}$ is not contained in a $\beta^{\frac{d}{d-1}}r(B)$-neighbourhood of a hyperplane then the proof is similar to the regular turn strategy. When $B\cap S_{i}$ is contained in a neighbourhood of some hyperplane the proof is harder. The key idea is to use the fact that we know rational points $\tfrac{\bp}{q}$ are close to $S_{i}$. We stretch out cylinders from these rationals onto $S_{i}$ and then use the Simplex lemma to remove affine hyperplanes containing dangerous regions. Provided the cylinders are `tangential enough' to the surface $S_{i}$, and are thicker than the neighbourhood of the affine hyperplane we need to remove then we know these affine hyperplanes cannot remove too much.
\end{remark}

\begin{proof}
The method of proof is to iteratively construct balls
\begin{equation}\label{eq: nested iterative assumption}
\widehat{B}_{i,1}(\omega_{i,1}) \supset \widehat{B}_{i,2}(\omega_{i,2}) \supset \widehat{B}_{i,3}(\omega_{i,3}) \supset \cdots \supset \widehat{B}_{i,t_{i}}(\omega_{i,t_{i}})
\end{equation}
such that for each $1\leq j \leq t_{i}$
\begin{equation} \label{eq: avoid rational iterative assumption}
    \widehat{B}_{i,j}(\omega_{i,j})\cap S_{i}^{+} \cap \bigcup_{Q_{N_{i-1}}\leq q < \widetilde{Q}_{i,k_{i}+\ell_{i}+j-1}}\bigcup_{\bp\in \Z^{d}} B\left( \tfrac{\bp}{q}, \varepsilon q^{-\tfrac{d+1}{d}} \right)=\emptyset.
\end{equation}
The base case follows from the previous section, namely \eqref{eq: previous section result} Let us assume we have arrived at some 
\begin{equation*}
\widehat{B}_{i,j}(\omega_{i,j})\subset \widehat{B}_{i,j-1}(\omega_{i,j-1}) \subset \cdots \subset \widehat{B}_{i,1}(\omega_{i,1}),    
\end{equation*}
with $\omega_{i,u}\in S_{i}$ for $u=1, \ldots j\leq t_{i}-1$, such that \eqref{eq: avoid rational iterative assumption} is satisfied. We will now construct ball $\widehat{B}_{i,j+1}(\omega_{i,j+1})$ for some $\omega_{i,j+1}\in S_{i}$.\par 
The strategy is split into two cases of increasing difficulty:
\begin{itemize}
\item \textit{Suppose that for all affine hyperplanes $\cH^{*}$
\begin{equation} \label{eq: non-hyperplane covering}
    \left((1-\beta)\widehat{B}_{i,j}(\omega_{i,j}) \cap S_{i} \right) \setminus \Delta\left(\cH^{*}, 3\beta^{k_{i}+\ell_{i}+1+(j+1)\tfrac{d}{d-1}}r(B_{N_{i}}) \right) \neq \emptyset.
\end{equation}}
Then apply the Simplex lemma to $2\widehat{B}_{i,j}(\omega_{i,j})$ to see that all rational points $\tfrac{\bp}{q}\in \Q^{d}\cap 2\widehat{B}_{i,j}(\omega_{i,j})$ with
\begin{equation*}
    1\leq q < \widetilde{Q}_{i,k_{i}+\ell_{i}+1+j\tfrac{d}{d-1}}
\end{equation*}
lie on a rational affine hyperplane, say $\cH^{*}$. Clearly this includes the rational points with denominators up to $\widetilde{Q}_{i,k_{i}+\ell_{i}+j}$. Note that
\begin{equation} \label{eq: big hype cover little hype}
    \Delta\left(\cH^{*}, \beta^{k_{i}+\ell_{i}+1+(j+1)\tfrac{d}{d-1}}r(B_{N_{i}})\right) \supseteq \Delta\left(\cH^{*}, \varepsilon \widetilde{Q}_{i,k_{i}+\ell_{i}+j-1}^{-\tfrac{d+1}{d}} \right)
\end{equation}
since
\begin{align*}
    \varepsilon \widetilde{Q}_{i,k_{i}+\ell_{i}+j-1}^{-\tfrac{d+1}{d}}&\overset{\text{\eqref{eq: ell size}}}{<}\beta^{k_{i}+\ell_{i}+\ell_{i}+j}r(B_{N_{i}}) \overset{\text{\eqref{eq: t size}}}{<}\beta^{k_{i}+\ell_{i}+1+\tfrac{d}{d-1}+(j+1)\tfrac{1}{d-1}+j}r(B_{N_{i}})\\
    &<\beta^{k_{i}+\ell_{i}+1+(j+1)\tfrac{d}{d-1}}r(B_{N_{i}}).
\end{align*}
Moreover
\begin{equation} \label{eq: little hype remove bad}
    \Delta\left(\cH^{*}, \varepsilon \widetilde{Q}_{i,k_{i}+\ell_{i}+j-1}^{-\tfrac{d+1}{d}} \right) \supseteq \widehat{B}_{i,j}(\omega_{i,j})\cap S_{i}^{+}\cap \bigcup_{\widetilde{Q}_{i,k_{i}+\ell_{i}+j-1}\leq q < \widetilde{Q}_{i,k_{i}+\ell_{i}+j}}\bigcup_{\bp\in \Z^{d}}B\left( \frac{\bp}{q}, \varepsilon q^{-\tfrac{d+1}{d}} \right).
\end{equation}
Since we assumed \eqref{eq: non-hyperplane covering} there exists some 
\begin{equation*}
    \omega_{i,j+1} \in S_{i}\cap (1-\beta)\widehat{B}_{i,j}(\omega_{i,j}) \setminus \Delta\left(\cH^{*}, 3\beta^{k_{i}+\ell_{i}+1+(j+1)\tfrac{d}{d-1}}r(B_{N_{i}})\right).
\end{equation*}
Then, by the triangle inequality we see that
\begin{equation*}
    \widehat{B}_{i,j+1}(\omega_{i,j+1})\subset \widehat{B}_{i,j}(\omega_{i,j}) \setminus \Delta\left( \cH^{*}, \beta^{k_{i}+\ell_{i}+1+(j+1)\tfrac{d}{d-1}}r(B_{N_{i}}) \right).
\end{equation*}
Hence \eqref{eq: nested iterative assumption} is satisfied. By \eqref{eq: big hype cover little hype} and \eqref{eq: little hype remove bad} we have
\begin{equation} \label{eq: disjoint up to some Q case 1}
    \widehat{B}_{i,j+1}(\omega_{i,j+1}) \cap \bigcup_{\widetilde{Q}_{i,k_{i}+\ell_{i}+j-1}\leq q < \widetilde{Q}_{i,k_{i}+\ell_{i}+j}}\bigcup_{\bp\in \Z^{d}}B\left( \frac{\bp}{q}, \varepsilon q^{-\tfrac{d+1}{d}} \right)=\emptyset.
\end{equation}
Combining \eqref{eq: disjoint up to some Q case 1} with our inductive assumption \eqref{eq: avoid rational iterative assumption} we obtain
\begin{equation*}
    \widehat{B}_{i,j+1}(\omega_{i,j+1})\cap S_{i}^{+} \cap \bigcup_{Q_{N_{i-1}}\leq q < \widetilde{Q}_{i,k_{i}+\ell_{i}+j}}\bigcup_{\bp\in \Z^{d}}B\left( \tfrac{\bp}{q},\varepsilon q^{-\tfrac{d+1}{d}}\right)=\emptyset.
\end{equation*}
This completes the first case.

\item \textit{There exists rational affine hyperplane $\cH^{**}$ such that
\begin{equation*}
    (1-\beta)\widehat{B}_{i,j}(\omega_{i,j}) \cap S_{i} \subset \Delta\left(\cH^{**}, 3\beta^{k_{i}+\ell_{i}+1+(j+1)\tfrac{d}{d-1}}r(B_{N_{i}}) \right).
\end{equation*}}
Suppose $\cH^{**}$ is an affine hyperplane satisfying the above equation. We make a minor adjustment to the affine hyperplane, by translating it so that $\omega_{i,j}$ is contained in the hyperplane. Letting $\cH^{*}$ such hyperplane it is easy to see that
\begin{equation}\label{eq: hyperplane covering surface}
    (1-\beta)\widehat{B}_{i,j}(\omega_{i,j}) \cap S_{i} \subset \Delta\left(\cH^{*}, 9\beta^{k_{i}+\ell_{i}+1+(j+1)\tfrac{d}{d-1}}r(B_{N_{i}}) \right).
\end{equation}
Let $\bv_{*}$ denote the unit vector normal to $\cH^{*}$ such that $\langle \bv_{*}, \bv_{i}\rangle \geq 0$. We prove the following claim

\textbf{Claim:} Let $\tfrac{\bp_{i,j}}{q_{i,j}}\in \cQ_{i}$ be such that $$\left\|\omega_{i,j}-\tfrac{\bp_{i,j}}{q_{i,j}}\right\|_{2}=\varepsilon q_{i,j}^{-\tfrac{d+1}{d}}.$$
Then
\begin{equation}\label{eq: angle target}
    \frac{\left\langle \omega_{i,j} -\tfrac{\bp_{i,j}}{q_{i,j}}, \bv_{*} \right\rangle}{\varepsilon q_{i,j}^{-\tfrac{d+1}{d}}}\geq \frac{1-40\beta^{\frac{d}{d-1}}}{1+40\beta^{\frac{d}{d-1}}}.
\end{equation}
\begin{proof}[Proof of Claim:]
We will apply Lemma~\ref{lem: key geometric lem} by replacing the unit ball with $\partial B\left(\frac{\bp_{i,j}}{q_{i,j}}, \varepsilon q_{i,j}^{-\tfrac{d+1}{d}} \right)$, taking the centre $\by=\omega_{i,j}$ and setting
\begin{equation*}
    A=\tfrac{\tfrac{1}{4}r(\widehat{B}_{i,j}(\omega_{i,j}))}{\varepsilon q_{i,j}^{-\tfrac{d+1}{d}}}, \quad B=\tfrac{10 \beta^{\frac{d}{d-1}}r(\widehat{B}_{i,j}(\omega_{i,j}))}{\varepsilon q_{i,j}^{-\tfrac{d+1}{d}}}.
\end{equation*}
Note $0<B<A<\frac{1}{2}$ by \eqref{eq: beta parameter}. Hence, provided
\begin{equation}
    \label{key}
    \inf_{\bz \in \cH^{*} \cap \frac{1}{4}\widehat{B}_{i,j}(\omega_{i,j})}\left\|\bz -\tfrac{\bp_{i,j}}{q_{i,j}}\right\|_{2}>\varepsilon q_{i,j}^{-\tfrac{d+1}{d}}\left(1- \tfrac{10 \beta^{\frac{d}{d-1}}r(\widehat{B}_{i,j}(\omega_{i,j}))}{\varepsilon q_{i,j}^{-\tfrac{d+1}{d}}} \right),
\end{equation}
\eqref{eq: angle target} follows. Therefore, suppose there exists $\bz \in \cH^{*} \cap \frac{1}{4}\widehat{B}_{i,j}(\omega_{i,j})$ such that
\begin{equation} \label{eq: key contradiction}
    \left\|\bz -\tfrac{\bp_{i,j}}{q_{i,j}}\right\|_{2}<\varepsilon q_{i,j}^{-\tfrac{d+1}{d}}\left(1- \tfrac{10 \beta^{\frac{d}{d-1}}r(\widehat{B}_{i,j}(\omega_{i,j}))}{\varepsilon q_{i,j}^{-\tfrac{d+1}{d}}} \right).
\end{equation}
The idea is to construct a path $P$ satisfying
\begin{equation*}
    P\subset \frac{1}{2}\widehat{B}_{i,j}(\omega_{i,j}) \setminus \Delta(\cH^{*}, 9\beta^{\frac{d}{d-1}}r(\widehat{B}_{i,j}(\omega_{i,j})))
\end{equation*}
with one endpoint below $S_{i}$ and the other endpoint above $S_{i}$. Since $S_{i}$ is from the continuous graph $\Phi$ and partitions the relevant ball, we would then have that $P\cap S_{i} \neq \emptyset$, which would contradict our assumption \eqref{eq: hyperplane covering surface}. Hence \eqref{eq: key contradiction} is false and therefore \eqref{key} follows completing the proof.
By \eqref{eq: key contradiction} it follows that
\begin{equation*}
    B\left(\bz, 10 \beta^{\frac{d}{d-1}}r(\widehat{B}_{i,j}(\omega_{i,j}))\right) \subset B\left(\frac{\bp_{i,j}}{q_{i,j}}, \varepsilon q_{i,j}^{-\tfrac{d+1}{d}} \right).
\end{equation*}
Set
\begin{align*}
    \bx=\bz+10 \beta^{\frac{d}{d-1}}r(\widehat{B}_{i,j}(\omega_{i,j}))\bv_{*} \in &B\left( \omega_{i,j},\frac{1}{4}r(\widehat{B}_{i,j}(\omega_{i,j}))+10 \beta^{\frac{d}{d-1}}r(\widehat{B}_{i,j}(\omega_{i,j})) \right)\\ &\subset \tfrac{1}{2}\widehat{B}_{i,j}(\omega_{i,j}),
\end{align*}
Observe that there exists 
\begin{equation*}
    \bs_{1}=\pi_{\widetilde{\cH}_{i}}(\bx)+\xi' \bv_{i} \in \partial B\left(\frac{\bp_{i,j}}{q_{i,j}}, \varepsilon q_{i,j}^{-\tfrac{d+1}{d}} \right) \setminus \Delta(\cH^{*}, 9 \beta^{\frac{d}{d-1}}r(\widehat{B}_{i,j}(\omega_{i,j}))
\end{equation*}
for some $\xi'>0$. If $\bs_{1}\not \in S_{i}$ then there exists some $\xi>\xi'>0$ such that 
\begin{equation*} \label{eq: a contradition1}
\bs=\pi_{\widetilde{\cH}_{i}}(\bx)+\xi \bv_{i} \in S_{i}\setminus \Delta(\cH^{*}, 9 \beta^{\frac{d}{d-1}}r(\widehat{B}_{i,j}(\omega_{i,j})).
\end{equation*}
In either case $\bx$ lies below some $\bs \in S_{i}$, that is 
\begin{equation} \label{eq: above x}
\langle \bx,\bv_{i}\rangle<\langle \bs,\bv_{i}\rangle.
\end{equation}
The point $\bx$ will be our endpoint of $P$ below $S_{i}$. Decompose $\bv_{i}$ so that
\begin{equation*}
    \bv_{i}=\langle \bv_{i}, \bv_{*}\rangle \bv_{*}+ \hat{\bv},
\end{equation*}
for some $\hat{\bv}$ parallel to $\cH^{*}$. Letting $\theta$ be the angle between $\bv_{i}$ and $\bv_{*}$ we have that
\begin{align}
    \|\hat{\bv}\|_{2}&=\sin\theta \label{eq: length hat}\\
    \|\pi_{\widetilde{\cH}_{i}}(\hat{\bv})\|_{2}&=\sin \theta \cos \theta \label{eq: proj length hat}\\
    \|\pi_{\widetilde{\cH}_{i}}(\bv_{*})\|_{2}&=\sin \theta \label{eq: proj length star}\\
    \langle \bv_{i}, \hat{\bv}\rangle&=(\sin\theta)^{2} \label{eq: hat height}
\end{align}
We now split into two cases depending on the size of the angle $\theta$.

Firstly suppose that
\begin{equation} \label{case1 theta small}
    \tan \theta \leq \frac{1}{40} \beta^{-\frac{d}{d-1}}.
\end{equation}
Then $\cos \theta>0$. Let
\begin{equation*}
    \by=\omega_{i,j}+\frac{10\beta^{\frac{d}{d-1}}r(\widehat{B}_{i,j}(\omega_{i,j}))}{\langle \bv_{i},\bv_{*}\rangle}\bv_{i}.
\end{equation*}
Note that the vector $\by-\bx$ is parallel to $\cH^{*}$. Set $\bv_{\bz}= \frac{\by-\bx}{\|\by-\bx\|_{2}}$ and consider the segment
\begin{align*}
    L&:=\left\{ L(t):=\bx +t \bv_{\bz}: 0 \leq t \leq \|\by-\bx\|_{2}\right\}
\end{align*}
Since $\bv_{z}$ is parallel to $\cH^{*}$ and $\bx \not\in \Delta(\cH^{*}, 9 \beta^{\frac{d}{d-1}}r(\widehat{B}_{i,j}(\omega_{i,j}))$ we have that
\begin{equation*}
    L\cap \Delta(\cH^{*}, 9 \beta^{\frac{d}{d-1}}r(\widehat{B}_{i,j}(\omega_{i,j})) =\emptyset.
\end{equation*}
Take
\begin{align*} \label{eq: t_0 size}
    t_{0}&= \|\by-\bx\|_{2}\nonumber \\
    &\leq \|\bz-\omega_{i,j}\|_{2}+ 10\beta^{\frac{d}{d-1}}r(\widehat{B}_{i,j}(\omega_{i,j})) \frac{\|\langle \bv_{i},\bv_{*}\rangle \bv_{*}-\bv_{i}\|_{2}}{\langle \bv_{i},\bv_{*}\rangle}\nonumber\\
     &\overset{\text{\eqref{eq: length hat}}}{\leq} \|\bz-\omega_{i,j}\|_{2}+10\beta^{\frac{d}{d-1}}r(\widehat{B}_{i,j}(\omega_{i,j})) \tan\theta.\nonumber\\
     &\overset{\text{\eqref{case1 theta small}}}{\leq} \frac{1}{2}r(\widehat{B}_{i,j}(\omega_{i,j})) 
\end{align*}
and so $L\subset \tfrac{1}{2}\widehat{B}_{i,j}(\omega_{i,j})$. Hence
\begin{equation*}
    L \subset \frac{1}{2}\widehat{B}_{i,j}(\omega_{i,j}) \setminus \Delta(\cH^{*}, 9\beta^{\frac{d}{d-1}}r(\widehat{B}_{i,j}(\omega_{i,j}))).
\end{equation*}
Note that $L(t_{0})=\by\in L$ and $\langle \by, \bv_{i} \rangle>\langle \omega_{i,j},\bv_{i}\rangle$ so $L(t_{0})$ lies above $S_{i}$. Since $L(0)=\bx$, combining this with \eqref{eq: above x} we see that $L$ satisfies the properties of the path $P$ and so we are done in the case of \eqref{case1 theta small}. Now suppose that 
\begin{equation} \label{case2 theta large}
    \tan \theta > \frac{1}{40} \beta^{-\frac{d}{d-1}}.
\end{equation}
We know that for any $\bu\in S_{i}$ with
\begin{equation*}
    \|\pi_{\widetilde{\cH}_{i}}(\omega_{i,j}-\bu)\|_{2}\leq 20\beta^{\frac{d}{d-1}}r(\widehat{B}_{i,j}(\omega_{i,j}))
\end{equation*}
that
\begin{equation} \label{eq: s height}
   \langle \bu-\omega_{i,j}, \bv_{i}\rangle
   \overset{\text{\eqref{eq: sharp angle}}}{\leq} 20\beta^{\frac{d}{d-1}}r(\widehat{B}_{i,j}(\omega_{i,j})) \frac{(1-\frac{1}{4}\beta^{2})^{\frac{1}{2}}}{\frac{1}{2}\beta} \leq 41\beta^{\frac{1}{d-1}}r(\widehat{B}_{i,j}(\omega_{i,j}))
\end{equation}
This time we consider two line segments. Firstly consider the segment
\begin{equation*}
    K:=\left\{K(t)=\bx + t \frac{\omega_{i,j}-\bz}{\|\omega_{i,j}-\bz\|_{2}}: 0\leq t \leq \|\omega_{i,j}-\bz\|_{2} \right\}. 
\end{equation*}
If $\omega_{i,j}=\bz$ simply take $K=K(0)=\{\bx\}$.
Note that $K$ is parallel to $\cH^{*}$ and $\|\omega_{i,j}-\bz\|_{2}\leq \frac{1}{4}r(\widehat{B}_{i,j}(\omega_{i,j}))$, so 
\begin{equation}\label{K path}
    K \subset \frac{1}{2}\widehat{B}_{i,j}(\omega_{i,j})\setminus \Delta(\cH^{*}, 9 \beta^{\frac{d}{d-1}}r(\widehat{B}_{i,j}(\omega_{i,j})) ).
\end{equation}
Now consider the second segment
\begin{equation*}
    M:=\left\{ M(t)=\bx+ \omega_{i,j}-\bz + t \frac{\hat{\bv}}{\sin \theta}: 0\leq t \leq \frac{1}{4}r(\widehat{B}_{i,j}(\omega_{i,j}))\right\}.
\end{equation*}
We know $\hat{\bv}$ is parallel to $\cH^{*}$. Observe that 
\begin{equation*}
    \|(\bx+ \omega_{i,j}-\bz) -\omega_{i,j}\|_{2}=10 \beta^{\tfrac{d}{d-1}}r(\widehat{B}_{i,j}(\omega_{i,j}))
\end{equation*}
and so, by \eqref{eq: beta parameter} and the triangle inequality 
\begin{equation}  \label{M path}
    M\subset \frac{1}{2}\widehat{B}_{i,j}(\omega_{i,j})\setminus \Delta(\cH^{*}, 9 \beta^{\frac{d}{d-1}}r(\widehat{B}_{i,j}(\omega_{i,j})) ).
\end{equation}
The path $K\cup M$ is connected, and by \eqref{M path} and \eqref{K path} we have 
\begin{equation*}
   K\cup  M\subset \frac{1}{2}\widehat{B}_{i,j}(\omega_{i,j})\setminus \Delta(\cH^{*}, 9 \beta^{\frac{d}{d-1}}r(\widehat{B}_{i,j}(\omega_{i,j})) ).
\end{equation*}
Consider the endpoint $M(t_{1})$ for
\begin{equation*}
    t_{1}=\frac{1}{4}r(\widehat{B}_{i,j}(\omega_{i,j})).
\end{equation*}
Note that \eqref{case2 theta large} implies $\cos\theta \leq 40 \beta^{\frac{d}{d-1}}$. Observe that
\begin{align*}
    \|\pi_{\widetilde{\cH}_{i}}(M(t_{1})-\omega_{i,j})\|_{2} &\leq \|\pi_{\widetilde{\cH}_{i}}(\bv_{*})\|_{2}10\beta^{\frac{d}{d-1}}r(\widehat{B}_{i,j}(\omega_{i,j})) + \tfrac{1}{4} r(\widehat{B}_{i,j}(\omega_{i,j}))\left\|\pi_{\widetilde{\cH}_{i}}\left(\tfrac{\hat{\bv}}{\|\hat{\bv}\|_{2}}\right)\right\|_{2}\\
    &\overset{\text{\eqref{eq: proj length star}+\eqref{eq: proj length hat}}}{\leq} 10\beta^{\frac{d}{d-1}}r(\widehat{B}_{i,j}(\omega_{i,j}))\sin \theta + \tfrac{1}{4} r(\widehat{B}_{i,j}(\omega_{i,j})) \cos\theta\\
    &\leq 20 \beta^{\frac{d}{d-1}} r(\widehat{B}_{i,j}(\omega_{i,j})).
\end{align*}
Also
\begin{align*}
    \langle M(t_{1})-\omega_{i,j}, \bv_{i}\rangle &= 10\beta^{\frac{d}{d-1}}r(\widehat{B}_{i,j}(\omega_{i,j})) \langle \bv_{*},\bv_{i}\rangle + \tfrac{1}{4}r(\widehat{B}_{i,j}(\omega_{i,j})) \left\langle \frac{\hat{\bv}}{\|\hat{\bv}\|_{2}}, \bv_{i}\right\rangle\\
    &\overset{\text{\eqref{eq: hat height}}}{=}10\beta^{\frac{d}{d-1}}r(\widehat{B}_{i,j}(\omega_{i,j})) \cos \theta + \tfrac{1}{4}r(\widehat{B}_{i,j}(\omega_{i,j})) \sin \theta\\
    &\geq \tfrac{1}{4}r(\widehat{B}_{i,j}(\omega_{i,j})) \sin \theta.
\end{align*}
By \eqref{case2 theta large} $\sin\theta> (1+40^2\beta^{2\frac{d}{d-1}})^{-\frac{1}{2}}$, so by \eqref{eq: beta parameter},
\begin{equation*}
    \langle M(t_{1})-\omega_{i,j}, \bv_{i}\rangle \geq \tfrac{1}{5}r(\widehat{B}_{i,j}(\omega_{i,j})).
\end{equation*}
Then
\begin{equation*}
    \langle\Phi(\pi_{\widetilde{\cH}_{i}}(M(t_{1})))-\omega_{i,j}, \bv_{i} \rangle\overset{\text{\eqref{eq: s height}}}{\leq} 41 \beta^{\frac{1}{d-1}}r(\widehat{B}_{i,j}(\omega_{i,j}))\overset{\text{\eqref{eq: beta parameter}}}{<} \tfrac{1}{5}r(\widehat{B}_{i,j}(\omega_{i,j}))\leq \langle M(t_{1})-\omega_{i,j}, \bv_{i}\rangle,
\end{equation*}
and so $M(t_{1})$ lies above $S_{i}$, and $K(0)=\bx$ lies below $S_{i}$. Therefore the path $K\cup M$ satisfies the properties of that $P$, and so we are done for the case \eqref{case2 theta large}. This completes all cases and so the claim is proven.
\end{proof}
Consider the cylinder
\begin{equation*}
   C_{i,j}:= C_{i,j}(\omega_{i,j},\tfrac{\bp_{i,j}}{q_{i,j}}):=\bigcup_{0\leq \eta\leq 1} B\left(\eta \omega_{i,j} +(1-\eta)\tfrac{\bp_{i,j}}{q_{i,j}}, \tfrac{1}{2}r(\widehat{B}_{i,j}(\omega_{i,j})) \right).
\end{equation*}
Observe the cylinder has the following key properties:
\begin{enumerate}[(A)]
\item \label{P1} We have $\omega_{i,j} \in S_{i}$, $\tfrac{\bp_{i,j}}{q_{i,j}}\in \cQ_{i}(\omega_{i,j})$, and 
    \begin{equation*}
        \tfrac{1}{2}\widehat{B}_{i,j}(\omega_{i,j}) \subset (1-\beta)\widehat{B}_{i,j}(\omega_{i,j}).
    \end{equation*}
    
    \item\label{P2} We have $ \|\omega_{i,j}-\tfrac{\bp_{i,j}}{q_{i,j}}\|_{2}=\varepsilon q_{i,j}^{-\frac{d+1}{d}}$

    \item \label{P3} The angle $\alpha$ between the normal vector of the affine hyperplane $\cH^{*}$ and the line containing the segment $\{\eta \omega_{i,j} +(1-\eta)\tfrac{\bp_{i,j}}{q_{i,j}}: 0\leq \eta\leq 1\}$ satisfies 
    \begin{equation*}
        \cos \alpha \geq \frac{1-40\beta^{\frac{d}{d-1}}}{1+40\beta^{\frac{d}{d-1}}}.
    \end{equation*}
    This follows from \eqref{eq: angle target}.  
\end{enumerate}
Using that 
\begin{equation*}
    \left\|\omega_{i,j}-\tfrac{\bp_{i,j}}{q_{i,j}}\right\|_{2}>r(\widehat{B}_{i,j}(\omega_{i,j})),
\end{equation*}
We calculate
\begin{align}
    \lambda_{d}(C_{i,j})&\leq 2 \lambda_{d-1}(B(0,1)) r(\widehat{B}_{i,j}(\omega_{i,j}))^{d-1}\left\|\omega_{i,j}-\tfrac{\bp_{i,j}}{q_{i,j}}\right\|_{2} \nonumber\\%+\lambda_{d}(B(0,1))\tfrac{1}{2^{d}}r(\widehat{B}_{i,1}(\omega_{i,1}))^{d}\nonumber\\
    & \overset{\text{\eqref{P1}+\eqref{P2}}}{\leq} 2 \lambda_{d-1}(B(0,1))\left(\beta^{k_{i}+\ell_{i}+1+j\tfrac{d}{d-1}}r(B_{N_{i}}) \right)^{d-1}\varepsilon \widetilde{Q}_{i,k_{i}-1}^{-\tfrac{d+1}{d}} \nonumber\\
    & \overset{\text{\eqref{eq: ell size}}}{\leq} 2 \lambda_{d-1}(B(0,1))\left(\beta^{k_{i}+\ell_{i}+1+j\tfrac{d}{d-1}}r(B_{N_{i}}) \right)^{d-1}\beta^{k_{i}+\ell_{i}-1}r(B_{N_{i}}) \nonumber\\
    & \leq 2 \lambda_{d-1}(B(0,1)) \beta^{d-2} \left(\beta^{k_{i}+\ell_{i}+j}r(B_{N_{i}})\right)^{d}\nonumber\\
    &\leq 2^{2-d} \tfrac{\lambda_{d-1}(B(0,1))}{\lambda_{d}(B(0,1))} \beta^{d-2} \lambda_{d}\left(2\widetilde{B}_{i,k_{i}+\ell_{i}+j}\right).\label{eq: cylinder measure}
\end{align}
Applying the Simplex lemma to $C_{i,j}$ we see that all rational points $\tfrac{\bp'}{q'}\in \Q^{d}\cap C_{i,j}$ with
\begin{equation*}
    1\leq q'< (d!)^{-\tfrac{1}{d+1}} \lambda_{d}(C_{i,j})^{-\tfrac{1}{d+1}} 
\end{equation*}
lie on a rational affine hyperplane, say $\cH_{M}$. Note by \eqref{eq: cylinder measure} that
\begin{align} \label{eq: cylinder small enough}
    (d!)^{-\tfrac{1}{d+1}} \lambda_{d}(C_{i,j})^{-\tfrac{1}{d+1}} &\geq \left(\tfrac{\beta}{2}\right)^{\tfrac{2-d}{d+1}}\left(\tfrac{\lambda_{d}(B(0,1))}{\lambda_{d-1}(B(0,1))}\right)^{\tfrac{1}{d+1}} \widetilde{Q}_{i,k_{i}+\ell_{i}+j}, \nonumber\\
    & \overset{\text{\eqref{eq: beta parameter}}}{\geq} \widetilde{Q}_{i,k_{i}+\ell_{i}+j}.
\end{align}
Furthermore, $\frac{\bp_{i,j}}{q_{i,j}}\in C_{i,j}\cap \cQ_{i}$, and since $2\widetilde{Q}_{i,k_{i}}<\widetilde{Q}_{i,k_{i}+\ell_{i}+j}$ the Simplex lemma tells us $\tfrac{\bp_{i,j}}{q_{i,j}} \in \cH_{M}$. 

Let $\theta$ be the angle between $\cH_{M}$ and $\cH^{*}$, $\alpha$ be the angle between the line described by the pair of points $(\omega_{i,j},\tfrac{\bp_{i,j}}{q_{i,j}})$ and a vector normal to $\cH^{*}$, and $\gamma$ be the angle between the line described described by the pair of points $(\omega_{i,j},\tfrac{\bp_{i,j}}{q_{i,j}})$ and a normal vector to $\cH_{M}$. By \eqref{P3} we have that 
\begin{equation*}
    \cos \alpha\geq 1-\frac{80\beta^{\frac{d}{d-1}}}{1+40\beta^{\frac{d}{d-1}}}
\end{equation*}
Suppose that $\cH_{M}\cap B(\omega_{i,j}, \tfrac{1}{2}r(\widehat{B}_{i,j}(\omega_{i,j}))) = \emptyset$. Then choose the ball
\begin{equation*}
    B(\omega_{i,j}, \beta^{k_{i}+\ell_{i}+1+(j+1)\frac{d}{d-1}}r(B_{N_{i}}))\subset \tfrac{1}{4}\widehat{B}_{i,j}(\omega_{i,j}).
\end{equation*}
Since
\begin{equation*}
    18 \beta^{k_{i}+\ell_{i}+1+(j+1)\frac{d}{d-1}}r(B_{N_{i}})<\tfrac{1}{4}r(\widehat{B}_{i,j}(\omega_{i,j})
\end{equation*}
it follows that
\begin{equation*}
    B\left(\omega_{i,j}, \beta^{k_{i}+\ell_{i}+1+(j+1)\frac{d}{d-1}}r(B_{N_{i}})\right)\subset \tfrac{1}{4}\widehat{B}_{i,j}(\omega_{i,j})\setminus \Delta(\cH_{M}, 18 \beta^{k_{i}+\ell_{i}+1+(j+1)\frac{d}{d-1}}r(B_{N_{i}})).
\end{equation*}

So assume $\cH_{M}\cap B(\omega_{i,j}, \tfrac{1}{2}r(\widehat{B}_{i,j}(\omega_{i,j}))) \neq \emptyset$. Then it can be shown that
\begin{equation*}
    \sin \gamma\geq \left(1-\frac{1}{4}\left(\frac{r(\widehat{B}_{i,j}(\omega_{i,j}))}{\left\|\omega_{i,j}-\tfrac{\bp_{i,j}}{q_{i,j}}\right\|_{2}} \right)^{2}\right)^{\tfrac{1}{2}}.
\end{equation*}
By \eqref{P1} and \eqref{P2} we know 
\begin{equation*}
    \left\|\omega_{i,j}-\tfrac{\bp_{i,j}}{q_{i,j}}\right\|_{2}\geq 2^{-1-\frac{d+1}{d}}\varepsilon \widetilde{Q}_{i,k_{i}}^{-\tfrac{d+1}{d}}>2^{-1-\frac{d+1}{d}}\beta^{k_{i}+\ell_{i}+2}r(B_{N_{i}})
\end{equation*}
and so
\begin{equation} \label{eq: theta angle}
    \sin \gamma \geq \left(1-2^{2\frac{d+1}{d}}\beta^{2\left(j\tfrac{d}{d-1}-1\right)}\right)^{\tfrac{1}{2}}.
\end{equation}
Note that by \eqref{eq: beta parameter} we can see $\gamma>\alpha$ and so $\theta\geq \gamma-\alpha$. Hence we have that
\begin{align*}
    \sin \theta &\geq  \sin(\gamma-\alpha) \\
    &\geq \sin\gamma\cos\alpha -\sin\alpha\\
    & \overset{\text{\eqref{eq: beta parameter}+\eqref{eq: theta angle}+\eqref{P3}}}{\geq} \frac{1}{2}
\end{align*}
 For $\beta>0$ satisfying \eqref{eq: beta parameter} we have that
\begin{equation*}
    \frac{1}{2}> \frac{144 \beta^{\tfrac{d}{d-1}}}{1-72 \beta^{\tfrac{d}{d-1}}}
\end{equation*}
So we can apply Lemma~\ref{lem: colliding hyperplanes} to $\cH=\cH^{*}$, $\cH'=\cH_{M}$, $B(\bx,r)=\tfrac{1}{4}\widehat{B}_{i,j}(\omega_{i,j})$ and $C=72\beta^{\tfrac{d}{d-1}}$. Hence there exists some $\bz \in \cH^{*}$ such that
\begin{equation} \label{eq: avoid rationals case 2}
    B\left(\bz, 18\beta^{k_{i}+\ell_{i}+1+(j+1)\tfrac{d}{d-1}}r(B_{N_{i}}) \right) \subset \tfrac{1}{4}\widehat{B}_{i,j}(\omega_{i,j}) \setminus \Delta\left(\cH_{M}, 18\beta^{k_{i}+\ell_{i}+1+(j+1)\tfrac{d}{d-1}}r(B_{N_{i}}) \right).
\end{equation}
Note that
\begin{align*}
    \beta^{k_{i}+\ell_{i}+1+(j+1)\tfrac{d}{d-1}}r(B_{N_{i}})&\geq \beta^{2+(j+1)\tfrac{d}{d-1}}\varepsilon \widetilde{Q}_{i,k_{i}-1}^{-\tfrac{d+1}{d}}\nonumber \\
    &= \beta^{2+(j+1)\tfrac{d}{d-1}-\ell_{i}-j}\varepsilon \widetilde{Q}_{i,k_{i}+\ell_{i}+j-1}^{-\tfrac{d+1}{d}} \nonumber \\
    &\overset{\text{\eqref{eq: t size}}}{>}\varepsilon \widetilde{Q}_{i,k_{i}+\ell_{i}+j-1}^{-\tfrac{d+1}{d}}
\end{align*}
provided $j+1\leq t_{i}$, and so, by \eqref{eq: cylinder small enough} and since
\begin{equation} \label{eq: hat larger than tilde}
    \tfrac{1}{4}r(\widehat{B}_{i,j}(\omega_{i,j}))>\varepsilon \widetilde{Q}_{i,k_{i}+\ell_{i}+j-1}^{-\tfrac{d+1}{d}},
\end{equation}
we have
\begin{align}
    \Delta\left(\cH_{M}, 18\beta^{k_{i}+\ell_{i}+1+(j+1)\tfrac{d}{d-1}}r(B_{N_{i}}) \right) &\supseteq \Delta\left(\cH_{M}, \varepsilon \widetilde{Q}_{i,k_{i}+\ell_{i}+j-1}^{-\tfrac{d+1}{d}} \right) \nonumber\\
    &\hspace{-2cm} \supseteq B\left(\bx, \tfrac{1}{4}r(\widehat{B}_{i,j}(\omega_{i,j})) \right) \cap \bigcup_{\widetilde{Q}_{i,k_{i}+\ell_{i}+j-1}\leq q < \widetilde{Q}_{i,k_{i}+\ell_{i}+j}}\bigcup_{\bp\in \Z^{d}} B\left( \tfrac{\bp}{q}, \varepsilon q^{-\tfrac{d+1}{d}} \right) \label{eq: rationals we avoid case 2}
\end{align}
Consider the two point 
\begin{equation*}
    \bz_{-}:=\bz-9\beta^{k_{i}+\ell_{i}+1+(j+1)\frac{d}{d-1}}r(B_{N_{i}})\bv_{*}, \qquad \bz_{+}:=\bz+9\beta^{k_{i}+\ell_{i}+1+(j+1)\frac{d}{d-1}}r(B_{N_{i}})\bv_{*},
\end{equation*}
and the points $\Phi(\pi_{\widetilde{\cH}_{i}}(\bz_{-}))$, $\Phi(\pi_{\widetilde{\cH}_{i}}(\bz_{+}))$. By \eqref{eq: contained in S} and \eqref{lem: containment condition} these points are defined and belong to $S_{i}$. 
By \eqref{eq: hyperplane covering surface} we must have that
\begin{equation*}
    \langle \bz_{+}, \bv_{i} \rangle \geq \langle \Phi(\pi_{\widetilde{\cH}_{i}}(\bz_{+})), \bv_{i}\rangle, \quad \text{ and } \quad \langle \bz_{-}, \bv_{i} \rangle \leq \langle \Phi(\pi_{\widetilde{\cH}_{i}}(\bz_{-})), \bv_{i}\rangle.
\end{equation*}
Therefore, by considering the endpoints of the segment
\begin{equation*}
    \left\{\bz_{-}+t \bv_{*}: 0\leq t \leq 18\beta^{k_{i}+\ell_{i}+1+(j+1)\frac{d}{d-1}}r(B_{N_{i}})\right\}
\end{equation*}
and seeing that one point lies above $S_{i}$ and one point lies below $S_{i}$, by $S_{i}$ being a continuous $(d-1)$-dimensional surface we have that
\begin{equation*}
    \left\{\bz_{-}+t \bv_{*}: 0\leq t \leq 18\beta^{k_{i}+\ell_{i}+1+(j+1)\frac{d}{d-1}}r(B_{N_{i}})\right\} \cap S_{i}\neq \emptyset. 
\end{equation*} 
and so we can choose a point $\omega_{i,j+1} \in S_{i}\cap B\left(\bz, 9\beta^{k_{i}+\ell_{i}+1+(j+1)\tfrac{d}{d-1}}r(B_{N_{i}}) \right)$ such that 
\begin{equation*}
    \widehat{B}_{i,j+1}(\omega_{i,j+1}) \subset B\left(\bz, 18\beta^{k_{i}+\ell_{i}+1+(j+1)\tfrac{d}{d-1}}r(B_{N_{i}}) \right)\overset{\text{\eqref{eq: avoid rationals case 2}}}{\subset} \tfrac{1}{4}\widehat{B}_{i,j}(\omega_{i,j}) \overset{\text{\eqref{P1}}}{\subset} (1-\beta)\widehat{B}_{i,j}(\omega_{i,j}).
\end{equation*}
Hence \eqref{eq: nested iterative assumption} is satisfied.
By \eqref{eq: avoid rationals case 2}, \eqref{eq: rationals we avoid case 2}, and \eqref{eq: hat larger than tilde}
we have
\begin{equation} \label{eq: A}
    \widehat{B}_{i,j+1}(\omega_{i,j+1}) \cap \bigcup_{\widetilde{Q}_{i,k_{i}+\ell_{i}+j-1}\leq q < \widetilde{Q}_{i,k_{i}+\ell_{i}+j}}\bigcup_{\bp\in \Z^{d}} B\left( \tfrac{\bp}{q}, \varepsilon q^{-\tfrac{d+1}{d}} \right) = \emptyset.
\end{equation}
Combining \eqref{eq: A} with \eqref{eq: avoid rational iterative assumption} we obtain
\begin{equation*}
    \widehat{B}_{i,j+1}(\omega_{i,j+1})\cap S_{i}^{+} \cap \bigcup_{Q_{N_{i-1}}\leq q < \widetilde{Q}_{i,k_{i}+\ell_{i}+j}}\bigcup_{\bp\in \Z^{d}}B\left( \tfrac{\bp}{q},\varepsilon q^{-\tfrac{d+1}{d}}\right)=\emptyset
\end{equation*}
completing the inductive step in this subcase.
\end{itemize}
This completes all possible cases and so the proof of the Lemma~\ref{lem: spyglass lem} is complete.
\end{proof}

\subsubsection{Completing the exceptional turn}
To finish the exceptional turn take the ball $\widehat{B}_{i,t_{i}}(\omega_{i,t_{i}})$ as given in Lemma~\ref{lem: spyglass lem}. We want to pick a ball $\check{B}_{i,1}$ such that
\begin{align} \label{eq: check conditions 1}
r(\check{B}_{i,1})&=\frac{1}{9}\beta r(\widehat{B}_{i,t_{i}}(\omega_{i,t_{i}})),\\
    \check{B}_{i,1}&\subset \widehat{B}_{i,t_{i}}(\omega_{i,t_{i}})\cap S_{i}^{+}.\label{eq: check conditions 2}\\
    \check{B}_{i,1}& \cap \bigcup_{\widetilde{Q}_{i,k_{i}+\ell_{i}+t_{i}-1}\leq q < \widetilde{Q}_{i,k_{i}+\ell_{i}+1+t_{i}\frac{d}{d-1}}} \bigcup_{\bp \in \Z^{d}} B\left(\tfrac{\bp}{q}, \varepsilon q^{-\tfrac{d+1}{d}} \right)=\emptyset. \label{eq: check conditions 3}
\end{align}
Note we are no longer choosing our centre point in $S_{i}$. Take $\omega_{i,t_{i}}$, take $\bv_{i}$ (recall this is the unit vector normal to the affine hyperplane $\widetilde{\cH}_{i}$, which is the hyperplane on which $S_{i}$ is built around) and consider the bounded cone
\begin{align*}
    D_{\omega_{i,t_{i}}}&:=\left\{ \omega_{i,t_{i}}+\bu : 0<\|\bu\|_{2}\leq r(\widehat{B}_{i,t_{i}}(\omega_{i,t_{i}})) \quad \text{ and } \tfrac{\bu}{\|\bu\|_{2}} \cdot \bv_{i} \geq \left(1-\tfrac{1}{4}\beta^{2}\right)^{\tfrac{1}{2}}  \right\} \\
    &\subset \widehat{B}_{i,t_{i}}(\omega_{i,t_{i}})\cap S_{i}^{+}.
\end{align*}
This follows from \eqref{eq: cone of avoidence}. Take the ball
\begin{equation*}
    \check{B}:=B\left( \omega_{i,t_{i}}+(1-\tfrac{1}{3}\beta)r(\widehat{B}_{i,t_{i}}(\omega_{i,t_{i}}))\bv_{i},\tfrac{1}{3}\beta r(\widehat{B}_{i,t_{i}}(\omega_{i,t_{i}})) \right).
\end{equation*}
Letting $\alpha$ denote the half-angle of the cone $D_{\omega_{i,t_{i}}}$, it can be seen that
\begin{equation*}
    (1-\tfrac{1}{3}\beta)r(\widehat{B}_{i,t_{i}}(\omega_{i,t_{i}})) \sin \alpha \geq (1-\tfrac{1}{3}\beta)\tfrac{1}{2}\beta r(\widehat{B}_{i,t_{i}}(\omega_{i,t_{i}}))\overset{\text{\eqref{eq: beta parameter}}}{>}\tfrac{1}{3}\beta r(\widehat{B}_{i,t_{i}}(\omega_{i,t_{i}})).
\end{equation*}
 Hence $\check{B} \subset D_{\omega_{i,t_{i}}}$, and so picking any ball inside $\check{B}$ will ensure \eqref{eq: check conditions 2} is satisfied.

Apply the Simplex Lemma to $2\widehat{B}_{i,t_{i}}(\omega_{i,t_{i}})$ to see that all rational points $\tfrac{\bp}{q}\in \Q^{d}\cap 2\widehat{B}_{i,t_{i}}(\omega_{i,t_{i}})$ with
\begin{equation*}
    1 \leq q < (d!)^{-\tfrac{1}{d+1}}\lambda_{d}(2\widehat{B}_{i,t_{i}}(\omega_{i,t_{i}}))^{-\tfrac{1}{d+1}}=\widetilde{Q}_{i,k_{i}+\ell_{i}+1+t_{i}\frac{d}{d-1}}
\end{equation*}
lie on a rational affine hyperplane, say $\check{\cH}_{i,1}$. Using that
\begin{align*}
     \varepsilon \widetilde{Q}_{i, k_{i}+\ell_{i}+t_{i}-1}^{-\tfrac{d+1}{d}} &\overset{\text{\eqref{eq: ell size}}}{\leq} \beta^{k_{i}+\ell_{i}+\ell_{i}+t_{i}}r(B_{N_{i}})\\
     & \overset{\text{\eqref{eq: t size}}}{=} \beta^{k_{i}+\ell_{i}(d+1)-4(d-1)}r(B_{N_{i}})\\
     & = \beta^{3}\beta^{k_{i}+\ell_{i}(d+1)+1-4d}r(B_{N_{i}})\\
     & = \beta^{3} r(\widehat{B}_{i,t_{i}}(\omega_{i,t_{i}}))\\
     &\overset{\text{\eqref{eq: beta parameter}}}{\leq}\tfrac{1}{9} \beta r(\widehat{B}_{i,t_{i}}(\omega_{i,t_{i}})).
\end{align*}
we have
\begin{equation} \label{eq: avoiding rationals AGAIN}
    \Delta(\check{\cH}_{i,1}, \varepsilon \widetilde{Q}_{i, k_{i}+\ell_{i}+t_{i}-1}^{-\tfrac{d+1}{d}}) \supseteq \widehat{B}_{i,t_{i}}(\omega_{i,t_{i}}) \cap \bigcup_{\widetilde{Q}_{i,k_{i}+\ell_{i}+t_{i}-1}\leq q < \widetilde{Q}_{i,k_{i}+\ell_{i}+1+t_{i}\frac{d}{d-1}}} \bigcup_{\bp \in \Z^{d}} B\left(\tfrac{\bp}{q}, \varepsilon q^{-\tfrac{d+1}{d}} \right).
\end{equation}

Thus, there exists ball $\check{B}_{i,1}$ with $r(\check{B}_{i,1})=\frac{1}{9}\beta r(\widehat{B}_{i,t_{i}}(\omega_{i,t_{i}}))$, such that
\begin{equation*}
    \check{B}_{i,1} \subset \check{B} \setminus \Delta(\check{\cH}_{i,1}, \varepsilon \widetilde{Q}_{i, k_{i}+\ell_{i}+t_{i}-1}^{-\frac{d+1}{d}}).
\end{equation*}
and so by \eqref{eq: avoiding rationals AGAIN} $\check{B}_{i,1}$ satisfies \eqref{eq: check conditions 1} and \eqref{eq: check conditions 3}. 
Observe that 
\begin{equation*}
    r(\check{B}_{i,1})\overset{\text{\eqref{eq: check conditions 1}}}{=} \frac{1}{9}\beta^{k_{i}+\ell_{i}+1+t_{i}\frac{d}{d-1}+1}r(B_{N_{i}})\overset{\text{\eqref{eq: t size}}}{=}\frac{1}{9}\beta^{k_{i}+\ell_{i}(d+1)+2-4d}r(B_{N_{i}})
\end{equation*}
Applying the Simplex Lemma to $2\check{B}_{i,1}$ we see that all rational points $\tfrac{\bp}{q}\in \Q^{d}\cap 2\check{B}_{i,1}$ with
\begin{equation*}
    1\leq q < (d!)^{-\tfrac{1}{d+1}} \lambda_{d}(2\check{B}_{i,1})^{-\tfrac{1}{d+1}}=9^{\frac{d}{d+1}}\widetilde{Q}_{i,k_{i}+\ell_{i}(d+1)+2-4d}
\end{equation*}
lie on an affine hyperplane, say $\check{\cH}_{i,2}$. Note that
\begin{align*}
    \Delta(\check{\cH}_{i,2}, \beta r(\check{B}_{i,1}))&\supset \Delta(\check{\cH}_{i,2}, \varepsilon \widetilde{Q}_{i,k_{i}+\ell_{i}(d+1)+1-4d}^{-\tfrac{d+1}{d}})\\
    &\supset \check{B}_{i,1}\cap \bigcup_{\widetilde{Q}_{i,k_{i}+\ell_{i}(d+1)+1-4d} \leq q< \widetilde{Q}_{i,k_{i}+\ell_{i}(d+1)+2-4d}} \bigcup_{\bp \in \Z^{d}} B\left( \tfrac{\bp}{q}, \varepsilon q^{-\tfrac{d+1}{d}}\right)
\end{align*}
since
\begin{equation*}
    \varepsilon \widetilde{Q}_{i,k_{i}+\ell_{i}(d+1)+1-4d}^{-\tfrac{d+1}{d}} \overset{\text{\eqref{eq: ell size}}}{\leq} \beta^{k_{i}+\ell_{i}(d+1)+\ell_{i}-4d}r(B_{N_{i}})\leq \beta r(\check{B}_{i,1}),
\end{equation*}
since $\ell_{i}>4$ and \eqref{eq: beta parameter}. Therefore, by \eqref{eq: beta parameter} we may pick a ball $\check{B}_{i,2}$ such that
\begin{equation}\label{eq: rational miss 1}
    \check{B}_{i,2} \subset \check{B}_{i,1}\setminus \Delta(\check{\cH}_{i,2}, \beta r(\check{B}_{i,1})) \quad \text{ and } \quad r(\check{B}_{i,2})=9\beta r(\check{B}_{i,1}).
\end{equation}
Repeating this process, that is from $\check{B}_{i,j}$ for some $j\geq 2$, we can find a ball
\begin{align*}
    \check{B}_{i,j+1}& \subset \check{B}_{i,j}\qquad \text{ with } r(\check{B}_{i,j+1})=\beta r(\check{B}_{i,j}) \\
    \check{B}_{i,j+1}& \cap \bigcup_{\widetilde{Q}_{i,k_{i}+\ell_{i}(d+1)+j-4d}\leq q < \widetilde{Q}_{i,k_{i}+\ell_{i}(d+1)+j+1-4d}}\bigcup_{\bp\in \Z^{d}}B\left( \frac{\bp}{q}, \varepsilon q^{-\frac{d+1}{d}}\right)=\emptyset.
\end{align*}
Note the first iteration is exceptional in the multiplicative constant $9$ appearing in the radius bound, later iterations do not include the constant $9$. Repeat up to some $u_{i}\in \N$, arriving at a ball $\check{B}_{i,u_{i}}$ such that
\begin{align*}
    r(\check{B}_{i,u_{i}})&=\beta^{R_{i}}r(B_{N_{i}}) \qquad \text{(Choose $u_{i}$ such that $u_{i}+k_{i}+\ell_{i}(d+1)+1-4d=R_{i}$)},\\
    \check{B}_{i,u_{i}}&\subset \check{B}_{i,1} \overset{\text{\eqref{eq: check conditions 2}}}{\subset} \widehat{B}_{i,t_{i}}(\omega_{i,t_{i}}) \overset{\text{\eqref{lem: containment condition}}}{\subset} \widehat{B}_{i,1}(\omega_{i,1}) \overset{\text{\eqref{eq: Bhat 1 contained in Btilde}}}{\subset} \widetilde{B}_{i,k_{i}+\ell_{i}} \overset{\text{\eqref{eq: shrink still contains ball}}}{\subset} \widetilde{B}_{i,k_{i}} \overset{\text{\eqref{eq:contained in big ball}}}{\subset} B_{N_{i}},
\end{align*}
and
\begin{equation} \label{eq: last stages of avoiding rationals}
    \check{B}_{i,u_{i}}\cap \bigcup_{\widetilde{Q}_{i,k_{i}+\ell_{i}(d+1)+1-4d}\leq q < \widetilde{Q}_{i,R_{i}-1}} \bigcup_{\bp\in \Z^{d}}B\left(\tfrac{\bp}{q}, \varepsilon q^{-\frac{d+1}{d}}\right)=\emptyset.
\end{equation}
Note our choice of $u_{i}$ can be seen to be positive by \eqref{eq: ell size explicit}, $0\leq k_{i} \leq 2(d+1)$, and \eqref{eq: Ri parameter}.
Alice legally picks $A_{N_{i}}=\check{B}_{i,u_{i}}$ and so Bobs $N_{i}+1$th ball is $B_{N_{i}+1}=A_{N_{i}}$. Observe that
\begin{align} \label{eq: exceptional turn avoid rationals a}
    B_{N_{i}+1} \cap \underset{\widetilde{Q}_{i,k_{i}+\ell_{i}(d+1)+2-4d}\leq q < \widetilde{Q}_{i,R_{i}-1} \eqref{eq: last stages of avoiding rationals} }{\underset{\widetilde{Q}_{i,k_{i}+\ell_{i}+1+t_{i}\frac{d}{d-1}} \leq q< \widetilde{Q}_{i,k_{i}+\ell_{i}(d+1)+2-4d} \eqref{eq: rational miss 1}}{\underset{\widetilde{Q}_{i,k_{i}+\ell_{i}+t_{i}-1}\leq q <\widetilde{Q}_{i,k_{i}+\ell_{i}+1+t_{i}\frac{d}{d-1}}\eqref{eq: check conditions 3}}{\underset{Q_{N_{i-1}}\leq q<\widetilde{Q}_{i,k_{i}+\ell_{i}+t_{i}-1}\eqref{lem: avoid rationals condition}}{\bigcup}}}} \bigcup_{\bp \in \Z^{d}} B\left(\tfrac{\bp}{q}, \varepsilon q^{-\tfrac{d+1}{d}} \right)=\emptyset.
\end{align}
Moreover, since $B_{N_{i}+1}\subset \widehat{B}_{i,t_{i}}(\omega_{i,t_{i}})$ for some $\omega_{i,t_{i}}\in S_{i}$, so $$\|\omega_{i,t_{i}}-\tfrac{\bp_{i,t_{i}}}{q_{i,t_{i}}}\|_{2}=\varepsilon q_{i,t_{i}}^{-\tfrac{d+1}{d}} \quad \text{for some} \quad \tfrac{\bp_{i,t_{i}}}{q_{i,t_i}}\in \cQ_{i},$$
we have that
\begin{equation*}
B_{N_{i}+1} \subset B\left(\tfrac{\bp_{i,t_{i}}}{q_{i,t_{i}}}, \varepsilon q_{i,t_{i}}^{-\tfrac{d+1}{d}}+r(\widehat{B}_{i,t_{i}}(\omega_{i,t_{i}})) \right) \setminus B\left(\tfrac{\bp_{i,t_{i}}}{q_{i,t_{i}}}, \varepsilon q_{i,t_{i}}^{-\tfrac{d+1}{d}} \right)
\end{equation*}
with
\begin{align*}
    r(\widehat{B}_{i,t_{i}}(\omega_{i,t_{i}})) &=\beta^{k_{i}+\ell_{i}+1+t_{i}\tfrac{d}{d-1}}r(B_{N_{i}})\overset{\text{\eqref{eq: ell size}}}{\leq} \beta^{\ell_{i}d -4d-2} \varepsilon \widetilde{Q}_{i,k_{i}+1}^{-\tfrac{d+1}{d}} \leq \beta^{\ell_{i}d-4d-2} \varepsilon q_{i,t_{i}}^{-\tfrac{d+1}{d}}\\
    &\leq \beta^{-6d-2}2^{d}(d!\lambda_{d}(B(0,1))) \varepsilon^{d+1} q_{i,t_{i}}^{-\tfrac{d+1}{d}},
\end{align*}
using
\begin{equation*}
    \widetilde{Q}_{i,k_{i}-1}^{-\tfrac{d+1}{d}}=2(d!\lambda_{d}(B(0,1)))^{\frac{1}{d}}\beta^{k_{i}-1}r(B_{N_{i}}).
\end{equation*}
and \eqref{eq: ell size} to see that
\begin{equation*}
    \beta^{\ell_{i}+2} \leq 2(d!\lambda_{d}(B(0,1)))^{\frac{1}{d}}\varepsilon.
\end{equation*}
Therefore
\begin{equation} \label{eq: exceptional turn close to rational}
    B_{N_{i}+1}\subset B\left(\tfrac{\bp_{i,t_{i}}}{q_{i,t_{i}}}, \left(1+\beta^{-6d-2}2^{d}(d!\lambda_{d}(B(0,1))) \varepsilon^{d}\right) \varepsilon q_{i,t_{i}}^{-\tfrac{d+1}{d}} \right) \setminus B\left(\tfrac{\bp_{i,t_{i}}}{q_{i,t_{i}}}, \varepsilon q_{i,t_{i}}^{-\tfrac{d+1}{d}} \right)
\end{equation}

We now show that our assumptions \eqref{eq: assumption 1} and \eqref{eq: assumption 2} at the start of the exceptional turn were valid. Note that \eqref{eq: assumption 1} is satisfied by Alice strategy in her regular turns. It remains to show \eqref{eq: assumption 2} is satisfied. This is achieved by noting that \eqref{eq: exceptional turn avoid rationals a} shows
\begin{equation} \label{eq: exceptional turn avoid rationals}
    B_{N_{i}+1}\cap \bigcup_{Q_{N_{i-1}}\leq q<\widetilde{Q}_{i,R-1}} \bigcup_{\bp\in\Z^{d}}B\left(\tfrac{\bp}{q}, \varepsilon q^{-\frac{d+1}{d}}\right)=\emptyset.
\end{equation}
For $1\leq q < Q_{N_{i-1}}$, using \eqref{eq: assumption 1} and \eqref{eq: assumption 2}, and for $Q_{N_{i-1}}\leq q< \widetilde{Q}_{i,R_{i}-1}$, using the monotonicity of $q\mapsto \varepsilon q^{-\frac{d+1}{d}}$ it can be seen that
\begin{equation*}
    \delta_{i}=\min\left\{ \delta_{i-1}, r(B_{0}), \varepsilon \widetilde{Q}_{i,R_{i}-1}^{-\tfrac{d+1}{d}}\right\}
\end{equation*}
is a suitable choice. Note $\delta_{i}$ is independent of the choice of $N_{i+1}$.
By the legality of Bobs moves we will have that $B_{N_{i+1}}\subset B_{N_{i}+1}$, and so
\begin{equation*}
    B_{N_{i+1}}\cap \bigcup_{1\leq q<\widetilde{Q}_{i,R_i -1}} \bigcup_{\bp\in\Z^{d}}B\left(\tfrac{\bp}{q}, \delta_{i}\right)=\emptyset,
\end{equation*}
thus \eqref{eq: assumption 2} is satisfied.

\subsection{The desired outcome}

It remains to show that the outcome of the game, $\bx_{\infty}$ satisfies 
\begin{equation*}
    \bx_{\infty} \in \left\{ \bx \in \R^{d}: \varepsilon \leq \Theta_{d}(\bx) \leq  \varepsilon\left(1+\beta^{-6d-2}2^{d}d!\lambda_{d}(B(0,1)) \varepsilon^{d}\right) \right\}.
\end{equation*}
Observe that \eqref{eq: regular turn avoid rationals} and \eqref{eq: exceptional turn avoid rationals} imply that at every turn, whether regular or exceptional, we have
\begin{equation*}
    B_{n+1} \cap \bigcup_{Q_{n-1}\leq q< Q_{n}}\bigcup_{\bp\in \Z^{d}}B\left( \tfrac{\bp}{q}, \varepsilon q^{-\tfrac{d+1}{d}}\right)=\emptyset,
\end{equation*}
and since $\bx_{\infty} \in \lim_{n \to \infty} \bigcap_{i\leq n} B_{i}$ we have that $\Theta_{d}(\bx_{\infty})\geq \varepsilon$. By \eqref{eq: exceptional turn close to rational} for every $\frac{\bp}{q}\in\left(\frac{\bp_{i,t_{i}}}{q_{i,t_{i}}}\right)_{i\in \N}$ we have 
\begin{equation*}
    \bx_{\infty} \in B\left(\tfrac{\bp}{q}, \left(1+\beta^{-6d-2}2^{d}(d!\lambda_{d}(B(0,1))) \varepsilon^{d}\right) \varepsilon q^{-\tfrac{d+1}{d}} \right) \setminus B\left(\tfrac{\bp}{q}, \varepsilon q^{-\tfrac{d+1}{d}} \right)
\end{equation*}
and so
\begin{equation*}
    \Theta_{d}(\bx_{\infty})\leq \varepsilon\left(1+\beta^{-6d-2}2^{d}(d!\lambda_{d}(B(0,1))) \varepsilon^{d}\right).
\end{equation*}
Hence the proof is complete.

\section{Further remarks}
As a further remark we mention the Lagrange spectrum for matrices and an easy result that can be deduced. For more details on this spectrum see the recent preprint \cite{Kleinbock}.
\subsection{Lagrange spectrum for matrices}
One can readily define a multidimensional version of the Lagrange spectrum. That is, given norms $\|\cdot\|_{1}$ on $\R^{n}$ and $\|\cdot\|_{2}$ on $\R^{m}$ and a matrix $\bX\in \R^{m\times n}$ define
\begin{equation*}
    \Theta_{m,n}(\bX, \|\cdot\|_{1}, \|\cdot\|_{2}):=\underset{\bq \in \Z^{n}}{\liminf_{\bq \to \infty}}\|\bq\|_{1}^{\frac{n}{m}} \min_{\bp\in \Z^{m}}\|\bX \bq - \bp\|_{2}.
\end{equation*}
Again, it is well known via Minkowski Convex body theorem that there exists $D_{m,n}$ dependent on norms $\|\cdot\|_{1}, \|\cdot\|_{2}$ such that $\Theta_{m,n}(\bX, \|\cdot\|_{1}, \|\cdot\|_{2})\leq D_{m,n}$ for all $\bX \in \R^{m\times n}$. Define
\begin{equation*}
    \cL_{m,n}(\|\cdot\|_{1}, \|\cdot\|_{2}):=\left\{ \Theta_{m,n}(\bX, \|\cdot\|_{1}, \|\cdot\|_{2}) : \bX \in \R^{m\times n}\right\}\subseteq [0,D_{m,n}].
\end{equation*}
For $n=1$ we return to the simultaneous version. Very little is known about the properties of these sets other than the recent work of Kleinbock \cite{Kleinbock} showing these sets are dense on the intervals $[0,\sup_{\bX\in \R^{m\times n}} \Theta_{m,n}(\bX, \|\cdot\|_{1}, \|\cdot\|_{2})]$. For general pairs $(m,n)$ it remains unknown whether these sets are uncountable. We give the following elementary observation, that does not appear to have been previously recorded.
\begin{theorem}\label{thm: matrices case}
   We have that
    \begin{equation*}
        \cL_{n,n}(\|\cdot\|_{\infty}, \|\cdot\|_{\infty})\supseteq \cL_{1},
    \end{equation*}
    and so $\cL_{n,n}(\|\cdot\|_{\infty}, \|\cdot\|_{\infty})$ contains a Hall's ray. Moreover,
    \begin{equation*}
        \liminf_{\varepsilon\to 0}\dimh \left\{\bX\in \R^{n\times n}: \Theta_{n,n}(\bX, \|\cdot\|_{\infty}, \|\cdot\|_{\infty})=\varepsilon \right\}\geq n^2-2n+2 \, .
    \end{equation*}
\end{theorem}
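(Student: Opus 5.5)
Take block structure: for a real $x$ and a matrix $\bX$ that is essentially $x$ times the identity, perturbed so as to become "generic". The natural elementary observation is this. For $\bX = x I_n$ and $\bq\in\Z^n$ with the sup norm, we have $\min_{\bp}\|\bX\bq-\bp\|_\infty = \max_k \|q_k x\|$ and $\|\bq\|_\infty^{n/n}=\max_k|q_k|$. Taking $\bq = (q,0,\dots,0)$ gives $\Theta_{n,n}(xI)\le \Theta_1(x)$. Conversely, for any $\bq$ pick $k$ with $|q_k|=\|\bq\|_\infty$. Then
\begin{equation*}
\|\bq\|_\infty\max_j\|q_jx\|\ge |q_k|\,\|q_kx\|,
\end{equation*}
and as $\bq\to\infty$ we have $|q_k|\to\infty$, so $\Theta_{n,n}(xI)\ge\Theta_1(x)$. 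Hence $\Theta_{n,n}(xI)=\Theta_1(x)$, which gives $\cL_{n,n}\supseteq\cL_1$, and Hall's ray follows from \cite{Hall1947}.

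For the dimension statement, I would enlarge the family. Consider matrices $\bX = \mathrm{diag}(x,\ldots)$, or more generally matrices whose first row and column are $(x,0,\dots,0)$ and whose remaining $(n-1)\times(n-1)$ block $Y$ is arbitrary subject to a badness condition. Counting parameters: $1+(n-1)^2 = n^2-2n+2$, which matches the target exactly. So the plan is to show that for $\bX=\begin{pmatrix}x&0\\0&Y\end{pmatrix}$ we have $\Theta_{n,n}(\bX)=\Theta_1(x)$ whenever $Y$ is "sufficiently badly approximable", meaning $\Theta_{n-1,n-1}(Y)\ge \Theta_1(x)$ or larger. Then I would bound the dimension below by $\dimh L_1(\varepsilon)+\dimh\{Y:\Theta(Y)\ge c\}$, using a product-set lower bound ($\dimh(A\times B)\ge\dimh A+\dimh B$). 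I would then let $\varepsilon\to0$, using Moreira's $\dimh L_1(\varepsilon)\to1$ \cite{Moreira2018} together with the fact that matrix $\Bad(c)$ has dimension tending to $(n-1)^2$ as $c\to0$ (Schmidt's winning results for badly approximable matrices). Since we need $\Theta(Y)\ge\varepsilon$ with $\varepsilon\to0$, this is exactly the right regime.

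The key step, and the main obstacle, is proving $\Theta_{n,n}(\bX)=\Theta_1(x)$ for the block matrix. The upper bound again comes from $\bq=(q,0,\ldots,0)$. For the lower bound, write $\bq=(q_1,\bq')$. Then
\begin{equation*}
\|\bq\|_\infty\,\mathrm{dist}(\bX\bq,\Z^n)\ge \max(|q_1|,\|\bq'\|)\max(\|q_1x\|,\mathrm{dist}(Y\bq',\Z^{n-1})).
\end{equation*}
If $|q_1|\ge\|\bq'\|$, this is at least $|q_1|\|q_1x\|$. Otherwise it is at least $\|\bq'\|\,\mathrm{dist}(Y\bq',\Z^{n-1})$, which is asymptotically at least $\Theta_{n-1,n-1}(Y)$ once $\bq'\to\infty$. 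When $\bq'$ stays bounded, $q_1\to\infty$ and we are in the first case. Hence
\begin{equation*}
\Theta_{n,n}(\bX)\ge\min(\Theta_1(x),\Theta_{n-1,n-1}(Y)) = \Theta_1(x)
\end{equation*}
provided $\Theta(Y)\ge\varepsilon$. The exponent $n/m=1$ for square matrices is what makes the norms factor cleanly; this is why squareness is needed. One should double-check the edge case $n=1$, which is trivial, and use Hausdorff-dimension product inequalities with care (Marstrand's inequality suffices for the lower bound).
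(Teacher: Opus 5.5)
Your proposal is correct and is essentially the paper's argument. The paper likewise proves $\Theta_{n,n}(\mathrm{diag}(\bX_1,\ldots,\bX_k))=\min_i\Theta_{n_i,n_i}(\bX_i)$, picking the block that carries $\|\bq\|_\infty$ for the lower bound and zero-padding an optimal sequence for the upper bound. It applies this to $\mathrm{diag}(x,\ldots,x)$, and then bounds the dimension of $\{\mathrm{diag}(x,Y):\Theta_1(x)=\varepsilon,\ \Theta_{n-1,n-1}(Y)>\varepsilon\}$ via the product inequality and Moreira's result $\dimh L_1(\varepsilon)\to 1$.

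The one difference is the citation for the badly approximable block: the paper uses Simmons' asymptotic for $\dimh\Bad(\varepsilon)$, while you invoke Schmidt's winning results. That is fine, provided you use the quantitative form of Schmidt's results, which gives $\dimh\{Y:\Theta_{n-1,n-1}(Y)\ge\varepsilon\}\to(n-1)^2$ as $\varepsilon\to0$.
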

\begin{proof}
For ease of notation let $\Theta_{n,n}(\bX,\|\cdot\|_{\infty}, \|\cdot\|_{\infty})=\Theta_{n,n}(\bx)$. This can be seen by noting that for matrices $\bX_{i}\in \R^{n_{i}\times n_{i}}$ with $\sum_{i=1}^{k}n_{i}=n$ the matrix 
\begin{equation*}
    \bX=diag(\bX_{1}, \ldots, \bX_{k})
\end{equation*}
satisfies
\begin{equation}\label{eq: splitting matrices}
    \Theta_{n,n}(\bX)=\min_{1\leq i \leq k}\Theta_{n_{i},n_{i}}(\bX_{i}).
\end{equation}
To see why, suppose that $\min_{1\leq i \leq k}\Theta_{n_{i},n_{i}}(\bX_{i})=C_{1}$.
Then, for any $\eta>0$ and all $Q>Q_{\eta}$ vectors $\bq_{i}\in \Z^{n_{i}}$ with $\|\bq\|_{\infty}\geq Q$ we have that
\begin{equation*}
    |\bq_{i}\|_{\infty}\min_{\bp_{i}\in \Z^{n_{i}}}\|\bX_{i} \bq_{i}-\bp_{i}\|_{\infty}\geq C_{1}-\eta.
\end{equation*}
Given any $\bq=(\bq_{1},\ldots, \bq_{k})\in \Z^{n}$ choose $1\leq j \leq k$ such that $\|\bq\|_{\infty}=\|\bq_{j}\|_{\infty}$. Then 
\begin{align*}
    \|\bq\|_{\infty}\min_{\bp\in \Z^{n}}\|\bX \bq-\bp\|_{\infty}&=\|\bq\|_{\infty}\max_{1\leq i \leq k}\min_{\bp_{i}\in \Z^{n_{i}}}\|\bX_{i}\bq_{i}-\bp_{i}\|\\
    &\geq  \|\bq_{j}\|_{\infty}\min_{\bp_{j}\in \Z^{n_{j}}}\|\bX_{j}\bq_{j}-\bp_{j}\|_{\infty}\geq C_{1}-\eta.
\end{align*}
Since this was true for any $\eta>0$ we obtain the lower bound of \eqref{eq: splitting matrices}. For the corresponding upper bound suppose without loss of generality that $\min_{1\leq i \leq k}\Theta_{n_{i},n_{i}}(\bX_{i})=\Theta_{n_{1},n_{1}}(\bX_{1})= C_1$. Let the sequence $(\bp_{i,1},\bq_{i,1})_{i \in \N}$ in $\Z^{n_{1}}\times \Z^{n_{1}}$ be a sequence attaining
\begin{equation*}
    \lim_{i\to\infty}\left(\|\bq_{i,1}\|_{\infty}\|\bX_{1}\bq_{i,1}-\bp_{i,1}\|_{\infty}\right)=C_{1}.
\end{equation*}
Then taking the sequence of pairs
\begin{equation*}
    (\bp_{i},\bq_{i})=((\bp_{i,1}, 0, \ldots, 0), (\bq_{i,1},0,\ldots, 0))\in \Z^{n}\times \Z^{n}
\end{equation*}
one readily sees that $\lim_{i \to \infty}\|\bq_{i}\|_{\infty}\|\bX \bq_{i}-\bp_{i}\|_{\infty}=C_{1}$, thus $\Theta_{n,n}(\bX)\leq C_{1}$ obtaining our corresponding upper bound. To see the first part of the Theorem one can take $\bX=diag(x,\ldots, x)$ for some $x\in \R$ satisfying $\Theta_{1}(x)=c\in \cL_{1}$. Then \eqref{eq: splitting matrices} immediately gives $\Theta_{n,n}(\bX)=c\in \cL_{1}$. For the second part of the theorem, we have by \cite{Simmons2018}, that
\begin{equation*}
    \dimh \{\bX\in \R^{n\times n}: \Theta_{n,n}(\bX)>\varepsilon\}=n^{2}-V_{n,\|\cdot\|_{\infty}} \varepsilon^{n}+o(\varepsilon^{n}),
\end{equation*}
and by Moreira \cite{Moreira2018} that $\lim_{\varepsilon\to 0}\dimh \{x \in \R : \Theta_{1}(x)=\varepsilon\}=1$. Take the set
\begin{equation*}
    \Lambda_{n}(\varepsilon):=\left\{diag(x, \bX_{1})\in \R^{n\times n}: \begin{array}{c}x \in \{x \in \R: \Theta_{1}(x)=\varepsilon\} \\ \bX_{1}\in \{\bX\in \R^{(n-1)\times (n-1)}: \Theta_{n-1,n-1}(\bX)>\varepsilon\}
    \end{array}\right\}.
\end{equation*}
Then for any $\bX\in \Lambda(\varepsilon)$ we have $\Theta_{n,n}(\bX)=\varepsilon$. Since the block diagonal map $(x, \bX_{1})\mapsto diag(x, \bX_{1})$ is bi-Lipschitz and by a well-known result of Cartesian products of fractal sets \cite[\S 7]{Falconer2013} we have that
\begin{align*}
    \liminf_{\varepsilon \to 0} \dimh \Lambda_{n}(\varepsilon) &\geq \lim_{\varepsilon\to 0}\Bigg( \dimh \{x \in \R : \Theta_{1}(x)=\varepsilon\} \\
      & \qquad + \dimh \left\{\bX\in \R^{(n-1)\times (n-1)}: \Theta_{n-1,n-1}(\bX)>\varepsilon\right\} \Bigg)\\
    &\geq 1 +(n-1)^{2}
\end{align*}
%completing the proof.
\end{proof}
As a final remark we have the following simple consequence of Theorem~\ref{main}.

\begin{corollary} \label{cor: levelsets or uncountable}
    At least one of the following is true:
    \begin{enumerate}[i)]
    \item $\cL_{d}(\|\cdot\|_{2})$ is uncountable.
    \item
    
        $\underset{\varepsilon\to 0}{\limsup} \dimh \left\{ \bx \in \R^{d}: \Theta_{d}(\bx,\|\cdot\|_{2})=\varepsilon \right\}=d$.
    
    \end{enumerate}
\end{corollary}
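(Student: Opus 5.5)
We argue by contradiction: suppose (i) fails, so $\cL_{d}(\|\cdot\|_{2})$ is countable. We will deduce (ii) from the last statement of Theorem~\ref{main} together with countable stability of Hausdorff dimension.

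First fix a function $\Delta$ with $\Delta(r)\to\infty$ as $r\to 0$; for instance $\Delta(r)=\log(1/r)$ for small $r$. For $\varepsilon>0$ small, set
\begin{equation*}
E(\varepsilon):=\left\{ \bx \in \R^{d}: \varepsilon \leq \Theta_{d}(\bx, \|\cdot\|_{2}) \leq \varepsilon(1+\Delta(\varepsilon) \varepsilon^{d})\right\},
\end{equation*}
and write $J(\varepsilon):=[\varepsilon,\varepsilon(1+\Delta(\varepsilon)\varepsilon^{d})]$. By Theorem~\ref{main}, $\dimh E(\varepsilon)\to d$ as $\varepsilon\to 0$. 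We may also assume $\Delta(\varepsilon)\varepsilon^{d}\to 0$, so that $J(\varepsilon)\subset[\varepsilon,2\varepsilon]$, although this is not actually needed.

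The key decomposition is
\begin{equation*}
E(\varepsilon)=\bigcup_{\theta\in \cL_{d}(\|\cdot\|_{2})\cap J(\varepsilon)} L_{d}(\theta,\|\cdot\|_{2}).
\end{equation*}
Under the countability assumption this is a countable union. By countable stability of Hausdorff dimension,
\begin{equation*}
\dimh E(\varepsilon)=\sup_{\theta\in \cL_{d}\cap J(\varepsilon)} \dimh L_{d}(\theta,\|\cdot\|_{2}).
\end{equation*}
So for each small $\varepsilon$ (for instance $\varepsilon=1/m$) we can pick $\theta_{m}\in J(1/m)$ with
\begin{equation*}
\dimh L_{d}(\theta_{m},\|\cdot\|_{2})\geq \dimh E(1/m)-1/m.
\end{equation*}
Note that $\theta_{m}\leq 2/m\to 0$, and the right-hand side tends to $d$. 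Hence $\limsup_{\theta\to0}\dimh L_{d}(\theta,\|\cdot\|_{2})\geq d$.

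The reverse inequality $\leq d$ is trivial, since each level set is a subset of $\R^{d}$. Together these give (ii).

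The only delicate point is the interpretation of $\limsup_{\varepsilon\to0}$ in (ii). Under countability the level sets are empty off a countable set, so the $\limsup$ should be read as taken over $\varepsilon$ ranging through $\cL_{d}$, or equivalently with $\dimh\emptyset$ set to $0$. In either reading the sequence $\theta_{m}\to0$ witnesses the value $d$.

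One should also check that $E(\varepsilon)\neq\emptyset$, so that the supremum is over a nonempty set. This is supplied by the non-emptiness part of Theorem~\ref{main}, or simply by $\dimh E(\varepsilon)>0$. Beyond this, no step is a real obstacle; the whole argument rests on countable stability.
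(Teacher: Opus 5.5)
Your proof is correct and is essentially the paper's argument: assuming $\cL_{d}(\|\cdot\|_{2})$ is countable, you use the last part of Theorem~\ref{main} with a logarithmic $\Delta$ (the paper takes $\Delta(\varepsilon)=\log_{2}(\varepsilon^{-1})$ along $\varepsilon_{i}=2^{-i}$). You write the window set as a countable union of level sets, apply countable stability, and pick near-maximal level sets whose dimensions tend to $d$. Your extra remarks are details the paper leaves implicit: the trivial upper bound $d$, the non-emptiness of the window, and reading the $\limsup$ over $\cL_{d}(\|\cdot\|_{2})$.
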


\begin{proof}
    Suppose that $\cL_{d}(\|\cdot\|_{2})$ is countable, otherwise we are done. Take the sequence $\varepsilon_{i}=2^{-i}$ and function $\Delta(\varepsilon)=\log_2 (\varepsilon^{-1})$ and let $I_{i}:=[\varepsilon_{i}, \varepsilon_{i}+i\varepsilon_{i}^{d+1}]$, for $i\in \N$. By Theorem~\ref{main}
    \begin{equation*}
        \dimh \left\{\bx\in \R^{d}: \Theta_{d}(\bx,\|\cdot\|_{2})\in I_{i} \right\} \to d \quad \text{ as} \quad i\to\infty.
    \end{equation*}
    Using the countable stability of Hausdorff dimension, and our assumption that $\cL_{d}(\|\cdot\|_{2})$ is countable, we have that
    \begin{equation*}
        \dimh \left\{\bx\in \R^{d}: \Theta_{d}(\bx,\|\cdot\|_{2})\in I_{i} \right\}=\sup_{\varepsilon \in \cL_{d}(\|\cdot\|_{2})\cap I_{i}}\dimh \left\{\bx\in \R^{d}: \Theta_{d}(\bx,\|\cdot\|_{2})=\varepsilon \right\}.
    \end{equation*}
    In particular, there exists $\tilde{\varepsilon}_{i}\in \cL_{d}(\|\cdot\|_{2})\cap I_{i}$ such that
    \begin{equation*}
        \dimh \left\{\bx\in \R^{d}: \Theta_{d}(\bx,\|\cdot\|_{2})=\tilde{\varepsilon}_{i} \right\}>\dimh \left\{\bx\in \R^{d}: \Theta_{d}(\bx,\|\cdot\|_{2})\in I_{i} \right\}-2^{-i},
    \end{equation*}
    and so, by Theorem~\ref{main} we have
    \begin{equation*}
        \lim_{i\to\infty} \dimh \left\{\bx\in \R^{d}: \Theta_{d}(\bx,\|\cdot\|_{2})=\tilde{\varepsilon}_{i} \right\}=d.
    \end{equation*}
\end{proof}
We remark here that this could similarly be proven via \cite{Simmons2018}. The key ingredient is that the window has positive dimension. 

\end{document}